\numberwithin{equation}{section}
\DeclareMathAlphabet{\mathpzc}{OT1}{pzc}{m}{it}
\newtheorem{definition}{Definition}[section]
\newtheorem{theorem}[definition]{Theorem}
\newtheorem{proposition}[definition]{Proposition}
\newtheorem{lemma}[definition]{Lemma}
\newtheorem{corollary}[definition]{Corollary}
\theoremstyle{remark}
\newtheorem{example}[definition]{Example}
\newtheorem{remark}[definition]{Remark}
\newtheorem{notation}[definition]{Notation}
\newtheorem{construction}[definition]{Construction}
\newtheorem{problem}[definition]{Problem}
\renewcommand{\L}{\mathfrak}
\newtheorem{no}[definition]{\S}
\newcommand{\cB}{{\mathcal B}}
\newcommand{\CC}{{\mathbb C}}
\newcommand{\KK}{{\mathbb K}}
\newcommand{\LL}{{\mathbb L}}
\newcommand{\NN}{{\mathbb N}}
\newcommand{\QQ}{{\mathbb Q}}
\newcommand{\RR}{{\mathbb R}}
\newcommand{\ZZ}{{\mathbb Z}}
\newcommand{\Aa}{{\mathcal A}}
\newcommand{\Dd}{{\mathcal D}}
\newcommand{\Ss}{{\mathcal S}}
\newcommand{\Pp}{{\mathcal P}}
\newcommand{\supp}{{\operatorname{supp}}}			%support
\newcommand{\qand}{\quad \text{and} \quad}
\renewcommand{\mid}{:}
\renewcommand{\subset}{\subseteq}
\newcommand{\Hmm}[1]{\leavevmode{\marginpar{\tiny%
$\hbox to 0mm{\hspace*{-0.5mm}$\leftarrow$\hss}%
\vcenter{\vrule depth 0.1mm height 0.1mm width \the\marginparwidth}%
\hbox to 0mm{\hss$\rightarrow$\hspace*{-0.5mm}}$\\\relax\raggedright #1}}}
\begin{document}
\title{Symbolic substitution systems beyond abelian groups}
\author{Siegfried Beckus, Tobias Hartnick, Felix Pogorzelski}

\address{Institut für Mathematik\\
Universit\"at Potsdam\\
Potsdam, Germany}
\email{beckus@uni-potsdam.de}

\address{Institut f\"ur Algebra und Geometrie\\
KIT\\
Karlsruhe, Germany}
\email{tobias.hartnick@kit.edu}

\address{Mathematisches Institut\\
Universit\"at Leipzig\\
Leipzig, Germany}
\email{felix.pogorzelski@math.uni-leipzig.de}

\begin{abstract} In this article we construct the first examples of strongly aperiodic linearly repetitive Delone sets in non-abelian Lie groups by means of symbolic substitutions. In particular, we find such sets in all $2$-step nilpotent Lie groups with rational structure constants such as the Heisenberg group. More generally, we consider the class of $1$-connected nilpotent Lie groups whose Lie algebras admit a rational form and a derivation with positive eigenvalues. Any group in this class admits a lattice which is invariant under a natural family of dilations, and this allows us to construct primitive non-periodic symbolic substitutions. We show that, as in the abelian case, the associated subshift (and hence the induced Delone dynamical system) is minimal, uniquely ergodic and weakly aperiodic and consists of linearly repetitive configurations. In the $2$-step nilpotent case, it is even strongly aperiodic.
\end{abstract}

%%%%%%%%%%%%%%%%%%%%%%%%%%%%%%%%%%%%%%%%%%%%%%%%%%%%%%%%%%%%%%%%%%%%

\maketitle

%\tableofcontents

\section{Introduction}

\subsection{Motivation}
The theory of aperiodic order is an important source of examples of dynamical systems over locally compact abelian groups: If $\Lambda$ is an aperiodic Delone set in a locally compact abelian group $A$ and $\Omega_\Lambda$ denotes its orbit closure in the space of all such Delone sets (with respect to the Chabauty--Fell topology), then  $A \curvearrowright \Omega_\Lambda$ is a topological dynamical system. In principle, such systems can be studied for arbitrary Delone sets $\Lambda$, but in practice they have been studied mostly for three (overlapping) classes of Delone sets: 
\begin{itemize}
\item aperiodic Meyer sets, i.e.\ Delone sets arising from cut-and-project constructions;
\item aperiodic Delone sets arising from symbolic substitutions inside a lattice in $A$;
\item aperiodic Delone sets arising from geometric substitutions.
 \end{itemize}
The present article continues a general program initiated in \cite{BjHa18,BjHaPo18} and developed further in \cite{FishExtensionsOfSchreiber, BjHStulemeijer, BP18, BjHAnalyticProperties, Machado1, CordesHTonic, Hrushovski, BjHaPoII, BjHaPoIII, Machado2, BjHa3, Machado3, BHP25-LR} which aims to expand the theory of aperiodic order to non-abelian locally compact groups and to study the resulting dynamical systems. 
 Thanks to the work cited above there is now a rich theory of Meyer sets in non-abelian groups; the dynamical properties of the resulting systems can sometimes be surprisingly different from the abelian case. The goal of the present article is to extend key aspects of the theory of symbolic substitution dynamical systems associated with lattices in abelian groups \cite{Que87, Sol97,Sol98,AnPu98,Dur00,Fogg02,DaLe06,BaakeGrimm13,KeLe13,BarOl14} to symbolic substitution systems associated with lattices in certain non-abelian group. Using our approach we construct the first examples of \emph{strongly aperiodic linearly repetitive Delone sets} in non-abelian locally compact groups. (Note that, in contrast to the abelian case, there is so far no construction of such Delone sets using cut-and-project methods for non-abelian locally compact groups.) Our construction applies to a large class of lattices in nilpotent Lie groups which contains hundreds of examples even in relatively small dimensions (say, $\leq 10$); see \cite{SolomonSmilansky} and  \cite{BL21,BL23} concerning recent work on substitutions in spaces associated with other non-abelian (semi-)groups.
 
\subsection{The case of the Heisenberg group}
The smallest non-abelian example to illustrate our results is the $3$-dimensional Heisenberg group $\mathbb{H}_3(\mathbb{R})$ which we think of as $\mathbb{R}^3$ with non-commutative multiplication given by\footnote{The factor $\tfrac{1}{2}$ can be replaced by any non-zero real number without changing the isomorpism type of the group; our normalization follows a convention explained in more details below.}

\[
(x,y,z)\ast(x', y', z') = (x+x', y+y', z+z' + \tfrac{1}{2} (xy' - yx')).
\]
We equip $\mathbb{H}_3(\mathbb{R})$ with the Cygan--Korányi metric
\[
d((x,y,z),(x',y',z'))= \sqrt[4]{(|x'-x|^2 + |y'-y|^2)^2 + |z'-z-\tfrac{1}{2} (xy' - yx')|^2}.
\]
The points with coordinates in the ring $2\mathbb{Z}$ then form a lattice $\Gamma < \mathbb{H}_3(\mathbb{R})$, and for this specific lattice our main results (Theorems~\ref{Thm-ExPrimNonPerSubst}, \ref{MainTheorem}, \ref{Thm-strongAP} and Corollary~\ref{Cor-exLR_Del} below) specialize as follows; see \S \ref{Par-BasTerm} for terminology:
\begin{theorem}
\label{Thm-HeisenbergMain}
 For every finite alphabet $\Aa$ with $|\Aa|\geq 2$ there exists a linearly repetitive configuration $\omega \in \Aa^{\Gamma}$ whose $\Gamma$-orbit closure is minimal, uniquely ergodic and strongly aperiodic (i.e.\ all elements in the orbit closure have trivial stabilizer).
\end{theorem}
The proof of Theorem \ref{Thm-HeisenbergMain} is constructive and provides very explicit configurations using symbolic substitutions. On the geometric level, we obtain:
\begin{corollary} 
\label{Cor-UniqErg_Min_StrAper-Heisenberg}
There is a linearly repetitive Delone set in the Heisenberg group $\mathbb{H}_3(\mathbb{R})$ whose hull is uniquely ergodic, minimal and strongly aperiodic.
\end{corollary}
While our approach follows the classical construction for lattices in abelian Lie groups, several substantial difficulties arise in the non-abelian setting, and these are already apparent in the case of the Heisenberg group:
\begin{enumerate}
\item Compatibility with $\Gamma$ requires working with dilations of $\mathbb H_3(\mathbb R)$ that scale different directions in the Heisenberg group by distinct factors. 
\item Ensuring covariance of the substitution under the group action causes patches in $\mathbb H_3(\mathbb R)$ to grow much more irregularly under iteration than in $\mathbb R^3$; see Figure~\ref{Fig-Heisenberg_Supp}. This leads to a technical theory of support growth (Section~\ref{SecGrowth}).
\item Establishing strong aperiodicity is considerably more delicate. We introduce a criterion for weak aperiodicity, apparently new even in the abelian setting, which allows the construction of linearly repetitive aperiodic configurations with minimal orbit closure in great generality. In the abelian case, strong aperiodicity then follows directly, whereas in the Heisenberg case additional arguments are required.
\item The proof of unique ergodicity is likewise more involved and relies on a property of the Cygan--Korányi metric known as \emph{exact polynomial growth}, using results from~\cite{BHP25-LR}.
\end{enumerate}

\subsection{An axiomatic framework for substitutions}\label{IntroAxioms}
To state Theorem \ref{Thm-HeisenbergMain} in its natural generality, we introduce an abstract axiomatic framework for symbolic substitutions. As an input we are going to need two kinds of data:

\medskip

\noindent \textbf{Geometric data:} Pick a locally compact, second countable Hausdorff (lcsc) group $G$ and a lattice $\Gamma < G$. We assume that for some left-invariant metric $d$ on $G$ (inducing the given topology) there 
exists a one-parameter group $(D_{\lambda})_{\lambda>0}$ of dilations of $(G,d)$ such that
\[
d(D_{\lambda}(g), D_{\lambda}(h)) = \lambda\, d(g,h)
\quad\text{and}\quad 
D_{\lambda_0}(\Gamma) \subseteq \Gamma
\]
for some $\lambda_0>1$. We also fix a bounded Borel fundamental domain $V \subset G$ for $\Gamma$ containing $e$ and with non-empty interior and refer to $\mathcal D = (G, d, (D_{\lambda})_{\lambda>0}, \Gamma, V)$ as a \emph{dilation datum} over $G$ (see Definition~\ref{Def-DilationDatum}). For some results we will need to assume that the dilation datum satisfies a technical assumption called \emph{exact polynomial growth}, see \S \ref{EPG}.

\medskip

\noindent \textbf{Combinatorial data:} 
Given a dilation datum $\Dd$ as above we choose a finite alphabet $\mathcal A$ and a \emph{substitution rule}
\[
S_0: \mathcal A \to \mathcal A^{D_{\lambda_0}(V) \cap \Gamma},
\]
where the \emph{stretch factor} $\lambda_0 \gg 1$ is chosen such that $D_{\lambda_0}(\Gamma) \subseteq \Gamma$ and assumed to satisfy a technical condition which we call ``$V$-sufficiency'' (see Definition~\ref{Def-stretch-factor}). If $V$ contains an identity neighbourhood, then this condition reduces to $\lambda_0$ being ``sufficiently large'' (Proposition \ref{Prop-SuffCriter:suff_large-old}). We call $\Ss = (\mathcal A, \lambda_0, S_0)$ a \emph{substitution datum} over $\Dd$.

Given a substitution datum $\Ss$ over a dilation datum $\Dd$, the underlying substitution rule admits a unique $D_{\lambda_0}$-equivariant, locally defined extension to a \emph{substitution map} 
$S: \mathcal A^{\Gamma} \to \mathcal A^{\Gamma}$ (cf.\ Proposition~\ref{Prop-SubstitutionAxioms}); the construction is more involved than in the abelian case. 

\subsection{Existence of good substitution data}
Let us call a lcsc group $G$ a \emph{substitution group} if there exists a dilation datum $\Dd$ over $G$. Such groups are quite similar to the example of the Heisenberg example above: They are nilpotent Lie groups, whose underlying 
manifold is diffeomorphic to $\mathbb{R}^n$ for some $n$ and whose multiplication is given by polynomials in suitable local coordinates. We discuss various characterizations of such groups in Section~\ref{SecDilSubExamples} and in more detail in Section~\ref{Sec-Lie}. Given a substitution group one can ask for the existence of dilation and substitution data with special properties, such as \emph{primitivity} (Definition \ref{Def-Primitive}) and \emph{non-periodicity} (Definition~\ref{Def-S_NonPeriodic}).

\begin{theorem} 
\label{Thm-ExPrimNonPerSubst}
Let $G$ be a substitution group and let $\mathcal A$ be a finite alphabet. If $\dim G \geq 2$ and $|\mathcal A| \geq 2$, then there exists a dilation datum $\Dd$ of exact polynomial growth over $G$ and a primitive, non-periodic substitution datum $\mathcal S$ over $\Dd$ with alphabet $\mathcal A$.
\end{theorem}
The proof of Theorem~\ref{Thm-ExPrimNonPerSubst}, as discussed in Section~\ref{Sec-Ex_GoodSubstData} below, is constructive. Using the structure theory of substitution groups one first shows that $G$ admits a special kind of dilation datum $\mathcal{D}$ which we call \emph{split dilation datum} (Definition \ref{Def-SplitDilationDatum}). Given a split substitution datum we can then choose an alphabet $\mathcal A$ (with $|\mathcal A| \geq 2$) and a suitable stretch factor $\lambda_0$, and as long as our substitution rule satisfies a few explicit rules (as stated in Definition~\ref{Def-GoodSubst}) the resulting substitution datum will be primitive and non-periodic. There is thus a huge flexibility in our construction: Not only can we easily construct hundreds of examples of substitution groups, but for each of these groups we can easily construct hundreds of primitive, non-periodic substitution data.

\subsection{The subshift defined by a substitution}
From now on let $S : \mathcal A^\Gamma \to  \mathcal A^\Gamma$ be a substitution map constructed from a substitution datum $\mathcal S$ over a dilation datum $\mathcal D$. 

\begin{no} The lattice $\Gamma$ acts by homeomorphisms on the compact metrizable space $\mathcal A^{\Gamma}$ via
\[
 \gamma \omega(\eta) = \omega(\gamma^{-1}\eta) \quad (\omega \in \mathcal{A}^\Gamma, \gamma,\eta \in \Gamma),
\]
and we refer to a closed $\Gamma$-invariant subset $\Omega \subset \mathcal A^\Gamma$ as a \emph{subshift}. Exactly as in the abelian case, the substitution rule $S$ gives rise to a subshift $\Omega(S) \subset \mathcal A^{\Gamma}$ consisting of those configurations $\omega \in \mathcal A^{\Gamma}$ whose patches can be generated by iterating $S$ on a single letter of $\mathcal A$ (see Construction~\ref{Def-SubstitutionSystem}). We refer to $\Gamma \curvearrowright \Omega(S)$ as the \emph{substitution system} generated by $S$.
\end{no}
\begin{no}
\label{Par-BasTerm} Recall that a subshift $\Omega$ is called \emph{minimal} if every $\Gamma$-orbit in $\Omega$ is dense, and \emph{uniquely ergodic} if it admits a unique $\Gamma$-invariant probability measure. We say that $\Omega$ is \emph{weakly aperiodic} if there exists some $\omega \in \Omega$ such that $\mathrm{Stab}_\Gamma(\omega) := \{ \gamma \in \Gamma: \gamma \omega = \omega \} = \{e\}$; if the latter condition holds even for \emph{all} $\omega\in \Omega$, then we say that $\Omega$ is \emph{strongly aperiodic}.
We say that $\omega \in \mathcal A^\Gamma$ is \emph{linearly repetitive} if there exists $C>0$ such that for all $R > 0$ every patch of $\omega$ of size $R$ occurs in every ball of radius $C\cdot R$ in $G$; here ``size'' is measured with respect to the given invariant metric $d$ on $G$.
\end{no}

\begin{theorem}[Non-abelian primitive symbolic substitution systems]
\label{MainTheorem} 
The substitution system $\Gamma \curvearrowright \Omega(S)$ has the following properties:
\begin{enumerate}[(a)]
\item $\Omega(S) \subset \mathcal A^\Gamma$ is an $S$-invariant subshift.
\item Some power of $S$ has a fixpoint in $\Omega(S)$; in particular $\Omega(S)$ is non-empty.
\end{enumerate}
If the underlying substitution rule $\Ss$ is primitive, then
\begin{enumerate}[(a)]
\setcounter{enumi}{2}
\item every element in $\omega \in \Omega(S)$ is linearly repetitive.
\item the action $\Gamma \curvearrowright \Omega(S)$ is minimal.
\item the action $\Gamma \curvearrowright \Omega(S)$ is uniquely ergodic provided $\Dd$ has exact polynomial growth.
\end{enumerate}
If the underlying substitution rule $\Ss$ is non-periodic, then
\begin{enumerate}[(a)]
\setcounter{enumi}{5}
\item $\Omega(S)$ is weakly aperiodic.
\end{enumerate}
\end{theorem}

Theorem \ref{MainTheorem}  combines Proposition~\ref{Prop-Subshift}, Proposition~\ref{Prop-LegalFixpoints}, Theorem~\ref{Thm-LinearRep}, and Theorem~\ref{Thm-weakAP-ex} below. The fact that $\Omega(S)$ is nonempty relies crucially on the aforementioned sufficiency condition on the stretch factor, which guarantees that iterations of the substitution rule generate arbitrarily large patches (see Proposition~\ref{Prop-SupportGroth}). The proof of unique ergodicity is based on a result of our companion paper \cite{BHP25-LR} which is applicable due to exact polynomial growth of metric balls in $G$;  we refer to Section~\ref{Sec-LR+UniqErg} for details. Parts~(a)--(d) of Theorem~\ref{MainTheorem} are classical when the underlying metric group $(G,d)$ is Euclidean; see e.g.\ \cite{Sol97,Sol98, AnPu98, Dur00, Fogg02, DaLe06, BaakeGrimm13} and references therein. Also Part~(e) has been established in the Euclidean setting; see~\cite{Dur00,DaLe01,Len02b,LaPl03,DaLe06}. Our notion of non-periodic substitutions appears to be new even in the Euclidean case of symbolic substitutions. 
It adapts related ideas from the geometric setting~\cite{Sol97,Sol98,AnPu98}, where injectivity of the substitution ensures aperiodic tilings. 
By contrast, weak aperiodicity for symbolic dynamical systems over a lattice in the Euclidean space is characterized by the existence of proximal pairs~\cite{BarOl14}; see also~\cite{BaakeGrimm13,KeLe13} for higher-dimensional discussions.

It is an interesting question to find conditions under which a given substitution system $\Gamma \curvearrowright \Omega(S)$ is actually \emph{strongly} aperiodic. If $\Gamma$ happens to be abelian, then it is easy to see that every minimal and weakly aperiodic subshift is automatically strongly aperiodic; thus assuming primitivity and non-periodicity of the substitution rule is sufficient in this case. However, the corresponding argument breaks down in the non-abelian case. If the ambient substitution group $G$ happens to be $2$-step nilpotent (as in the case of the Heisenberg group), then there is a distinguished class of dilation data for $G$ which we call \emph{special dilation data} (see \S \ref{SpecialSub} below), and in this case we can establish strong aperiodicity by reduction to the abelian case:

\begin{theorem}[Strong aperiodicity]
\label{Thm-strongAP}
Let $\Dd$ be a special dilation datum over a $2$-step nilpotent substitution group. Then for every non-periodic primitive substitution datum $\Ss$ over $\Dd$ the associated substitution system $\Omega(S)$ is strongly aperiodic.
\end{theorem}

It is likely that for suitable substitution data strong aperiodicity can also be established for certain (maybe even all) higher step examples by an inductive argument similar to ours, but since the set-up gets quite technical if one increases the step-size, we will not pursue this here.

\subsection{Application to Delone sets} 
Our symbolic results can be used to establish corresponding results for Delone sets in the usual way. Given a Delone subset $\Lambda$ of a Lie group $G$ we denote by $\Omega_\Lambda$ the $G$-orbit closure of $\Lambda$ with respect to the Chabauty-Fell topology; from Theorems \ref{Thm-ExPrimNonPerSubst},  \ref{MainTheorem}  and \ref{Thm-strongAP} we can then deduce the following corollary by standard methods (see Section~\ref{Sec-Delone}).

\begin{corollary}[Linearly repetitive Delone sets] 
\label{Cor-exLR_Del} 
Every substitution group $G$ contains a linearly repetitive Delone set $\Lambda$ with trivial $G$-stabilizer such that $\Omega_\Lambda$ is minimal and uniquely ergodic. If $G$ is $2$-step nilpotent, then we can moreover arrange for $\Omega_\Lambda$ to be strongly aperiodic.
\end{corollary}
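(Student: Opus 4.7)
The plan is to reduce each of the Delone-set assertions to the corresponding symbolic statements of Theorems~\ref{Thm-ExPrimNonPerSubst}, \ref{MainTheorem}, \ref{Thm-UniqueErg} and~\ref{Thm-strongAP} by means of the standard passage (to be developed in Section~\ref{Sec-AssDelDynSyst}) from a primitive $\Gamma$-subshift inside a lattice $\Gamma < G$ to a Delone dynamical system in $G$. Concretely, given a RAHOGRASP $G$, I would first invoke Theorem~\ref{Thm-ExPrimNonPerSubst} with a two-letter alphabet $\Aa = \{a_0, a_1\}$ to fix a homogeneous dilation datum $\Dd = (G, d, (D_\lambda), \Gamma, V)$ together with a primitive non-periodic substitution datum $\Ss$ over $\Dd$. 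Theorem~\ref{MainTheorem} then furnishes the substitution map $S$, the minimal $\Gamma$-invariant subshift $\Omega(S) \subset \Aa^\Gamma$, a fixpoint $\omega_0$ of some power $S^n$ with trivial $\Gamma$-stabilizer, and linear repetitivity of every $\omega \in \Omega(S)$. Theorem~\ref{Thm-UniqueErg} further guarantees that $\Omega(S)$ is uniquely ergodic as a $\Gamma$-space.

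To manufacture a Delone set I would mark one letter: set $\Lambda := \omega_0^{-1}(a_0) \subset \Gamma \subset G$. Uniform discreteness is inherited from $\Gamma$, while relative density follows from primitivity, since some iterate $S^m$ forces the letter $a_0$ to appear in every sufficiently large patch of $\omega_0$; combined with the cocompactness of $\Gamma$ in $G$ this gives a uniform bound on gaps. Linear repetitivity of $\omega_0$ in the symbolic sense transfers to linear repetitivity of $\Lambda$ in $(G,d)$ via the quasi-isometric comparison between the word metric on $\Gamma$ and the ambient left-invariant metric $d$.

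The crux is then to identify the Chabauty--Fell orbit closure $\Omega_\Lambda$ with the suspension $G \times_\Gamma \Omega(S)$ (up to the chosen marker $a_0$), on which $G$ acts by left translation in the first factor. Under this identification, minimality and unique ergodicity of the $\Gamma$-system $\Omega(S)$ translate directly into the corresponding properties of the $G$-system $\Omega_\Lambda$ (with invariant measure built from Haar measure on $G$ and the unique $\Gamma$-invariant probability measure on $\Omega(S)$). Triviality of the $\Gamma$-stabilizer of $\omega_0$, combined with $\Lambda \subset \Gamma$ and the fact that $\Gamma$ is discrete in $G$, then forces triviality of the $G$-stabilizer of $\Lambda$. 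For the final assertion, if $G$ is $2$-step nilpotent and carries its canonical homogeneous structure, Theorem~\ref{Thm-strongAP} upgrades $\Omega(S)$ to strong aperiodicity, and the same suspension identification promotes this to strong aperiodicity of $\Omega_\Lambda$.

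The main obstacle is the suspension step itself, i.e.\ the precise comparison between the symbolic $\Gamma$-dynamics on $\Omega(S)$ and the geometric $G$-dynamics on $\Omega_\Lambda$ in the Chabauty--Fell topology. In the Euclidean setting this is routine, but in a general non-abelian Lie group one must carefully track continuity across the boundary of the fundamental domain $V$ under the (non-isometric) left $\Gamma$-action on $G$, and one must rule out continuous stabilizers of $\Lambda$ emerging from directions in $G \setminus \Gamma$; this is precisely where the uniform discreteness of $\Lambda$ and the Delone-compactness of $\Omega_\Lambda$ must be exploited. Once this framework is in place in Section~\ref{Sec-AssDelDynSyst}, the transfer of minimality, unique ergodicity, (strong) aperiodicity and linear repetitivity is immediate from the symbolic inputs above.
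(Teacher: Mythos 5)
Your proposal is correct and follows essentially the same route as the paper: mark one letter of a fixpoint $\omega$ of a power of $S$ coming from a primitive non-periodic substitution datum, observe that $\Lambda=\omega^{-1}(a_0)$ is a linearly repetitive Delone set in $\Gamma\subset G$, identify $\Omega_\Lambda$ with the induced system $(G\times\Omega(S))/\Gamma$, and transfer minimality, unique ergodicity, triviality of the stabilizer and (in the $2$-step case) strong aperiodicity from Theorems~\ref{MainTheorem}, \ref{Thm-UniqueErg}, \ref{Thm-weakAP-ex} and \ref{Thm-strongAP}. The only cosmetic differences are that the paper performs the hull identification via weighted Delone sets (Proposition~\ref{equihulls}, Lemma~\ref{DeloneVsMeasure}, Theorem~\ref{Thm-UEallhulls}, relying on \cite{BHP20-LR,BjHaPoII}) and obtains relative density of $\Lambda$ dynamically from minimality (no fixpoint, hence $\emptyset\notin\Omega_\Lambda$, via \cite{BjHa18}) rather than from primitivity directly.
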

It is an interesting problem, which lcsc groups admit linearly repetitive Delone sets as in the corollary. In the abelian case, the standard way to construct such sets is either by substitution methods or using the cut-and-project method of Meyer for a carefully chosen window. In the non-abelian case, the methods of the present paper provide non-periodic linearly repetitive Delone sets in substitution groups, but it is unclear how to produce such sets beyond the current setting. For example, we would like to propose the question whether the $3$-dimensional Lie group $\mathrm{SL}_2(\RR)$ admits any linearly repetitive Delone set (say, with respect to a left-invariant Riemannian metric) of unbounded pattern complexity. By analogy with the abelian case (and since the substitution method is not applicable) one might try to use Meyer's cut-and-project methods; however, at least for polytopal windows it was established in \cite{KaiserDissUnpublished} that 
the pattern counting functions of cut-and-project sets in $\mathrm{SL}_2(\RR)$ grow exponentially, so this is unlikely to work. In general, it would be desirable to obtain non-trivial lower bounds on the possible complexity and repetitivity functions of aperiodic Delone sets for various classes of non-abelian lcsc groups. 

\subsection{Related work}
Although there is a large body of literature concerning dynamical systems over abelian groups which satisfy some form of linear repetitivity, very few examples seem to be known in the non-abelian case. In fact, the only examples that we are aware of are those constructed recently by P{\'e}rez \cite{Perez} in the context of Schreier graphs associated with certain actions of spinal groups. We thus believe that the present article provides the first construction of aperiodic linearly repetitive Delone sets in non-abelian nilpotent Lie groups and thus (via the Voronoi construction) of aperiodic self-similar tilings in such groups. On the contrary, \emph{periodic} self-similar tilings of substitution groups have been a subject of study ever since Strichartz' highly influential article \cite{Str92}; see in particular the work of Gelbrich \cite{Gelbrich} for a very explicit construction of periodic self-similar tilings in the Heisenberg group.

Concerning general Delone sets in nilpotent Lie groups, not a lot seems to be known yet. Machado \cite{Machado1} has established that such Delone sets satisfy the Meyer condition if and only if they are relatively dense subsets of cut-and-project sets. The linearly repetitive Delone sets constructed in the current article are actually Meyer, but for the trivial reason that they are even relatively dense in -- and hence bounded displacement (BD) equivalent and bilipschitz (BL) equivalent to -- a lattice. On the contrary, the work of Dymarz, Kelly, Li and Lukyanenko \cite{DKLL18} shows that there are many different BD and BL equivalence classes of Delone sets in nilpotent Lie groups, which in general cannot be represented by lattices. We currently do not know whether BL equivalence classes in nilpotent Lie groups, which do not contain a lattice, can admit linearly repetitive Delone sets. 

Our examples are also special from a dynamical point of view, since the horizontal factors (in the sense of \cite{BjHa3}) of their hull dynamical systems are homogeneous. Nevertheless, it might be interesting to study their central diffraction in the sense of \cite{BjHa3} and compare it to the central diffraction of the ambient lattices.

Section~\ref{SecDilSubExamples} introduces the general framework, guided by several illustrative examples. 
In Section~\ref{Sec-SubstMap} we construct and characterize the associated substitution map and analyze the support growth under iterated applications of this map. 
Section~\ref{Sec-SubstDynamSyst} develops the corresponding subshifts and contains the proof of Theorem~\ref{MainTheorem}. 

In Section~\ref{Sec-Ex_GoodSubstData} we establish the existence of primitive and non-periodic substitution data and prove Theorem~\ref{Thm-ExPrimNonPerSubst} by introducing the notion of good substitution rules. 
Section~\ref{Sec-StrongAper} contains the proof of Theorem~\ref{Thm-strongAP}. 
These results are then applied in Section~\ref{Sec-Delone} to construct (strongly) aperiodic, linearly repetitive Delone sets in substitution groups, yielding Corollary~\ref{Cor-exLR_Del}. 

Up to this point, only a few structural facts about Lie groups are required; these are collected and developed in Section~\ref{Sec-Lie}. 
We use these Lie-theoretic results as a black box at several points, so that the exposition of this paper is accessible without specific expertise in Lie theory. 

Finally, Appendix~\ref{AppendixCensus} demonstrates that already in low dimensions a large variety of substitution groups arises.

\subsection*{Acknowledgment} We are grateful to Boris Solomyak and Enrico LeDonne for explanations concerning non-periodic substitutions in the geometric setting and related results for symbolic dynamical systems, respectively homogeneous Lie groups and their metrics. We are also grateful to Yves Cornulier and an anonymous referee for explanations concerning gradings on nilpotent Lie algebras, and in particular for pointing out the classification of maximal gradings on low-dimensional nilpotent Lie algebras by Carles \cite{Carles1, Carles2}, and to Daniel Roca Gonzalez for suggesting the axioms in Proposition~\ref{Prop-SubstitutionAxioms}. Furthermore, we thank Daniel Lenz and Jeffrey Lagarias for inspiring discussions on linearly repetitive Delone sets at the Oberwolfach Workshop 1740 and Michael Baake for a fruitful conversation on proximal pairs and aperiodicity at the Oberwolfach workshop 2101b. 

We thank the Justus-Liebig-Universit\"at Gie{\ss}en, KIT  and the Universit\"at Leipzig for providing excellent working conditions during our mutual visits.
This research was supported through the program ``Research in Pairs'' by the Mathematisches Forschungsinstitut Oberwolfach in 2020 hosting T.H. 
This work was partially supported by the Deutsche Forschungsgemeinschaft [BE 6789/1-1 to S.B.] and the German Israeli Foundation for Scientific Research and Development [I-1485-304.6/2019 to F.P.].

\section{The axiomatic framework}
\label{SecDilSubExamples}

In this section we discuss in more detail the axiomatic framework for substitutions alluded to in Subsection \ref{IntroAxioms} of the introduction. We also discuss several examples in order to illustrate the definitions.

\subsection{Dilation data} 
In this subsection we discuss in more detail the notion of a dilation datum mentioned in the introduction.
\begin{no} Throughout this article, the letter $G$ is reserved for an \emph{lcsc group}, i.e. a topological group whose topology is Hausdorff, locally compact and second countable, and $e$ denotes its neutral element.
Moreover, we denote by $ \mathrm{Aut}(G)$ the group of continuous automorphisms of $G$. If $D: (\mathbb{R}_{>0}, \cdot) \to \mathrm{Aut}(G)$, $\lambda \mapsto D_\lambda$ is a group homomorphism, then $(D_\lambda)_{\lambda > 0}$ is called a \emph{one-parameter group of automorphisms of $G$}.

By a classical theorem of Struble, every lcsc group $G$ admits a proper metric $d$ which induces the given topology and is \emph{left-invariant} in the sense that
$d(xg,xh) = d(g,h)$ for all $x,g,h \in G$.
\end{no}
\begin{definition} 
\label{Def-DilationGroup}
A \emph{dilation group} is a triple $(G, d, (D_\lambda)_{\lambda > 0})$, where
\begin{enumerate}[(D1)]
\item $G\neq \{e\}$ is an lcsc group, called the \emph{underlying group}, and $d$ is a proper left-invariant metric on $G$ which induces the given topology on $G$;
\item $(D_\lambda)_{\lambda > 0}$ is a one-parameter group of automorphisms of $G$, called the \emph{underlying dilation family}, such that 
\[
d(D_\lambda(g), D_\lambda(h)) = \lambda \cdot d(g,h) \quad \text{ for all } \lambda > 0, g,h \in G.
\]
\end{enumerate}
\end{definition}
\begin{no} Let $(G, d, (D_\lambda)_{\lambda > 0})$ be a dilation group. Given $x \in G$ and $r>0$ we denote by
$B(x,r)$ the open ball of radius $r$ around $x$ with respect to $d$; then for all $\lambda > 0$ we have
\begin{equation}
\label{Eq-OldLemma3_1}
D_\lambda(B(x,r)) = D_\lambda(x B(e,r)) = D_\lambda(x) B(e, \lambda r) = B(D_\lambda(x), \lambda r).
\end{equation}
Combining left-invariance of $d$ with the triangle inequality we also obtain
\[
d(gh,x) \leq d(gh,g) + d(g,x) \leq d(h,e) + d(g,x) \quad (g,h,x \in G),
\]
and hence the corresponding balls satisfy
\begin{equation}
\label{Eq-OldLemma3_2}
B(x,r)B(e, s) \subset B(x, r+s).
\end{equation}
In the sequel we will use these identities without further reference. We also set $|g| := d(g,e)$. 

\end{no}
\begin{definition}\label{Def-AdaptedLattice} Let $(G, d, (D_\lambda)_{\lambda > 0})$ be a dilation group and let $\Gamma < G$ be a discrete subgroup.
	\begin{enumerate}[(i)]
		\item $\Gamma$ is called a \emph{lattice} if $G/\Gamma$ admits a $G$-invariant probability measure. It is called a \emph{uniform lattice} if $G/\Gamma$ is compact.
		\item $\Gamma$ is called \emph{adapted} to $(D_\lambda)_{\lambda > 0}$ if there exists some $\lambda_0 > 1$ such that $D_{\lambda_0}(\Gamma) \subset \Gamma$.
	\end{enumerate}
\end{definition}

\begin{no}\label{DInclusionProper} Let $(G, d, (D_\lambda)_{\lambda > 0})$ be a dilation group. A subset $\Lambda \subset G$ is called \emph{$(r,R)$-Delone} for some $R\geq r>0$ if it is \emph{$r$-uniformly discrete} and \emph{$R$-relatively dense}, i.e. $\mathrm{dist}(g, \Gamma) \leq R$ for all $g \in G$ and $d(\gamma_1, \gamma_2) \geq r$ for all distinct elements $\gamma_1, \gamma_2 \in \Gamma$. Every discrete subgroup of $G$ is uniformly discrete, and hence a uniform lattice is the same as Delone subgroup. Moreover, if $G$ is non-compact, then every lattice in $G$ is infinite.

If a subgroup $\Gamma < G$ is $r$-uniformly discrete for some $r>0$, then $D_{\lambda}^n(\Gamma)$ is $\lambda^n r$-uniformly discrete. In particular, if $D_\lambda(\Gamma) = \Gamma$ for some $\lambda$, then $\Gamma$ is  $\lambda^n r$-uniformly discrete for all $n$, hence $\Gamma = \{e\}$. In particular, if $G$ is non-compact, then any adapted lattice satisfies $D_{\lambda_0}(\Gamma) \subsetneq \Gamma$ for some $\lambda_0 > 1$.
\end{no}
\begin{no} Let $\Gamma < G$ be a uniform lattice. Then the canonical projection $G \to G/\Gamma$ is a covering map and hence
there exist a bounded Borel set $V$ with non-empty interior which contains $e$ and intersects each $\Gamma$-orbit (for the left-multiplication action of $\Gamma$ on $G$) in a single point. We refer to such a set $V$ as a \emph{good fundamental domain} for $\Gamma$. Note that it is always possible to choose $V$ to contain a neighbourhood of $e$, but we do not want to assume this here, since it would exclude certain examples from the literature (as in \cite{BaBePoTe25,FraMa22}).
\end{no}
\begin{definition} 
\label{Def-DilationDatum}
A \emph{dilation datum} $\Dd = (G, d, (D_\lambda)_{\lambda >0}, \Gamma, V)$ consists of an underlying dilation group $(G, d, (D_\lambda)_{\lambda > 0})$, an adapted uniform lattice $\Gamma < G$ and a good fundamental domain $V$ for $\Gamma$.
\end{definition}
Dilation data exist both in the abelian and in the non-abelian world:
\begin{example} 
\label{Ex-EuclideanIntro}
Let $G = (\mathbb{R}^n, +)$ with Euclidean metric $d$ and let $D_\lambda \in \mathrm{Aut}(G)$ be given by
\[
D_\lambda(x_1, \dots, x_n) := (\lambda^{r_1}x_1, \dots, \lambda^{r_n}x_n) \text{ for some parameters }r_1, \dots, r_n > 0.
\]
Then  $(G, d, (D_\lambda)_{\lambda > 0})$ is a dilation group and if $r_1, \dots, r_n$ are integers, then $\Gamma := \mathbb{Z}^n$ is an adapted lattice and $V := [-\frac 1 2, \frac 1 2)^n$ is a fundamental domain. 
\end{example}

\begin{example}
\label{Ex-HeisenbergIntro} 
Let $G = \mathbb{H}_3(\RR)$ be the $3$-dimensional Heisenberg group; we recall that according to our convention this is defined as $\mathbb{R}^3$ with multiplication given by\
\[
(x,y,z)\ast(x', y', z') = (x+x', y+y', z+z' + \tfrac{1}{2} (xy' - yx')).
\]
We then have a dilation datum $\mathcal{D}_{\mathbb{H}}:=(\mathbb{H}_3(\RR), d, (D_\lambda)_{\lambda > 0},\Gamma, V)$, where 
\begin{itemize}
\item $d$ is the left-invariant metric with underlying norm
	\begin{equation}\label{CKMetric}
		|(x,y,z)| = ((x^2 + y^2)^2 + z^2)^{1/4},
	\end{equation}
\item the dilation family $(D_\lambda)_{\lambda > 0}$ is defined by $D_\lambda(x, y, z) := (\lambda x, \lambda y, \lambda^2 z)$,
\item the adapted lattice is\footnote{A more obvious choice of an adapted lattice would be given by $\Gamma' := 2\ZZ \times \ZZ \times \ZZ$, but the above choice will be technically convenient for us.
} $\Gamma:=\{(x,y,z)\in G \,|\, x,y,z\in 2\mathbb{Z}\}$,
\item and $V:=\big[-1,1)^3$.
\end{itemize}
\end{example}

While all of the results of this article are already new and of independent interest in the Heisenberg setting, that example does not fully reflect the generality of our framework. To illustrate its broader scope, we now turn to a more generic example of a \(7\)-dimensional substitution group.

\begin{example}[An infinite family of $3$-step nilpotent substitution groups]
\label{Ex-3Step} 
For every $\mu \in \RR \setminus\{0,1\}$ there is a $1$-connected $3$-step nilpotent Lie group $G_\mu$ (corresponding to the Lie algebra (147E)${}_\mu$ from the table \cite{Gong98}) which is isomorphic to $\RR^7$ with multiplication given by
\[
\begin{pmatrix} x_1\\ x_2\\ x_3\\ x_4\\ x_5\\ x_6 \\ x_7 \end{pmatrix} \ast \begin{pmatrix} y_1\\ y_2\\ y_3\\ y_4\\ y_5\\ y_6 \\ y_7 \end{pmatrix} = \begin{pmatrix} x_1 + y_1\\ x_2 + y_2\\ x_3 + y_3\\ x_4 + y_4 + \frac 1 2 (x_1y_2-x_2y_1)\\ x_5 + y_5 + \frac 1 2 (x_2y_3 - x_3y_2) \\ x_6 + y_6 - \frac 1 2 (x_1y_3 - x_3y_1) \\ x_7 + y_7 + \frac \mu 2(x_2y_6 - x_6y_2) + \frac{1-\mu} 2 (x_3y_4-x_4y_3) \end{pmatrix}.
\]
It turns out that $(G_\mu)_{\mu \in \RR}$ is an infinite family; in fact we have $G_{\mu_1} \cong G_{\mu_2}$ if and only if 
\[
\frac{(1-{\mu_1} + {\mu_1}^2)^3}{{\mu_1}^2(1-{\mu_1})^2} = \frac{(1-{\mu_2} + {\mu_2}^2)^3}{{\mu_2}^2(1-{\mu_2})^2};
\]
Moreover, $G_\mu$ admits a lattice provided $\mu \in \QQ$, so we assume from now on that $\mu = p/q$ for some coprime $p \in \ZZ$ and $q \in \mathbb N$. To construct a dilation structure on $G_\mu$ we observe that there exists $\varepsilon$ is convex in the sense that $D_t(x) \ast D_{1-t}(y) \in A_\varepsilon$ holds for all $x,y \in A_\varepsilon$ and $t\in[0,1]$ (see \cite[Theorem~B]{HeSi90}). For any such $\varepsilon$ there then exists a unique left-invariant metric $d_\varepsilon$ on $G_\mu$ with underlying norm
\[
|x|_\varepsilon = \inf\{t >0 \mid D_{1/t}(x) \in A_\varepsilon\}.
\]
If we now define a family of automorphisms of $G_\mu$ by 
\[
D_\lambda(x_1, \dots, x_7) := (\lambda x_1, \lambda x_2, \lambda x_3, \lambda^2 x_4, \lambda^2 x_5, \lambda^2 x_6, \lambda^3 x_7),
\]
then the triple  $(G_\mu, d_\varepsilon, (D_\lambda)_{\lambda>0})$ is a dilation group. In fact, this example of a dilation group stands prototypical for a large class of examples known as \emph{homogeneous dilation groups}; see
Definition \ref{Def-HomDilGroup} below for the general definition. An an adapted lattice in $G_\mu$ is given by $\Gamma_\mu :=  \ZZ \times (2q\ZZ) \times (2q\ZZ) \times \ZZ^4$, and if we choose $V := \left[-\frac 1 2, \frac 1 2 \right) \times [-q, q)^2 \times  \left[-\frac 1 2, \frac 1 2 \right)^4$, then $\mathcal{D}_{\mu}:=(G_\mu, d_\varepsilon, (D_\lambda)_{\lambda > 0},\Gamma_\mu, V)$ is a dilation datum.
\end{example}

\begin{no}\label{EPG} In all of our examples there is usually a large freedom in choosing the left-invariant metric on $G$; in order to obtain uniquely ergodic substitution systems we will have to restrict this choice slightly. 
Following \cite[Section 4.4]{Nevo} we say that a metric $d$ on a lcsc group $G$ has \emph{exact polynomial growth} with respect to $d$ of the limit $\lim_{r \to \infty} \frac{m_G(B(e,r))}{r^{\kappa}}$ exists for some $\kappa \geq 0$; here $B(e,r)$ denotes the open ball or radius $r$ around $e$ with respect to $d$ and $m_G$ denotes some choice of Haar measure on $G$.  We say that a dilation datum  $\Dd = (G, d, (D_\lambda)_{\lambda >0}, \Gamma, V)$ has \emph{exact polynomial growth} if both $d$ and the restriction $d|_{\Gamma \times \Gamma}$ have exact polynomial growth. As explained in \cite{Nevo}, exact polynomial growth is important in the context of pointwise ergodic theorems, and as explained in the companion paper \cite{BHP25-LR}, it can be used to deduce unique ergodicity from linear repetitivity. 
\end{no}
\subsection{Substitution groups} In this subsection we are concerned with the classification of lcsc groups which admit dilation data. For ease of reference we introduce the following terminology.
\begin{definition}
An lcsc group $G$ is called a \emph{substitution group} if it is the underlying group of a dilation datum.
\end{definition}
We have seen in Example \ref{Ex-EuclideanIntro}, Example \ref{Ex-HeisenbergIntro} and Example \ref{Ex-3Step} that the abelian groups $\RR^n$, the $3$-dimensional Heisenberg group $\mathbb{H}_3(\RR)$ and the groups $G_\mu$ from Example \ref{Ex-3Step} with $\mu\in \QQ$ are substitution groups. All of these examples share a lot of structural features. For example, they are nilpotent Lie groups and diffeomorphic to $\RR^n$ with multiplication given by polynomials; this is not by accident. Indeed, combining a classical theorem of Siebert \cite{Siebert86} with more recent work of Cornulier \cite{Co1} one can obtain the following characterizations of substitution groups:
\begin{theorem}[Cornulier--Siebert]\label{CornulierSiebert} For an lcsc group $G$ the following are equivalent:
\begin{enumerate}[(i)]
\item $G$ is a substitution group.
\item $G$ is the underlying group of a dilation group and contains a lattice.
\item $G$ admits a dilation datum of exact polynomial growth.
\item $G$ is a connected and simply-connected Lie group whose Lie algebra $\L g$ admits a derivation with positive eigenvalues and a basis with rational structure constants.
\end{enumerate}
Moreover, if $G$ satisfies these conditions, then there exists a homeomorphism $G \cong \RR^n$ which intertwines the group multiplication with a multiplication map $\RR^n \times \RR^n \to \RR^n$ whose coordinate functions are polynomials.
\end{theorem}
\begin{no}\label{DInclusionProper2}
The proof of Theorem \ref{CornulierSiebert} uses very different methods than most of the rest of this article (in particular, Lie theory and the theory of linear algebraic groups) and is thus deferred to Section~\ref{Sec-Lie}; see Theorem~\ref{Thm-GoodGroups} (in combination with Corollary~\ref{ExExPolGr}) for a more detailed version and Section~\ref{Sec-Lie} for proofs of the following facts: Condition (iv) implies that every substitution group admits a diffeomorphism to $\RR^n$ which intertwines the group multiplication with a multiplication map $\RR^n \times \RR^n \to \RR^n$ whose coordinate functions are polynomials. In particular, every substitution group is non-compact, which by \S \ref{DInclusionProper} implies that for any dilation datum $\Dd = (G, d, (D_\lambda)_{\lambda >0}, \Gamma, V)$ over $G$ and all $\lambda > 1$ we have $D_\lambda(\Gamma) \neq \Gamma$. In standard Lie theoretic terms\footnote{In a previous version of this article, we referred to simply-connected Lie groups with such a Lie algebras as RAHOGRASPs (rationally homogeneous groups with rational spectrum), but this seems to be at odds with standard Lie theoretic terminology, so we do not use this terminology in the present version.}, the conditions on the Lie algebra in (iv) say that $\L g$ is positively gradable and admits a $\QQ$-structure. See again Section \ref{Sec-Lie} for details.
\end{no}
\begin{remark}[Census] 
To understand the scope of our setting, one has to figure out how many substitution groups there are. In dimensions $\leq 3$ there is only a single non-abelian substitution group up to isomorphism, namely the Heisenberg group, so our setting seems rather restrictive. However, the picture changes drastically if we increase the dimension slightly. To give some impression, let us call a substitution group \emph{irreducible} if it does not split as a non-trivial product of substitution groups; then every substitution group is a product of irreducibles, and $\RR$ is the only abelian irreducible substitution group.

The number of isomorphism classes of irreducible substitution groups in dimensions $1$ to $6$ is given by $1$, $0$, $1$, $1$,  $6$ and $24$ respectively. Starting from dimension $7$, there are infinitely many isomorphism classes of irreducible substitution group. More precisely, it follows from the classification of maximal gradings of $7$-dimensional nilpotent irreducible Lie algebras in dimension $7$ due to Carles \cite{Carles1, Carles2} that 
up to isomorphism there are precisely $7$ infinite families of $7$-dimensional irreducible substitution groups depending on a rational parameter (including the one from Example \ref{Ex-3Step}) and $126$ further irreducible $7$-dimensional substitution groups not belonging to any of these families (see Appendix \ref{AppendixCensus} for details). In dimension $\geq 8$ the number of (families of) substitution groups explodes even more, so much so that any attempt of classification becomes unfeasible. In conclusion we can say that in moderately large dimensions there are hundreds of examples of substitution groups, some depending on parameters, some isolated.
\end{remark}

\subsection{Substitution data} From now on we fix a dilation datum \(\Dd = (G, d, (D_\lambda)_{\lambda >0}, \Gamma, V)\). We say that a $\lambda_0 > 1$ is a \emph{stretch factor} for $\Dd$ if $D_{\lambda_0}(\Gamma) \subset \Gamma$. By definition, every dilation datum admits a stretch factor, and if $\lambda_0$ is a stretch factor then so is $\lambda_0^n$ for any $n \in \mathbb N$; in particular, $\Dd$ admits arbitrarily large stretch factors. Not all stretch factors can be used to produce substitutions with non-empty substitution space; this is a subtlety that requires additional care.
\begin{no}
Given a stretch factor $\lambda_0 > 1$ we define sets $V_{\lambda_0}(n)$ inductively by
\begin{equation}\label{Vlambda0}
V_{\lambda_0}(0) := V \quad \text{and} \quad V_{\lambda_0}(n) := D_{\lambda_0}\big((V_{\lambda_0}(n-1)\cap\Gamma)V\big).
\end{equation}
If $\lambda_0$ is clear from context, then we also write $V(n)$ instead of $V_{\lambda_0}(n)$. The geometric meaning of these sets will become clear in Proposition~\ref{Prop-SupportFormula} below: Their intersections with $\Gamma$ describe the supports of patches obtained by iterating a substitution map with underlying fundamental domain $V$ and stretch factor $\lambda_0$ with starting seed supported at $\{e\}$. To obtain a non-empty substitution space we need these supports to grow in a controlled way; we thus make the following definition.
\end{no}
\begin{definition}\label{Def-stretch-factor} We say that a stretch factor $\lambda_0$ is \emph{$V$-sufficient} if there exist a constant \( C_- > 0 \), an integer \( s \in \NN_0 := \NN \cup \{0\} \), and \( z \in \Gamma \) such that for all \( n \in \NN_0 \), 
\[
D_{\lambda_0}^n\big( B(z, C_-) \big) = B\big( D_{\lambda_0}^n(z), C_-\lambda_0^n) \big) \subseteq V_{\lambda_0}(s+n).
\]
\end{definition}
\begin{no} In the situation of Definition \ref{Def-stretch-factor} we also say that $\lambda_0$ is $V$-sufficient with \emph{parameters} $(C_-, s, z)$. If $V$ is clear from context we simply say that $\lambda_0$ is \emph{sufficient}.
We will see in Proposition~\ref{Prop-SupportGroth}  below that the condition of $\lambda_0$ being $V$-sufficient indeed guarantees that images of finite patches under iterates of any substitution map with underlying fundamental domain $V$ and stretch factor $\lambda_0$ contain arbitrary large balls. For now let us explain how this condition can be checked in praxis.

We first consider the case where $V$ contains an identity neighbourhood. In this case there exist real numbers $r_+ \geq r_- > 0$ such that
\[
B(e, r_-) \subseteq V \subseteq B(e, r_+);
\]
we refer to $r_-$ as an \emph{inner radius} and to $r_+$ as an \emph{outer radius} for $V$. We then say that $\lambda_0$ is \emph{sufficiently large} (relative to V) if
\begin{equation}
 \lambda_0 > 1 + \tfrac{r_+}{r_-}
\end{equation} 

\end{no}
\begin{proposition}
\label{Prop-SuffCriter:suff_large-old}
If $V$ contains an identity neighbourhood and has inner radius $r_-$ and outer radius $r_+$, then every sufficiently large 
$\lambda_0$ is $V$-sufficient with parameters
\[
C_- = r_- \left( \lambda_0 - \left(1 + \tfrac{r_+}{r_-} \right) \right), \qquad s = 1 \qand z = e.
\]
\end{proposition}
\begin{no}\label{ExistenceLargeSF} Note that every substitution group admits a dilation datum $\Dd$ such that $V$ contains an identity neighbourhood; it then follows from Proposition \ref{Prop-SuffCriter:suff_large-old} that this dilation datum admits a $V$-sufficient stretch factor. If $V$ is not chosen to contain an identity neighbourhood, then the proposition does not apply since there is no inner radius for $V$. However, the outer radius $r_+$ can still be defined, and we obtain:
\end{no}
\begin{proposition}
\label{Prop-SuffCriter:suff_large}
If $r_+$ is an outer radius for $V$ and if there exist \( r_0 > \tfrac{r_+ \lambda_0}{\lambda_0 - 1} \), \( s_0 \in \NN_0 \), and \( z_0 \in \Gamma \) such that \( B(z_0, r_0) \subseteq V(s_0) \), then \( \lambda_0 \) is $V$-sufficient, with parameters
\[
C_- = (r_0 - r_+) - \tfrac{r_0}{\lambda_0}, \qquad s=s_0, \qquad z=z_0.
\]
\end{proposition}

We defer the proofs of Proposition~\ref{Prop-SuffCriter:suff_large-old} and Proposition~\ref{Prop-SuffCriter:suff_large} to Subsection \ref{SecRelativelyLarge} in favour of concrete examples. More precisely, we will construct sufficient stretch factors for the three examples considered above. These examples will show that even in the case where $V$ contains an identity neighbourhood the apparently more complicated
Proposition \ref{Prop-SuffCriter:suff_large} often produces better bounds than Proposition~\ref{Prop-SuffCriter:suff_large-old}, see the discussion in \cite{BaBePoTe25}. 

\begin{example} 
\label{Ex-EuclideanIntro-Subst}
Let $\Dd$ be the Euclidean dilation datum defined in Example~\ref{Ex-EuclideanIntro}. Then every $\lambda_0\in\NN$ with $\lambda_0\geq 2$ is a stretch factor, and we claim that it is $V$-sufficient. Our claim implies in particular that all block substitutions in the Euclidean space fall into our setting of substitution datum, e.g. the table and chair tiling substitution.
 
To prove the claim we observe that $V := [-\frac 1 2, \frac 1 2)^n$ has inner radius $r_-=\tfrac{1}{2}$ and outer radius $r_+=\tfrac{1}{\sqrt{2}} + \tfrac{1}{4}$; in view of  Proposition~\ref{Prop-SuffCriter:suff_large-old} this implies the claim for $\lambda_0 \geq 3$.
The case $\lambda_0=2$ requires additional efforts and can be established using Proposition~\ref{Prop-SuffCriter:suff_large}, see \cite[Prop.~3.1]{BaBePoTe25} for details.
\end{example}
\begin{example} Let $\Dd$ be the dilation datum for the Heisenberg group defined in Example \ref{Ex-HeisenbergIntro}. Then every $\lambda_0 \in \NN$ with $\lambda_0 \geq 2$ is a stretch factor for $\Dd$. Moreover, \( V = [-1,1)^3 \) has inner radius $r_- = 1$ and outer radius $1.5$ (since $\sqrt[4]{5} < 1.5$). Proposition~\ref{Prop-SuffCriter:suff_large-old} thus implies that any $\lambda_0 \geq 3$ is sufficient. Again it is possible to improve this to $\lambda_0 \geq 2$ by a direct computation using Proposition~\ref{Prop-SuffCriter:suff_large} and computer assistance, see~\cite{Bec21}. 
\end{example}

\begin{example} Let $\Dd$ be the dilation datum from Example \ref{Ex-3Step} for some fixed rational parameter $\mu = p/q$. Again, every $\lambda_0 \in \NN$ with $\lambda_0 \geq 2$ is a stretch factor for $\Dd$ and since $B(0,r_-)\ \subset V \subset B(0,r_+)$ with $r_- = \frac{1}{\sqrt[3]{2 \varepsilon}}$ and $r_+=\sqrt{2}\frac{q+1}{\varepsilon}$, it follows from Proposition~\ref{Prop-SuffCriter:suff_large-old} that any $\lambda_0 \geq 1+ 2^{5/6}(q+1) \cdot \varepsilon^{-\frac{2}{3}}$ is sufficiently large.
\end{example}

\begin{definition}\label{DefSubstitutionDatum} 
A \emph{substitution datum} $\mathcal S = (\mathcal{A}, \lambda_0, S_0)$ over $\Dd$ consists of 
\begin{itemize}
\item[(S1)] a finite set $\mathcal A$ called the \emph{underlying alphabet};
\item[(S2)] a $V$-sufficient stretch factor $\lambda_0$, called the \emph{underlying stretch factor};
\item[(S3)] a map $S_0: \mathcal A \to \mathcal A^{D_{\lambda_0}(V) \cap \Gamma}$ called the \emph{underlying substitution rule}.
\end{itemize}
\end{definition}
\begin{remark} If we fix a substitution group $G$ and an alphabet $\mathcal A$, then we can always find a dilation datum $\Dd$ over $G$ such that $V$ contains an identity neighbourhood. In view of Proposition~\ref{Prop-SuffCriter:suff_large-old} we can then find a substitution datum $\Ss$ over $\Dd$. Thus every substitution group admits a substitution datum, justifying the terminology. Writing out explicit substitution rules in high dimensions by hand is rather painful and better left to a computer. We thus confine ourselves to a single non-abelian example in the context of the Heisenberg group. 
\end{remark}
\begin{example}
\label{Ex-HeisenbergSubstitution}
Let $\mathcal{D}_{\mathbb{H}}:=(\mathbb{H}_3(\RR), d, (D_\lambda)_{\lambda > 0},\Gamma, V)$ be the dilation datum over the $3$-dimensional Heisenberg group from Example~\ref{Ex-HeisenbergIntro}. We define a substitution datum over $\mathcal{D}_{\mathbb{H}}$ with underlying alphabet $\mathcal{A} := \{a, b\}$ and stretch factor $\lambda_0 := 3$. First, observe that
\[
D_{\lambda_0}(V) \cap \Gamma = \{-2, 0, 2\} \times \{-2, 0, 2\} \times \{-8, -6, -4, -2, 0, 2, 4, 6, 8\}.
\]
and if we define a substitution rule $S_0:\Aa\to\Aa^{D_{\lambda_0}(V)\cap\Gamma}$ as in Figure~\ref{Fig-Heisenberg_S0}, then $(\Aa,3,S_0)$ defines a substitution datum over $\mathcal{D}_{\mathbb{H}}$.
\begin{figure}[htb]
\includegraphics[scale=0.8]{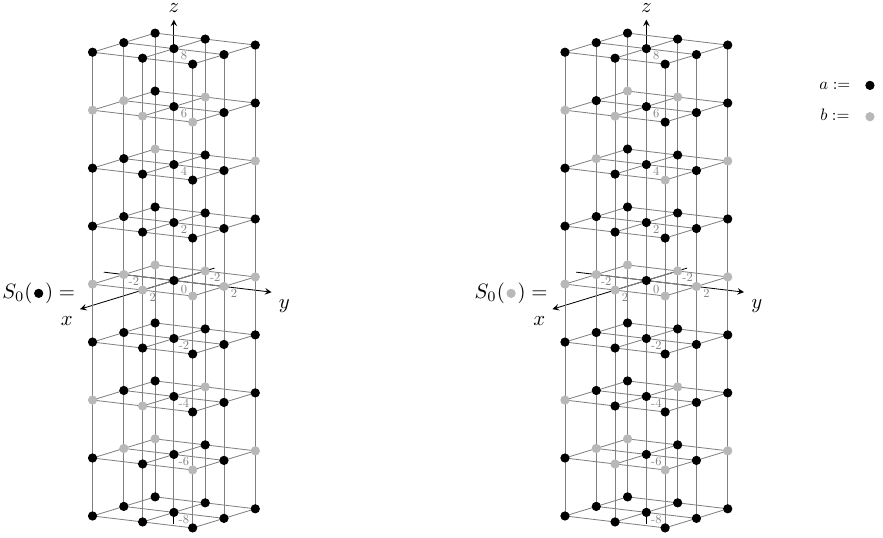}
\caption{An example of a substitution rule in the Heisenberg group with $\lambda_0=3$.}
\label{Fig-Heisenberg_S0}
\end{figure}

\end{example}
\begin{remark} 
Let us note that our framework is also of interest in the abelian case. 
Proposition~\ref{Prop-SuffCriter:suff_large}, together with Proposition~\ref{Prop-LegalFixpoints}, provides a verifiable criterion for the existence of a non-empty subshift -- a subtle point in the context of digit substitutions, see e.g.~\cite{FraMa22}. 
Digit substitutions arising from dilation (in particular diagonal) expansion maps, with $D_{\lambda_0}(V) \cap \Gamma$ used as representatives of $\Gamma / D_{\lambda_0}(\Gamma)$, fall within our setting; see~\cite{BeckusHabil25} for concrete examples. 
In general, it is delicate to determine which digit substitutions yield non-empty subshifts, but for those covered by our framework this issue is resolved in Proposition~\ref{Prop-LegalFixpoints}.
\end{remark}

\begin{example} 
Let \( \Dd := (\RR^2, d_\infty, (D_\lambda)_{\lambda > 0}, \ZZ^2, V) \) be the dilation datum, where  
\( d_\infty(x,y) := \max_{j=1,2} |x_j - y_j| \),  
\( D_\lambda(x,y) := (\lambda x, \lambda y) \),  
and 
\[
V := -\left(\tfrac{1}{4},\tfrac{1}{4}\right) 
   + \Big( [0,1)^2 \setminus [\tfrac12,1)^2 \;\cup\; [-\tfrac12,0)^2 \Big).
\]
Note that \( V \subseteq B(e,r_+) \) for \( r_+ = 1 \).  
A short computation shows that \( \lambda_0 = 2 \) is \(V\)-sufficient by Proposition~\ref{Prop-SuffCriter:suff_large}, with  
\( r_0 := 2.1 > \tfrac{r_+\lambda_0}{\lambda_0 - 1} \),  
\( z_0 = (-6,-6) \),  
and \( s_0 = 4 \), see also \cite[Sec.~3.5]{BeckusHabil25}.

For this dilation datum \( \Dd \) and $\lambda_0=2$, we have 
\[
V(1) \cap \ZZ^2 = \{(-1,-1),\; (0,0),\; (1,0),\; (0,1)\}.
\]
Now consider the substitution datum  
\( \Ss = (\mathcal A, \lambda_0, S_0) \) with alphabet \( \mathcal A=\{a,b\} \), \( \lambda_0 = 2 \), and substitution rule
\[
S_0(a) := 
\begin{array}{c|cc}
   & a &   \\
   & a & a \\
\hline
 b &   &   
\end{array},
\qquad
S_0(b) := 
\begin{array}{c|cc}
   & b &   \\
   & b & b \\
\hline
 a &   &   
\end{array}.
\]
Together with Proposition~\ref{Prop-LegalFixpoints} the previous considerations imply that the associated subshift $\Omega(S)$ defined in Section~\ref{Sec-ExisLegalFixPoints} is non-empty and all elements are linearly repetitive by Theorem~\ref{Thm-LinearRep}. 

This example illustrates the use of Proposition~\ref{Prop-SuffCriter:suff_large}, which provides a verifiable criterion for a substitution datum to give rise to a non-empty subshift with linear repetitive configurations. This is in general a subtle issue, see e.g.\@ the discussion in \cite{FraMa22}.
\end{example}

\subsection{Criteria for sufficient stretch factors}\label{SecRelativelyLarge}
In this subsection we carry out the proofs of Proposition \ref{Prop-SuffCriter:suff_large-old} and Proposition \ref{Prop-SuffCriter:suff_large}; we use this to introduce some general terminology to be used later. Throughout this subsection we fix a dilation datum $\Dd=(G, d_G, (D_\lambda)_{\lambda > 0}, \Gamma, V)$.
\begin{no}
Given a bounded subset $B \subset G$ we define its \emph{inner $\Gamma$-approximation} as 
\begin{equation}\label{InnerGammaApproximation}
B_\Gamma := (B \cap \Gamma)V.
\end{equation}
In other words, $B_\Gamma$ is obtained from $B$ by first passing to the finite sets of lattice points inside $B$ and then thickening the resulting finite set by $V$. It is thus a finite union of translates of $V$. With this notation we can then rewrite the definition of the sets $V_{\lambda_0}(n)$ as
\[
V_{\lambda_0}(0) = V_{\lambda_0} \qand V(n) = D_{\lambda_0}(V_{\lambda_0}(n-1)_\Gamma).
\]
For the remainder of this section we fix a stretch factor $\lambda_0$ and set $D := D_{\lambda_0}$ and $V(n) := V_{\lambda_0}(n)$.
\end{no}
\begin{no} For the proofs of Proposition \ref{Prop-SuffCriter:suff_large-old} and Proposition \ref{Prop-SuffCriter:suff_large} we need to estimate the size of inner approximations. Intuitively, if $B$ is a large open ball around the identity, then $B_\Gamma$ is very close to $B$. At the other extreme, if $B$ contains no lattice points, then $B_\Gamma$ is empty. To make this quantitative we 
introduce the following notation: given a bounded subset $B \subset G$ and $r>0$ we define  
\[
B_{+r} := \bigcup_{x \in B} B(x,r) \qand B_{-r} := \{x \in B \mid B(x,r) \subset B\}.
\]
Note that for all $g \in G$ and $r', r > 0$ we have 
\begin{equation}
\label{eq:Inner-outer_approx-balls}
B(g,r')_{+r}\subseteq B(g,r'+r)
\quad\text{and if } r' > r, \text{ then }\quad
B(g,r'-r)\subseteq B(g,r')_{-r}.
\end{equation}
\end{no}
\begin{lemma}[Size of inner approximation]
\label{Lem-NaiveGrowth} 
If $r_+$ denotes an outer radius for $V$, then
\[B_{-r_+} \subset B_\Gamma \subset B_{+r_+} \quad \text{for all $B \subset G$}.\]
\end{lemma}
\begin{proof} Every $x \in G$ is contained in a set of the form $\gamma V$ for a unique $\gamma \in \Gamma$. If $v := \gamma^{-1}x \in V$, then $d(x, \gamma) = d(\gamma v, \gamma)= d(v,e) < r_+$ and thus $\gamma \in B(x, r_+)$. Now if $x \in B_{-r_+}$, then $B(x, r_+) \subset B$ and hence $\gamma \in B \cap \Gamma$. This implies $x \in B_\Gamma$  and shows the first inclusion. The second inclusion is immediate from $V \subset B(e, r_+)$.
\end{proof}

\begin{proof}[Proof of Proposition \ref{Prop-SuffCriter:suff_large-old}]
Since \( \lambda_0 > 1 + \tfrac{r_+}{r_-} \), we conclude \( \lambda_0 r_- > r_+\). Let \(r:= \lambda_0 r_-\) and $\varepsilon:=\frac{\lambda_0(r-r_+)}{r} -1>0$.
Then $\lambda_0(r-r_+) = (1+\varepsilon) r$. We prove for all $k \in \NN$ that
\begin{equation}\label{IndOldSL}
B(e, \lambda_0^{k-1}\varepsilon r + r) \subset V(1+k).
\end{equation}
For $k=1$, the inclusion $B(e, r_-) \subseteq V$ leads to
\[
B(e,r) = D\big(B(e,r_-)\big) \subseteq D(V)=V(1).
\]
Thus, Lemma~\ref{Lem-NaiveGrowth} imply
\[
B(e, r-r_+) \subseteq V(1)_\Gamma 
	\implies B\big(e,\lambda_0(r-r_+)\big) = D(B(e, r-r_+))  \subseteq D(V(1)_\Gamma) = V(1+1).
\]
Since $\lambda_0(r-r_+) = (1+\varepsilon)r = \lambda_0^{1-1}\varepsilon r +r$, the induction base case is proven. Then we see by the induction hypothesis that
\begin{align*}
B\big(e,\lambda_0^{k-1}\varepsilon r + r\big) \subseteq V(n_0+k) \implies &B\big(e,\lambda_0^{k-1}\varepsilon r +r - r_+\big) \subseteq V(n_0+k)_\Gamma \\
	\implies &B\big(e,\lambda_0^{k}\varepsilon r + \lambda_0 r - \lambda_0 r_+\big) \subseteq V(n_0+k+1)
\end{align*}
while $\lambda_0 r - \lambda_0 r_+ = (1+\varepsilon) r \geq  r$. This finishes the induction and establishes \eqref{IndOldSL}.

\item Now a short computation gives
\[
\frac{\varepsilon r}{\lambda_0} = (r-r_+)-\frac{r}{\lambda_0}
	= C_-
	\implies B(e,C_-\lambda_0^k)\subseteq B(e, \lambda_0^{k-1}\varepsilon r + r)\subset V(1+k).
\]
This proves that $\lambda_0$ is sufficiently large relative to $V$ with parameters $C_-$, $s=1$ and $z=e$. 
\end{proof}

\begin{proof}[Proof of Proposition \ref{Prop-SuffCriter:suff_large}]
Let \( r_0 > \tfrac{r_+ \lambda_0}{\lambda_0 - 1} \), \( s_0 \in \NN_0 \), and \( z_0 \in \Gamma \) be such that \( B(z_0, r_0) \subseteq V(s_0) \). Since
\begin{equation}
\label{eq:proof_suff-large}
r_0 > \tfrac{r_+ \lambda_0}{\lambda_0 - 1} 
	\quad\Longleftrightarrow\quad
(r_0 - r_+)\lambda_0 > r_0
	\quad\Longleftrightarrow\quad	
\frac{r_0 - r_+}{r_0} > \frac{1}{\lambda_0},
\end{equation}
we have \( C_- > 0 \). Let \( \varepsilon = \tfrac{C_-}{r_0} > 0 \), which satisfies \(\varepsilon + \tfrac{1}{\lambda_0} = \tfrac{r_0 - r_+}{r_0}\).

We first prove inductively that
\[
\text{for all } k \in \NN: \quad
B\big(D^k(z_0),\, \lambda_0^k \varepsilon r_0 + r_0 \big) \subseteq V(s_0 + k).
\]

Let $k=1$. Lemma~\ref{Lem-NaiveGrowth} and the inclusions~\eqref{eq:Inner-outer_approx-balls} and \( B(z_0, r_0) \subseteq V(s_0) \) lead to
\[
B(z_0, r_0 - r_+) \subseteq B(z_0, r_0)_{-r_+} \subseteq V(s_0)_\Gamma.
\]
Applying the dilation \( D \) gives
\begin{align*}
B\big(D(z_0),\, \lambda_0 \varepsilon r_0 + r_0 \big) 
	%= B\big(D(z_0),\, \lambda_0(r_0 - r_+)\big) 
	= D\big(B(z_0, r_0 - r_+)\big) 
	\subseteq D\big(V(s_0)_\Gamma\big) = V(s_0 + 1). 
\end{align*}
Suppose now the claimed inclusion holds for some \( k \in \NN \). Equation \eqref{eq:proof_suff-large}, Lemma~\ref{Lem-NaiveGrowth} and the recursive definition of \( V(n) \) imply
\begin{align*}
B\big(D^{k+1}(z_0),\, \lambda_0^{k+1} \varepsilon r_0 + r_0 \big)
	&\subseteq B\big(D^{k+1}(z_0),\, \lambda_0^{k+1} \varepsilon r_0 + (r_0 - r_+)\lambda_0 \big) \\
	&= D\big(B\big(D^k(z_0),\, \lambda_0^k \varepsilon r_0 + r_0 - r_+ \big)\big) \\
	&\subseteq D\big(V(s_0 + k)_\Gamma\big) = V(s_0 + k + 1).
\end{align*}
This completes the induction. To finish the proof, observe that since \( D \) is a dilation,
\[
D^k\big(B(z_0, C_-)\big) 
	= B\big(D^k(z_0),\, \lambda_0^k \varepsilon r_0 \big)  
	\subseteq B\big(D^k(z_0),\, \lambda_0^k \varepsilon r_0 + r_0 \big) 
	\subseteq V(s_0 + k)
\]
proving the statement.
\end{proof}

\section{The substitution map}
\label{Sec-SubstMap}
In this section we discuss the construction of substitution maps from a given dilation and substitution datum. In fact, we extend any given substitution rule to a map on all patches and configurations. 
Throughout this section we fix a dilation datum  $\Dd = (G, d, (D_\lambda)_{\lambda >0}, \Gamma, V)$ and a substitution datum $\mathcal S = (\mathcal{A}, \lambda_0, S_0)$ over $\Dd$. We then abbreviate $D := D_{\lambda_0}$ and note that by \S \ref{DInclusionProper2} we have $D(\Gamma) \subsetneq \Gamma$.

\subsection{Existence and uniqueness}\label{Sec-SubDat-to-SubMap}
In order to state the axioms for a substitution map, we need the following notation.
\begin{no}
Given a subset $M \subset \Gamma$, we refer to an element $P \in \mathcal A^M$ as a  \emph{patch} with \emph{support} $\mathrm{supp}(P) := M$. A patch $P$ is called \emph{finite} if $\mathrm{supp}(P)$ is finite. Denote the space of all patches, respectively all finite patches by
\[
\mathcal A^{**}_\Gamma :=  \bigsqcup_{M \subset \Gamma} \mathcal A^M \qand \mathcal A^*_{\Gamma} := \bigsqcup_{M \subset \Gamma \text{ finite }} \mathcal A^M \subset \mathcal A^{**}_\Gamma.
\]
Then $\Gamma$ acts on $\mathcal A^{**}_\Gamma$ by
\[
\mathrm{supp}(\gamma.P) := \gamma.\mathrm{supp}(P) \qand \gamma.P(x) := P(\gamma^{-1}x), \qquad \gamma \in \Gamma,\, P \in \mathcal A^{**}_\Gamma
\]
preserving the subspaces $ \mathcal A^*_{\Gamma} \subset \mathcal A^{**}_\Gamma$ and  $\mathcal A^\Gamma \subset \mathcal A^{**}_\Gamma$; the induced action on the latter is just the action from the introduction.
Given $a \in \mathcal A$ we denote by $P_a$ the patch with 
\[
\mathrm{supp}(P_a) = \{e\}
\quad\textrm{and}\quad 
P_a(e) = a.
\] 
Moreover, given $M \subset G$, and $P \in \mathcal A^{**}_{\Gamma}$, then we denote by $P|_M := P|_{\mathrm{supp}(P) \cap M}$ the \emph{restriction} of $P$ to $\mathrm{supp}(P) \cap M$. We say that a patch $P$ \emph{extends} a patch $Q$ if $P|_{\mathrm{supp}(Q)}=Q$. 
\end{no}
Our goal is to establish the following proposition:
\begin{proposition}[Axiomatic characterization of the substitution map]
\label{Prop-SubstitutionAxioms} Given $\Dd$ and $\Ss$, there is a unique map $S: \mathcal A^{**}_\Gamma \to \mathcal A^{**}_\Gamma$ with the following properties:
%The substitution map $S: \mathcal A^{**}_\Gamma \to \mathcal A^{**}_\Gamma$ is the unique map satisfying the following properties:
\begin{itemize}
\item[(E1)] $S(P_a) = S_0(a)$ for all $a \in \mathcal A$. %(``extension'')
\item[(E2)] For all $\gamma \in \Gamma$ and $P \in \mathcal A^{**}_\Gamma$ we have 
\[S(\gamma P) = D(\gamma)S(P).\] %(``dilation-equivariance'')
\item[(E3)] For all $P\in \mathcal A^{**}_\Gamma$ and $M\subseteq \supp(P)$ the patch $S(P)$ extends the patch $S(P|_M)$.
\end{itemize}
\end{proposition}
\begin{definition} The map $S$ from Proposition \ref{Prop-SubstitutionAxioms} is called the \emph{substitution map} associated with the pair $(\Dd, \mathcal S)$.
\end{definition}
\begin{no}\label{SUnique} To see that $S(P)$ is uniquely determined by (E1)--(E3) for all $P \in \mathcal A^{**}_\Gamma$, we observe that view of (E3) the patch $S(P)$ is determined by the restrictions $P|_{\{\gamma\}}$ with $\gamma \in \Gamma$, hence we may assume that $|\mathrm{supp}(P)| = 1$. Then $\mathrm{supp}(P)$ is a translate of $\{e\}$ and hence $S(P)$ is uniquely determined by (E2) and (E1). It remains to show that $S_0$ can be extended according to (E1)--(E3) without running into contradictions. For this we will provide an explicit construction after some preliminaries.
\end{no}
\begin{no} The fact that $V$ is a fundamental domain for $\Gamma$ means that
\[
G = \bigsqcup_{\gamma \in \Gamma} \gamma V.
\]
Since  for every $\gamma \in \Gamma$ we have $\gamma V \cap \Gamma = \{\gamma\}$, this implies
\begin{equation}
MV \cap \Gamma = M \quad \text{for every }M \subset \Gamma. 
\end{equation}
Moreover, since $D$ is an automorphism, we have
\begin{equation}\label{FundDomainIterate}
G = D^n(G) = \bigsqcup_{\gamma \in \Gamma} D^n(\gamma) D^n(V).
\end{equation}
In particular, $D^n(V)$ is a fundamental domain for $D^n(\Gamma)$. Finally, for all $M \subset \Gamma$ we have
\begin{equation}\label{PreExtension}
D(MV) \cap \Gamma = \bigsqcup_{\eta \in M} \left( D(\eta)D(V)\right) \cap \Gamma = \bigsqcup_{\eta \in M} D(\eta)(D(V) \cap \Gamma).
\end{equation}
Here the last equality follows from $D(\Gamma)\subseteq \Gamma$ and the fact that $(\gamma A)\cap \Gamma = \gamma (A \cap \Gamma)$ for every $\gamma \in \Gamma$ and $A \subset G$.
\end{no}
\begin{construction}[Explicit substitution map]\label{Const-ExtensionRule}  We now construct an explicit map $S: \mathcal A^{**}_\Gamma \to \mathcal A^{**}_\Gamma$ for which we will verify the axioms (E1)--(E3).
Given a patch $P$ with support $M$ we define a new patch $S(P)$ as follows:
\begin{enumerate}
\item[(a)] The support of $S(P)$ is $\mathrm{supp}(S(P)) := D(MV) \cap \Gamma$.
\item[(b)] Now let $\gamma \in D(MV) \cap \Gamma$. By \eqref{PreExtension} there exists a unique $\eta \in M$ such that
\[
\gamma \in D(\eta)(D(V) \cap \Gamma).
\]
\item[(c)] If $\eta\in M$ is as in (b), then $P(\eta) \in \mathcal A$ and $D(\eta)^{-1}\gamma \in D(V) \cap \Gamma$, hence we may define
\[
S(P)(\gamma) := S_0(P(\eta))(D(\eta)^{-1}\gamma).
\]
\end{enumerate}
\end{construction}
\begin{proof}[Proof of Proposition \ref{Prop-SubstitutionAxioms}] Uniqueness was established in \S \ref{SUnique}, and we claim that the map $S$ from Construction \ref{Const-ExtensionRule} satisfies (E1)--(E3):

\item (E1) This is immediate from the construction.

\item (E2) Let $\gamma \in \Gamma$ and $P \in \mathcal A^{**}_\Gamma$. Set $M:= \mathrm{supp}(P)$ and note that $\mathrm{supp}(\gamma P) = \gamma M$. Since $D$ is an automorphism and $\Gamma$ is $D(\Gamma)\subseteq\Gamma$ we have
\begin{align*}
\mathrm{supp}\big( S(\gamma P) \big) 
	&= D(\gamma MV) \cap \Gamma 
	= D(\gamma) \big( D(MV) \cap \Gamma \big) 
	= D(\gamma)  \mathrm{supp}\big( S(P) \big),
\end{align*}
which shows the supports in (E2) coincide. Now let $\zeta\in \mathrm{supp}\left(S(\gamma P)\right) =  D(\gamma) D(MV) \cap \Gamma$ and set $\zeta' := D(\gamma^{-1})\zeta$. Then $\zeta' \in \mathrm{supp}(S(P))$ and  $S(P)(\zeta') = D(\gamma)S(P)(\zeta)$ follows. Thus, it suffices to prove that
\begin{equation}\label{E2ToShow}
S(\gamma P) (\zeta) = S(P)(\zeta').
\end{equation}
Let $\eta' \in M$ be the unique element such that $\zeta' \in D(\eta')(D(V) \cap \Gamma)$. Set $\eta := \gamma \eta' \in \gamma M$ so that $\zeta \in D(\eta)(D(V) \cap \Gamma)$.
By construction we then have
\[
S(\gamma P)(\zeta)= S_0(\gamma P(\eta))(D(\eta)^{-1}\zeta)\qand S(P)(\zeta') = S_0(P(\eta'))(D(\eta')^{-1}\zeta').
\]
Since $\gamma P(\eta) = P(\gamma^{-1}\eta) = P(\eta')$ and $D(\eta)^{-1}\zeta = D(\gamma \eta')^{-1} D(\gamma) \zeta' = D(\eta')^{-1}\zeta'$ this implies \eqref{E2ToShow} and finishes the proof of (E2). 

\item (E3) Let $P \in \mathcal{A}_{\Gamma}^{**}$ and $M\subseteq \operatorname{supp}(P)$.  By Construction~\ref{Const-ExtensionRule}~(a) and \eqref{PreExtension}, we have 
\[
\operatorname{supp}\big( S(P|_M) \big) =D(MV) \cap \Gamma
	= \bigsqcup_{\eta \in M} D(\eta)(D(V) \cap \Gamma).
\]
Let $\gamma \in \supp\big( S(P|_{M}) \big)$ and $\eta \in M \subseteq \supp(P)$ be such that $\gamma \in D(\eta)\big( D(V) \cap \Gamma \big)$. Then by construction we have $S(P)(\gamma) = S_0\big( P(\eta) \big) \big( D(\eta)^{-1} \gamma \big) = S\big( P|_{M} \big)(\gamma)$, which shows (E3).
\end{proof}
The following proposition collects some basic properties of the substitution map. Given $M \subset \Gamma$ we equip $\mathcal A^M$ with the product topology; then each $\mathcal A^M$ is a compact metrizable space.
\begin{proposition}
\label{Prop-SubstitutionProperties} 
The substitution map $S: \mathcal A^{**}_\Gamma \to \mathcal A^{**}_\Gamma$ satisfies the following properties.
\begin{enumerate}[(a)]
\item The restriction $S: \mathcal A^\Gamma \to \mathcal A^\Gamma$ is continuous. 
\item For each $P \in \mathcal{A}_{\Gamma}^{**}$ and all $n \in \NN$, we have $S^n(\gamma P) = D^n(\gamma) S^n(P)$.
\item If $(M_i)_{i \in I}$ is a family of subsets of $\Gamma$ and $M = \bigcup M_i$, then for all $P \in \mathcal A^M$ we have
\[
\mathrm{supp}(S(P)) = \bigcup_{i \in I}\mathrm{supp}(S(P|_{M_i})) \; \text{and} \; S(P)(\gamma) = S(P|_{M_i})(\gamma) \text{ for all }\gamma \in \mathrm{supp}(S(P|_{M_i})).
\]
\end{enumerate}
\end{proposition}

\begin{proof} 
(a) Let \( M \subseteq \Gamma \) be finite and fix an arbitrary patch \( P \in \mathcal{A}^M \). Define the open set
\(
U := U_P := \big\{ \omega \in \mathcal{A}^\Gamma \,:\, \omega|_M = P \big\}.
\)
By definition of the product topology on \( \mathcal{A}^{\Gamma} \), it suffices to show that \( S^{-1}(U) \) is open in \( \mathcal{A}^\Gamma \). Recall that
\(
\Gamma = \bigsqcup_{\eta \in \Gamma} \big( D(\eta) D(V) \cap \Gamma \big),
\)
so there exist \( m \in \mathbb{N} \) and pairwise distinct \( \eta_j \in \Gamma \), \( 1 \leq j \leq m \), such that
\[
M \subseteq \bigsqcup_{j=1}^m \big( D(\eta_j) D(V) \cap \Gamma \big).
\]
Set \( M' := \{ \eta_j : 1 \leq j \leq m \} \). For each \( \omega \in S^{-1}(U) \), define
\(
W_\omega := \big\{ \omega' \in \mathcal{A}^\Gamma \,:\, \omega'|_{M'} = \omega|_{M'} \big\}
\)
an open neighborhood of $\omega$. 
Clearly,
\[
S^{-1}(U) \subseteq \bigcup_{\omega \in S^{-1}(U)} W_\omega,
\]
and the right-hand side is open as a union of open sets. Thus, it remains to show that \( S(W_\omega) \subseteq U \) for all \( \omega \in S^{-1}(U) \).

\item Fix \( \omega \in S^{-1}(U) \) and let \( \omega' \in W_\omega \), i.e.\ \( \omega(\eta_j) = \omega'(\eta_j) \) for all \( 1 \leq j \leq m \). Then,
\[
S_0(\omega(\eta_j)) = S_0(\omega'(\eta_j)) \quad \text{for all } 1\leq j m.
\]
Now, let \( \gamma \in \bigsqcup_{j=1}^m D(\eta_j) D(V) \cap \Gamma \). Then there exists a unique \( i \in \{1, \dots, m\} \) such that \( \gamma \in D(\eta_i) D(V) \cap \Gamma \). By Construction~\ref{Const-ExtensionRule}, we have
\[
S(\omega)(\gamma) = S_0(\omega(\eta_i))\big( D(\eta_i^{-1}) \gamma \big) = S_0(\omega'(\eta_i))\big( D(\eta_i^{-1}) \gamma \big) = S(\omega')(\gamma).
\]
Since \( M \subseteq \bigsqcup_{j=1}^m D(\eta_j) D(V) \cap \Gamma \), it follows that \( S(\omega)|_M = S(\omega')|_M \), hence \( S(\omega') \in U \). Therefore, \( S(W_\omega) \subseteq U \), and we conclude that \( S^{-1}(U) \) is open.

\item (b) This follows immediately by induction from (E2) in Proposition~\ref{Prop-SubstitutionAxioms}.

\item (c) This follows immediately from (E3) in Proposition~\ref{Prop-SubstitutionAxioms}.
\end{proof}

\subsection{Support growth}\label{SecGrowth} From now on $S$ denotes the substitution map associated with $\Dd$ and $\Ss$. We will be concerned with the following problem:
\begin{figure}[hb]
\includegraphics[scale=0.45]{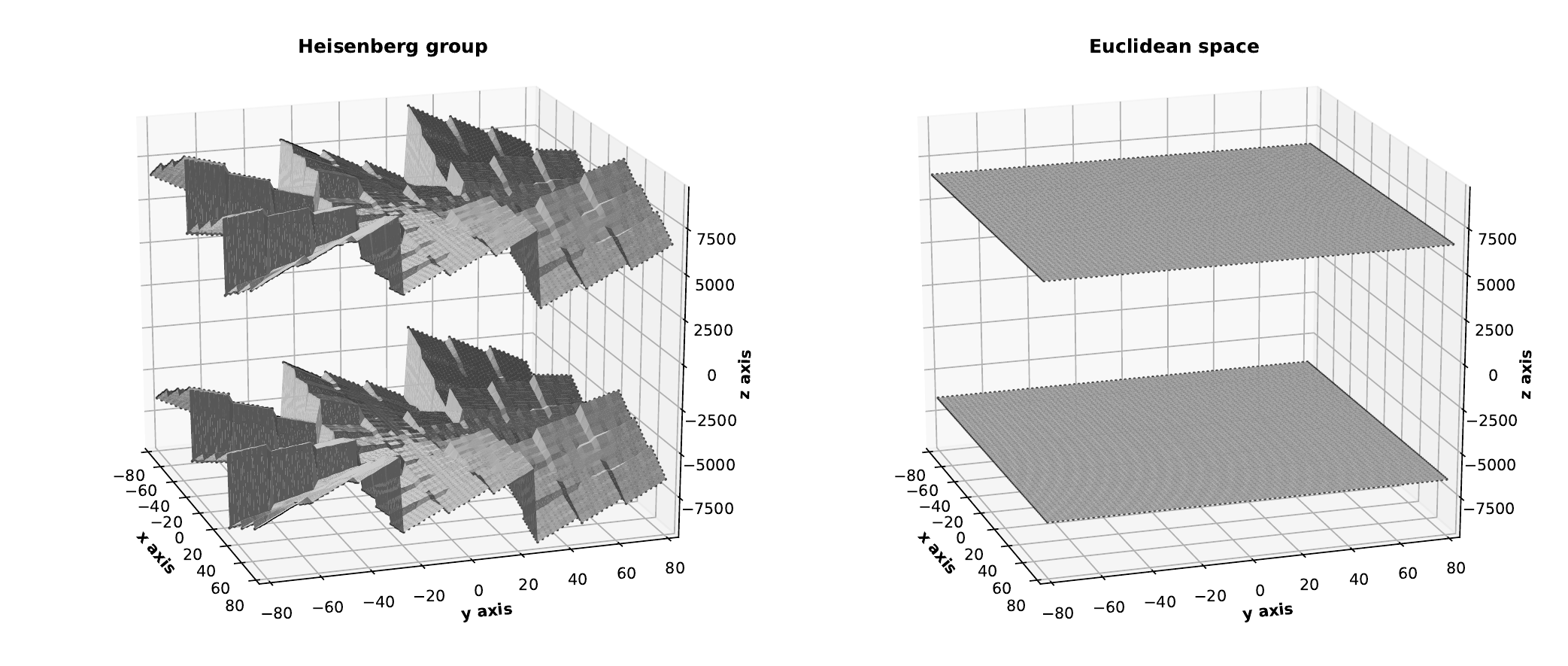}
\caption{Maximal and minimal $z$-values in the support of $S^4(P_a)$ for the ``same'' substitution in the Heisenberg group and Euclidean space.}
\label{Fig-Heisenberg_Supp}
\end{figure}
\begin{problem}[Support growth problem] 
Let \( P \in \mathcal{A}^*_\Gamma \) be a finite patch. What is the support of \( S^n(P) \)? In what sense does it grow as \( n \to \infty \)?
\end{problem}

To see what we can expect, let us consider an example:

\begin{example}[Euclidean vs.\ non-euclidean support growth]\label{Ex-HeisenbergSupport}
In Example~\ref{Ex-HeisenbergSubstitution}, we introduced a substitution datum $\Ss = (\Aa,3,S_0)$ over the Heisenberg dilation datum $\mathcal{D}_{\mathbb{H}}$. Due to the fact that the underlying space of $\mathbb H_3(\RR)$ is $\RR^3$ we can use ``the same'' rule to define a substitution over $\RR^3$. More precisely, consider the Euclidean dilation datum $\mathcal{D}_{\RR^3} := (\RR^3, d, (D_\lambda)_{\lambda > 0}, \Gamma_{\RR}, V)$, where $(\RR^3, d, (D_\lambda)_{\lambda > 0})$ is as in Example~\ref{Ex-EuclideanIntro} with $r_1 = r_2 = 1$ and $r_3 = 2$, and we take $\Gamma_{\RR} := (2\ZZ)^3$ and $V := [-1,1)^3$. Then $\Ss$ can be interpreted both as a substitution datum over $\mathcal{D}_{\mathbb{H}}$ and as a substitution datum over $\mathcal D_{\RR^3}$. Due to the differing geometry of Euclidean space and the Heisenberg group the resulting substitution maps $S_{\mathbb H}$ and $S_{\RR^3}$ are fundamentally different, and this is reflected in their support growth.

In the Euclidean setting, the support of $S_{\RR^3}^n(P_a)$ is given by the (discrete) cube
\[
([-\lambda_0^n,\lambda_0^n)^2 \times [-\lambda_0^{2n},\lambda_0^{2n})) \cap \Gamma,
\]
and these discrete cubes grow nicely as $n \to \infty$. To describe the picture in the Heisenberg case we consider the projection
\[
p_{12}: \Gamma \to (2\ZZ)^2, \; (x,y,z) \mapsto (x,y)
\]
Then the images $p_{12}(S_{\mathbb{H}}^n(P_a))$ are just given by the growing discrete squares $([-\lambda_0^{2n}, \lambda_0^{2n}) \cap 2\ZZ)^2$ as in the Euclidean case, but the fibers 
$F^n_{x,y} = S_{\mathbb{H}}^n(P_a) \cap p_{12}^{-1}(x,y)$ get shifted depending on the basepoint $(x,y)$, and these shifts lead to a more and more ragged boundary of $S_{\mathbb{H}}^n(P_a)$.
This difference is striking already for $n=4$; see Figure~\ref{Fig-Heisenberg_Supp} for a plot of the points of maximal in minimal $z$-value in each of the fibers  $F^n_{x,y}$ and the corresponding plot for the Euclidean case. These pictures seem to indicate that, despite the fact that growth is much more irregular in the Heisenberg setting, diameters of images of patches under iterations of the substitution map grow both in the Euclidean and in the Heisenberg group. We are going to establish this.
\end{example}

\begin{no}\label{VDef} To describe the support growth of general patches we need a generalization of the sets $V(n) = V_{\lambda_0}(n)$ introduced in \eqref{Vlambda0}. Given a finite subset $M \subset \Gamma$ we define subset $V(n, M) \subset G$ inductively
\[
V(0, M) := MV \qand V(n, M) := D(V(n-1, M)_\Gamma);
\]
in particular, $V(n, \{e\}) = V(n)$. 
\end{no}
\begin{proposition}[Support formula] 
\label{Prop-SupportFormula}
If $P \subset \Gamma$ is a finite patch with non-empty support $M$ 
\[
\mathrm{supp}(S^n(P)) = V(n, M) \cap \Gamma.
\]
In particular, for every $a \in \mathcal A$ we have $\mathrm{supp}(S^n(P_a)) = V_n \cap \Gamma$, and
\[
S^n(Q)|_{V(n,F)}=S^n(Q|_F)\qquad
\text{for all } Q\in\Aa^{**}_\Gamma \text { and } \emptyset\neq F\subseteq \supp(Q) \text{ finite}.
\]
\end{proposition}

\begin{proof} 
If $n=1$, then $\mathrm{supp}(S(P)) = D(MV)\cap\Gamma$ holds by definition. Since $e \in V$ we have $MV\cap\Gamma=M$ and hence $D(MV)=D((MV\cap\Gamma)V) = V(1,M)$ proving the induction base.
Since $S^{n+1}(P) = S\big( S^n(P)\big)$, we conclude by induction that
$$
\mathrm{supp}(S^{n+1}(P)) = D((V(n,M)\cap\Gamma)V)\cap\Gamma = V(n+1,M) \cap \Gamma,
$$
which proves the first statement. The second statement then follows since $\mathrm{supp}(P_a) = \{e\}$ and $V(n, \{e\}) = V(n)$, and
the equality $S^n(Q)|_{V(n,F)}=S^n(Q|_F)$ follows directly from (E3).
\end{proof}
By Proposition \ref{Prop-SupportFormula}, studying the support growth of $S$ thus reduces to the study of the growth of the subsets $V(n,M) \subset G$ which we refer to as \emph{support sets}. We thus record some elementary properties of these support sets:
\begin{lemma}\label{Lem-Prop-V(n,M)} 
Let $\gamma\in\Gamma$, $M, M' \subset \Gamma$ be finite non-empty sets and $n,m\in\NN$. Then the following hold:
\begin{itemize}
\item[(a)] $V(n,\gamma M)=D^n(\gamma)V(n,M)$,
\item[(b)] If $M \subset M'$, then $V(n,M)\subseteq V(n,M')$,
\item[(c)] $V(n,V(m,M) \cap \Gamma )=V(n+m,M)$,
\item[(d)] $V(n-1)\cap\Gamma\subseteq V(n)\cap\Gamma$.
\end{itemize}
\end{lemma}

\begin{proof}
Each of the statements follows by a simple induction argument. Here are the details. 

\item (a) For $n=1$, the identities 
$$
V(1,\gamma M) = D\big((\gamma M\cap\Gamma)V\big)
	= D(\gamma) D\big((M\cap\gamma^{-1}\Gamma)V\big)
	= D(\gamma) D\big((M\cap\Gamma)V\big)
$$
follow as $D$ is an automorphism and $\gamma\in\Gamma$. For general $n \in \NN$, we obtain
\begin{align*}
V(n+1,\gamma M) = &D\big((V(n,\gamma M)\cap\Gamma)V\big)
	= D\big((D^n(\gamma)V(n,M)\cap\Gamma)V\big)\\
	= &D^{n+1}(\gamma) D\big((V(n,M)\cap D^n(\gamma)^{-1}\Gamma)V\big)
	= D^{n+1}(\gamma) D\big((V(n,M)\cap \Gamma)V\big)
\end{align*}
using the induction hypothesis, that $D$ is an automorphism and $D^n(\Gamma)\subseteq\Gamma$.

\item (b) The equalities $V(1,M)= D(MV)\subseteq D(M'V) = V(1,M')$ prove the case $n=1$. Then the induction hypothesis leads to
$$
V(n+1,M) = D\big((V(n,M)\cap\Gamma)V\big) \subseteq D\big((V(n,M')\cap\Gamma)V) = V(n+1,M').
$$

\item (c) We first prove the statement for $m=1$ by an induction over $n\in\NN$. For $n=1$, the equality 
$$
V(1,V(1,M)\cap\Gamma) = D\big((V(1,M)\cap\Gamma)V\big) = V(2,M)
$$
follows by definition. Then the induction hypothesis $V(n,V(1,M))\cap\Gamma=V(n+1,M)\cap\Gamma$ implies
$$
V\big(n+1,V(1,M)\cap\Gamma\big) = D\big(\big( V\big(n,V(1,M)\cap\Gamma\big)\cap\Gamma\big) V \big)
	= D\big( (V(n+1,M)\cap\Gamma) V\big)
	= V(n+2,M).
$$
Having the statement for $m=1$, we conclude the desired identity for any $m\in\NN$ by induction. Specifically, the induction step follows from
$$
V(n+m+1,M) = V\big(n+m, V(1,M)\cap\Gamma \big)
	= V\big(n, V\big(m,V(1,M)\cap\Gamma \big) \cap\Gamma \big)
	= V\big(n, V(m+1,M)\cap\Gamma \big).
$$

\item (d) For $n=1$, we get $V(1)\cap\Gamma = D\big((V\cap\Gamma)V\big)\cap\Gamma \supseteq \{e\} = V\cap\Gamma$. If it holds for $n \geq 1$, then
$$
V(n+1) \cap\Gamma = D\big( (V(n)\cap\Gamma)V\big) \cap\Gamma
	\supseteq D\big( (V(n-1)\cap\Gamma) V\big)\cap\Gamma
	= V(n)\cap\Gamma
$$
proves the desired result.
\end{proof}
\begin{no}We are going to compare the support sets to suitable balls in $G$; for this we need to fix a few parameters. Since $V$ is bounded, there exist $r_+ > 0$ such that
\begin{equation}\label{OuterRadius}
V \subset B(e, r_+);
\end{equation}
we refer to $r_+$ an \emph{outer radius} for $V$. Secondly, we have assumed that $\lambda_0$ is sufficiently large. We may thus fix parameters
\( C_- > 0 \), \( s \in \NN_0  \), and \( z \in \Gamma \) such that for all \( n \in \NN_0 \) we have
\begin{equation}\label{SuffLargeConvenient}
D^n\big( B(z, C_-) \big)  \subseteq V_{\lambda_0}(s+n).
\end{equation}
The set $V(s)$ is again bounded, hence contained in $B(z, R_+)$ for some $R_+ \geq r_+$. We now define
\[
C_+ := R_+ +\tfrac{r_+\lambda_0}{\lambda_0-1}>0.
\]
\end{no}
\begin{proposition}[Support growth]\label{Prop-SupportGroth} 
For all \( n \in \NN_0 \) we have
\[
D^n(z) B\big(e, C_-\lambda_0^n\big) \big) \subseteq V(s+n) \subseteq D^n(z) B\big(e, C_+\lambda_0^n\big).
\]
\end{proposition}
\begin{proof}[Proof] Since $D=D_{\lambda_0}$ is a dilation and $d_G$ is left-invariant we have
\[
D(z) B\big(e, r\lambda_0\big) 
	= B\big(D(z) , r\lambda_0\big)
	= D\big(B(z, r)\big).
\]
Choosing $r := C_-$ then yields the first inclusion in view of \eqref{SuffLargeConvenient}. Towards the second inclusion we prove by induction that
\begin{equation}\label{Vs+nUpper}
	V(s+n) \subseteq B\big(D^n(z),\lambda_0^{n} (R_+ + r_+S_{n-1})\big)\quad \text{for all } n\in\NN_0,
\end{equation}
where $S_{-1} = 0$ and $S_n:=\sum_{k=0}^n \lambda_0^{-k}$ for all $n \in \NN_0$. Indeed, for $n = 0$ we have
\[
V(s+0) = V(s) \subseteq B(z, R_+) = B\big(D^0(z),\lambda_0^{0} (R_+ + r_+S_{-1})\big).
\]
Now if \eqref{Vs+nUpper} holds for some $n \in \NN_0$, then Lemma~\ref{Lem-NaiveGrowth} and \eqref{eq:Inner-outer_approx-balls} yield
\[
V(s+n)_\Gamma \subseteq  B\big(D^n(z),\lambda_0^{n} (R_+ + r_+S_{n-1})\big) \subseteq  B\big(D^n(z),\lambda_0^{n} (R_+ + r_+S_{n-1}) + r_+\big).
\]
Applying the dilation $D$ then yields
\begin{align*}
V(s+n+1)
	= D\big(V(s+n)_\Gamma \big)
	\subseteq &B\left(D^{n+1}(z),\lambda_0^{n+1} \big(R_+ + r_+S_{n-1} + r_+\tfrac{1}{\lambda_0^n}\big)\right)\\
	= &B\big(D^{n+1}(z),\lambda_0^{n+1} (R_+ + r_+S_{n})\big).
\end{align*}
This finishes the induction and establishes \eqref{Vs+nUpper}. Now since $\lambda_0>1$ we have $S_n \nearrow \tfrac{\lambda_0}{\lambda_0-1}$, hence we conclude from \eqref{Vs+nUpper} that
\[
	V(s+n) \subseteq B\big(D^n(z),\lambda_0^{n} (R_+ + r_+S_{n-1})\big)
		\subseteq B\big(D^n(z),\lambda_0^{n} (R_+ + \tfrac{r_+\lambda_0}{\lambda_0-1})\big).
		\qedhere
\]
\end{proof}
\begin{remark}
In the particular case that \(z = e\) (which can always be arranged if $V$ contains an identity neighbourhood), Proposition~\ref{Prop-SupportGroth} implies that the support sets \(V(n)\) eventually contain every bounded subset of $G$. If $z \neq e$, then this cannot be guaranteed, since
the set $V(n)$ may drift further away from $e$. However, they still contain a translate of every bounded subset of $G$, which is sufficient for our purposes. Alternatively, by the following corollary, we can start from a slightly larger finite patch to ensure that the corresponding support sets contain every bounded subset.
\end{remark}
\begin{corollary}
\label{Cor-SupportGrowth}
If $F_0:=z^{-1}V(s) \cap \Gamma$, then for all \( n \in \NN_0 \), 
\[
B\big(e, C_-\lambda_0^n\big) \big) \subseteq V(n,F_0) \subseteq B\big(e, C_+\lambda_0^n\big).
\]
\end{corollary}

\begin{proof} This follows from Proposition~\ref{Prop-SupportGroth} since by Lemma~\ref{Lem-Prop-V(n,M)}(a) the inclusions
\[
D^n(z) B\big(e, C_-\lambda_0^n\big) \big) \subseteq V(s+n) \subseteq D^n(z) B\big(e, C_+\lambda_0^n\big)
\]
are equivalent to the inclusions
\[
 B\big(e, C_-\lambda_0^n\big) \big) \subseteq V\big(n,z^{-1}V(s)\cap\Gamma\big) \subseteq B\big(e, C_+\lambda_0^n\big).\qedhere
\]
\end{proof}

\section{Substitution dynamical systems}
\label{Sec-SubstDynamSyst}

In this section we associate with every substitution datum a subshift; we show that these subshifts are always non-empty and provide conditions which imply that they are minimal, uniquely ergodic and weakly aperiodic. Throughout we fix a dilation datum $\Dd=(G, d, (D_\lambda)_{\lambda >0}, \Gamma, V)$ and a substitution datum $\Ss=(\Aa,\lambda_0,S_0)$ over $\Dd$ and denote by $S: \Aa^\Gamma \to \Aa^\Gamma$ the associated substitution map. We also abbreviate $D := D_{\lambda_0}$.

\subsection{Existence of legal fixpoints}
\label{Sec-ExisLegalFixPoints}
\begin{construction}[Substitution system]\label{Def-SubstitutionSystem} Given two patches $P, Q \in \mathcal{A}_\Gamma^{**}$ we say that \emph{$P$ occurs in $Q$} and write $P \prec Q$ if there exists a $\gamma\in\Gamma$ such that $\gamma P=Q|_{\gamma \mathrm{supp}(P)}$. We recall that every $a \in \Aa$ defines a patch $P_a$ with  $\mathrm{supp}(P_a) = \{e\}$ and $P_a(e) = a$. We then say that a finite patch $P \in \mathcal{A}_\Gamma^*$ is \emph{$S$-legal} if $P \prec S^n(P_a)$ for some $a \in \Aa$. Finally, a configuration $\omega \in \Aa^\Gamma$ is called \emph{$S$-legal} if for every $M \subset \Gamma$ the finite path $\omega|_M$ is $S$-legal. We then define $\Omega(S) \subset \Aa^\Gamma$ by
\[
\Omega(S) := \{\omega \in \mathcal A^\Gamma \mid \omega \text{  is $S$-legal}\}.
\]
\end{construction}
\begin{proposition}\label{Prop-Subshift}
The subset $\Omega(S) \subset \Aa^\Gamma$ is an $S$-invariant subshift.
\end{proposition}
\begin{proof} If a finite patch $P$ satisfies $P \prec S^n(P_a)$ for some $a \in \Aa$, then $S(P) \prec S^{n+1}(P_a)$ and $\gamma.P \prec S^n(P_a)$ for all $\gamma \in \Gamma$. This implies that $\Omega(S)$ is both $S$ and $\Gamma$-invariant.

\item If a sequence $(\omega_n)$ in $\Omega(S)$ converges to some $\omega \in \Aa^\Gamma$, then every patch of $\omega$ appears in $\omega_n$ for all sufficiently large $n$ and is thus $S$-legal. Thus $\Omega(S)\subset \Aa^\Gamma$ is closed, hence compact-metrizable.
\end{proof}
The proof of Proposition \ref{Prop-Subshift} did not use the hypothesis that $\lambda$ be $V$-sufficient. However, this hypothesis is crucially needed in the proof of the following non-triviality result:
\begin{proposition}
\label{Prop-LegalFixpoints} 
The subshift $\Omega(S)$ is non-empty and there exists $k \in \mathbb N$ such that $S^k$ has a fixpoint in $\Omega(S)$. 
\end{proposition}
\begin{proof} We first show that $\Omega(S)$ is non-empty: Let $\omega\in\Aa^\Gamma$ and define $\omega_n:=D^n(z^{-1})S^{n+s}(\omega)$ for $n\in\NN$. By compactness of $\Aa^\Gamma$, there exists a subsequence $\omega_{n_k}$ which converges to some $\rho \in \Aa^\Gamma$, and we claim that $\rho$ is $S$-legal.

Given $M \subset \Gamma$ we consider the corresponding $\rho$-patch $P := \rho|_M$; we have to show that $P$ is $S$-legal. Since $\lambda_0$ is $V$-sufficient there exist $z\in\Gamma$, $s\in\NN_0$ and $C_->0$ such that for all $n \in \NN$ we have
\[
B(e,C_-\lambda_0^n) \,\cap\, \Gamma \subseteq D^n(z^{-1}) V(n+s) \,\cap\, \Gamma = \mathrm{supp}\left( D^n(z^{-1})S^{n+s}(\omega|_{\{e\}}) \right),
\]
where the last equality is a consequence of Proposition~\ref{Prop-SupportFormula}.

Since $\lambda_0>1$, there is a $k_0\in\NN$ such that $M\subseteq B(e,C_-\lambda_0^{n_k})$ for all $k\geq k_0$. Furthermore, the convergence $\omega_{n_k}\to \rho$ in the product topology of $\Aa^\Gamma$ yields that there is a $k_1\geq k_0$ such that for all $k\geq k_1$ we have $\omega_{n_k}|_M=\rho|_M$ and hence
\[
P = \rho|_M
	= \omega_{n_k}|_M
	\prec \omega_{n_k}|_{B(e,C_-\lambda_0^{n_k})}
	\prec D^{n_k}(z^{-1})S^{n_k+s}(\omega_{n_k}|_{\{e\}}).
\]
If we now define $a := \omega_{n_{k_1}}(e) \in \Aa$, then we deduce that $P \prec S^{n_k+s}(P_a)$, hence $P$ is $S$-legal. This proves that $\Omega(S) \neq \emptyset$.

To see that some power of $S$ has a fixpoint we consider the subset $F_0:=z^{-1}V(s)\cap\Gamma\subseteq \Gamma$ from Corollary~\ref{Cor-SupportGrowth}; here $z\in\Gamma$ and $s\in\NN_0$ are parameters witnessing the $V$-sufficiency of $\lambda_0$. We now start from an arbitrary $\omega_0\in\Omega(S)$; since $\Aa$ and $F_0$ are finite, there are $k_0,k\in\NN$ such that $S^{k_0}(\omega_0)|_{F_0} = S^{k_0+k}(\omega_0)|_{F_0}$. Define $\omega_1:=S^{k_0}(\omega_0)$ and $\omega_{n+1}:=S^{k}(\omega_n)$ for all $n \geq 2$. We will show that $(\omega_n)_{n\in\NN}$ is convergent to some $\omega\in\Omega(S)$ and that $S^k(\omega)=\omega$.

By construction, we have $\omega_1|_{F_0}=\omega_n|_{F_0}$ for all $n\in\NN$. For all $m, n \in \NN$ we have $S^{mk}(\omega_n) = \omega_{n+m}$, and hence Proposition~\ref{Prop-SupportFormula} yields 
\begin{equation}\label{Fixpoint1}
\omega_{m+1}|_{V(mk,F_0)} 
	= S^{mk}(\omega_1)|_{V(mk,F_0)} 
	= S^{mk}(\omega_1|_{F_0}) 
	= S^{mk}(\omega_n|_{F_0}) 
	= \omega_{m+n}|_{V(mk,F_0)}.
\end{equation}
Now let $M\subseteq \Gamma$ be finite. By Corollary~\ref{Cor-SupportGrowth} there exists an $m_0\in\NN$ such that $M\subseteq V(mk,F_0)$ for $m\geq m_0$, hence \eqref{Fixpoint1} yields 
$$
\omega_{m_0+1}|_{M} = \omega_{m_0+n}|_{M}
	\qquad \text{for all $n\in\NN$}.
$$
By definition of the product topology this implies that $(\omega_n)$ converges to some $\omega \in \Omega(S)$, and by Proposition \ref{Prop-Subshift} we have $\omega \in \Omega(S)$, and by Proposition~\ref{Prop-SubstitutionProperties}(a) we have
\[
S^k(\omega) = \lim_{n\to\infty} S^k(\omega_n) = \lim_{n\to\infty} \omega_{n+1} =  \omega.\qedhere
\]
\end{proof}
%%%%%%%%%%%%%%%%%%%%%%%%%%%%%%%%%%%%%%%%%%%%%%%%%%%%%%%%%%%%%%%%%%%%%%
\subsection{From primitivity to linear repetitivity, minimality and unique ergodicity}
\label{Sec-LR+UniqErg} 
We have seen that there is a huge freedom in choosing a substitution datum over a given dilation datum, hence it is natural to ask whether one can choose substitution data with additional properties, which in turn ensure additional properties of the associated subshift.
The following notion is classical in the abelian setting:
\begin{definition} 
\label{Def-Primitive} 
A substitution datum $\Ss=(\mathcal{A}, \lambda_0, S_0)$ is called \emph{primitive} with \emph{exponent} $L \in \mathbb N$ if for all $a,b \in \mathcal A$ we have $P_a \prec S^L(P_b)$.
\end{definition}
\begin{example} The substitution given for the Heisenberg group in Example~\ref{Ex-HeisenbergSubstitution} is primitive with exponent $L=1$.
\end{example}
\begin{no} To spell out some of the main consequences of primitivity we recall that a subshift is called \emph{minimal} if every $\Gamma$-orbit is dense and \emph{uniquely ergodic} if it admits a unique $\Gamma$-invariant probability measure. Moreover, an element $\omega\in\Aa^\Gamma$ is called {\em repetitive} if, for each finite subpatch $P$ of $\omega$, there exists a radius $r>0$ such that $P\prec \omega|_{B(x,r)}$ for all $x\in \Gamma$. It is called {\em linearly repetitive} if there exists a constant $C>0$ such that for all $r\geq 1$ and each $x,y\in \Gamma$, we have $\omega|_{B(x,r)} \prec \omega|_{B(y,Cr)}$. By analogy with the abelian case we are going to establish:
\end{no}
\begin{theorem}[Consequences of primitivity]
\label{Thm-LinearRep} If $\Ss$ is primitive then the following hold:
\begin{enumerate}[(a)]
\item $\Omega(S)$ is minimal and every $\omega \in \Omega(S)$ is linearly repetitive.
\item If moreover $\Dd$ has exact polynomial growth, then $\Omega(S)$ is uniquely ergodic.
\end{enumerate}
\end{theorem}
\begin{no} The key step in the proof of Theorem \ref{Thm-LinearRep} is to find a single element $\omega \in \Omega(S)$ which is linearly repetitive; then (a) follows from general arguments and (b) is a consequence of the main result of the companion paper \cite{BHP25-LR}. The way in which primitivity is used is as follows: If $P \in \Aa_\Gamma^*$ is $S$-legal, then there exist $n_P \in \NN$ and $a_0 \in \Aa$ such that $P \prec S^{n_P}(P_{a_0})$. If now $\Ss$ is primitive with exponent $L$, then we deduce that
\begin{equation}\label{PatchOccurence} P \prec S^{n_P}(P_{a_0}) 
	\prec S^{n_P}\big(S^{n-n_P}(P_a)\big) 
	= S^{n}(P_a) \quad \text{for all }n \geq N_P := n_{p} + L.
	\end{equation}
\end{no}
\begin{lemma}\label{Lem-FixpointLR}
Let $k \in \NN$ and assume that $\omega \in \Omega(S)$ is an $S^k$-fixpoint. If $\Ss$ is primitive, then $\omega$ is linearly repetitive.
\end{lemma}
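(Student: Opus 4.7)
The plan is to exploit the \emph{supertile decomposition} induced by the fixpoint property. Since $S^k(\omega)=\omega$, we have $\omega = S^{nk}(\omega)$ for all $n\in\NN$. Applying Axioms (E2) and (E3) to the partition $\Gamma=\bigsqcup_{\eta\in\Gamma}\{\eta\}$ and using $\omega|_{\{\eta\}}=\eta\cdot P_{\omega(\eta)}$ yields
\[
\omega|_{D^{nk}(\eta)(V(nk)\cap\Gamma)} \;=\; D^{nk}(\eta)\cdot S^{nk}(P_{\omega(\eta)})\qquad(\eta\in\Gamma),
\]
and the union of these supports covers $\Gamma$. By Theorem~\ref{SGT}~(b), each such support lies in the ball $B(D^{nk}(\eta),C_+\lambda_0^{nk})$.

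Given $r\geq 1$ and $x\in\Gamma$, I would choose $n$ minimally with $\lambda_0^{nk}C_-\geq r$, so that $\lambda_0^{nk}\leq\lambda_0^k r/C_-$, and set
\[
E\;:=\;\{\eta\in\Gamma: D^{nk}(\eta)(V(nk)\cap\Gamma)\cap B(x,r)\neq\emptyset\}.
\]
Using left-invariance of $d$ together with the scaling $d(D^{nk}(g),D^{nk}(h))=\lambda_0^{nk}d(g,h)$, I would then check that $d(\eta_1,\eta_2)<2(C_++C_-)$ for all $\eta_1,\eta_2\in E$, so $E$ has diameter bounded by a universal constant $R_0$ independent of $r$. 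Consequently $\omega|_E$ is a finite $S$-legal subpatch of $\omega$ whose support sits in a ball of radius $R_0$, and $\omega|_{B(x,r)\cap\Gamma}\prec S^{nk}(\omega|_E)$ by Axiom (E3).

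Because $\Aa$ is finite and $\Gamma$ is uniformly discrete, there are only finitely many subpatches of $\omega$ on supports of diameter $\leq R_0$ up to $\Gamma$-translation. Applying Lemma~\ref{Lem-PatchOccurence} to each and taking the maximum of the resulting integers produces an $N^*\in\NN$ with $\omega|_E\prec S^m(P_a)$ for every $m\geq N^*$ and every $a\in\Aa$. Setting $m:=k\lceil N^*/k\rceil$, this combines with the previous step to yield
\[
\omega|_{B(x,r)\cap\Gamma}\;\prec\;S^{nk}(\omega|_E)\;\prec\;S^{nk+m}(P_a)\qquad(a\in\Aa).
\]
Since $nk+m$ is a multiple of $k$, $\omega$ remains an $S^{nk+m}$-fixpoint, so for every $\eta'\in\Gamma$ the level-$(nk+m)$ supertile $D^{nk+m}(\eta')\cdot S^{nk+m}(P_{\omega(\eta')})$ is a subpatch of $\omega$ supported inside $B(D^{nk+m}(\eta'),C_+\lambda_0^{nk+m})$.

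For any $y\in\Gamma$, the scaled lattice $D^{nk+m}(\Gamma)$ has covering radius $\lambda_0^{nk+m}R$ in $G$ (where $R$ is the covering radius of $\Gamma$), so I pick $\eta'\in\Gamma$ with $d(y,D^{nk+m}(\eta'))\leq\lambda_0^{nk+m}R$; the supertile at $\eta'$ then lies inside $B(y,(R+C_+)\lambda_0^{nk+m})\subseteq B(y,Cr)$ with the universal constant $C:=(R+C_+)\lambda_0^{k+m}/C_-$. Chaining this with the previous step gives $\omega|_{B(x,r)}\prec\omega|_{B(y,Cr)}$, which is the required linear repetitivity. The main obstacle is the diameter control of $E$: the argument crucially needs both the metric scaling under $D^{nk}$ and the calibrated choice $\lambda_0^{nk}\asymp r$, without which the number of level-$nk$ supertiles meeting $B(x,r)$ would grow with $r$ and the uniform bound $N^*$ from Lemma~\ref{Lem-PatchOccurence} could not be extracted.
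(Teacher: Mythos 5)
Your proof is correct and follows essentially the same route as the paper: a supertile decomposition of $\omega$ at a level calibrated so that $\lambda_0^{nk}\asymp r$, a uniform diameter bound on the set $E$ of supertiles meeting $B(x,r)$, finiteness of the resulting address patches up to translation, and Lemma~\ref{Lem-PatchOccurence} to place the ball patch inside every sufficiently high-level supertile, which is then located near $y$ by relative denseness of $D^{N}(\Gamma)$. The paper packages the final transport step slightly differently (it first establishes plain repetitivity at a fixed scale and moves the address patch horizontally before re-substituting), but the underlying mechanism is identical.
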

\begin{proof} 
\underline{Step 1:} We first show that $\omega$ is repetitive. For this let $P$ be a finite subpatch of $\omega$. By \eqref{PatchOccurence}, there exists an $m\in\NN$ such that $P\prec S^{mk}(P_a)$ for all $a\in\Aa$. Since $D^{mk}(\Gamma)\subseteq G$ is relatively dense there is an $r>0$ such that for each $x\in \Gamma$ there is an $\eta_x\in\Gamma$ such that $D^{mk}(\eta_x) V(mk) \subseteq B(x,r)$. By Lemma~\ref{Lem-Prop-V(n,M)} we have $D^{mk}(\eta_x) V(mk) = V(mk,\{\eta_x\})$. Proposition~\ref{Prop-SupportFormula} implies for all $x \in \Gamma$,
$$
P \prec S^{mk}(P_{\omega(\eta_x)}) 
	= S^{mk}(\omega)|_{V(mk,\{\eta_x\})}
	= \omega|_{D^{mk}(\eta_x) V(mk)}
	\prec \omega|_{B(x,r)}.
$$
Since $r$ is independent of $x$, this shows that $\omega$ is repetitive.

\underline{Step 2:} We now upgrade this statement to linear repetitivity. We start by fixing some constants depending on our substitution datum $\Ss=(\Aa,\lambda_0,S_0)$ and the underlying dilation datum $\Dd=(G, d_G, (D_\lambda)_{\lambda > 0}, \Gamma, V)$. As before we use the notation $D=D_{\lambda_0}$.

Let $r_+>0$ be chosen such that $V \subseteq B(e,r_+)$.
Since $\lambda_0>1$ is sufficiently large relative to $V$, there is a $C_->0$, an $s\in\NN_0$ and a $z\in\Gamma$ such that $D^n\big(B(z,C_-)\big)\subseteq V(s+n)$ for all $n\in\NN_0$.
Let $R_+>0$ be such that $V(s)\subseteq B(z,R_+)$ and define $C_+:=R_+ +\tfrac{r_+\lambda_0}{\lambda_0-1}>0$ as in Proposition~\ref{Prop-SupportGroth}.
Define $F_0:=z^{-1}V(s)\cap\Gamma$ like in Corollary~\ref{Cor-SupportGrowth}.

Since $\lambda_0>1$, there exists an $i\in\mathbb{N}$ such that
$$
\lambda_0^i  \geq 2 \frac{r_+}{C_-}.
$$
Since $F_0$ is finite, there is a radius $r(i)>0$ such that $V(i,F_0)\subseteq B(e,r(i))$. Define $r_0:=\max\{r_+, r(i)\}$. With these choices, we observe that the set of patches
\[
\Pp_0:= \big\{ (\gamma\omega)|_{B(e,r)} \,:\, \gamma\in\Gamma, 0<r\leq r_0 \big\}
\]
is finite since \(\Aa\) and the set \(\{ B(e,r)\cap\Gamma \mid 0<r\leq r_0 \}\) are finite by uniform discreteness of \(\Gamma\).

Then the repetitivity of $\omega$ (Step 1) yields the existence of a constant $C_1\geq 1$ such that
\begin{equation}
\label{eq:LR_Start}
\omega|_{B(x, r)} \prec \omega|_{B(y,C_1)},
	\qquad x,y\in \Gamma,\, 0<r\leq r_0.
\end{equation}
Due to Corollary~\ref{Cor-SupportGrowth}, there is a $j\in\NN$ such that $B(e,C_1)\subset V(j,F_0)$. Define
$$
C:= \max\left\{ C_1, \lambda_0^{j+k}\frac{C_+}{r_+} + \lambda_0^k \right\} \geq 1.
$$
If $1\leq r\leq r_+$, we conclude 
$$
\omega_{B(x,r)} \prec \omega|_{B(y,C_1)} \prec \omega|_{B(y,Cr)},
	\qquad x,y\in \Gamma.
$$
Since $\lambda_0>1$, we have
$$
\bigsqcup_{n\in\NN} \big[\lambda_0^{k(n-1)} r_+, \lambda_0^{kn} r_+\big) = [r_+,\infty).
$$
Let $r>r_+$. By the previous considerations, there is a unique $n_0\in\NN$ such that 
\begin{equation}
\label{eq:LR_SymbSubst}
\lambda_0^{k(n_0-1)} r_+ \leq r < \lambda_0^{kn_0} r_+.
\end{equation}
Let $x,y\in \Gamma$. Since $G=D^{kn_0}(\Gamma)D^{kn_0}(V)$, there are $\eta_x,\eta_y\in\Gamma$ and $v_x,v_y\in V$ such that
$$
x= D^{kn_0}(\eta_x)D^{kn_0}(v_x)
	\quad\text{ and }\quad
y= D^{kn_0}(\eta_y)D^{kn_0}(v_y).
$$
Since $V\subseteq B(e,r_+)$ and $D^{kn_0}(B(e,s))=B\big(e,\lambda_0^{kn_0}s\big)$, we conclude
$$
x\in B\big(D^{kn_0}(\eta_x),\lambda_0^{kn_0}r_+\big)
	\quad\text{ and }\quad
D^{kn_0}(\eta_y) \in  B\big(y,\lambda_0^{kn_0}r_+\big).
$$
Due to \eqref{eq:LR_SymbSubst} and $B(z,r)B(e,s) \subset B(z,r+s)$, we first observe
$$
B(x,r) 
	\subset B\big(D^{kn_0}(\eta_x),\lambda_0^{kn_0}r_+\big) B(e,r) 
	\subset B\big(D^{kn_0}(\eta_x),r + \lambda_0^{kn_0}r_+\big)
	\subset
	B\big(D^{kn_0}(\eta_x),\lambda_0^{kn_0} 2r_+\big)
	.
$$
By the choice of $i\in\mathbb{N}$, we derive
$$
\lambda_0^{kn_0} 2r_+ 
	= \lambda_0^{kn_0} 2 \frac{r_+}{C_-} C_-
	\leq  \lambda_0^{kn_0+i} C_-.
$$
In addition, the inclusion $B\big(e,\lambda_0^{kn_0+i}C_-\big) \subseteq V(kn_0+i,F_0)$ holds by Corollary~\ref{Cor-SupportGrowth}. 
Hence, the previous considerations and Lemma~\ref{Lem-Prop-V(n,M)}~(a) and (c) lead to
$$
B(x,r) 
	\subset D^{kn_0}(\eta_x) B(e,\lambda_0^{kn_0+i}C_-)
	\subset D^{kn_0}(\eta_x) V(kn_0+i,F_0)
	= V\big(kn_0, \eta_x(V(i,F_0)\cap\Gamma)\big).
$$
Thus, $S^{kn_0}(\omega)=\omega$ and Proposition~\ref{Prop-SupportFormula} yield
$$
\omega|_{B(x,r)}
	\prec S^{kn_0}(\omega)|_{V\big(kn_0, \eta_x(V(i,F_0)\cap\Gamma)\big)}
	= S^{kn_0}\left( \omega|_{\eta_x(V(i,F_0)\cap\Gamma)}\right).
$$
Hence, we conclude from $V(i,F_0)\subseteq B(e,r(i))$, \eqref{eq:LR_Start} and $B(e,C_1)\subset V(j,F_0)$ that
$$
\omega|_{\eta_x(V(i,F_0)\cap\Gamma)} 
	\prec \omega|_{B(\eta_x,r(i))\cap\Gamma} 
	\prec \omega|_{B(\eta_y,C_1)\cap\Gamma}
	\prec \omega|_{\eta_y (V(j,F_0)\cap\Gamma)}.
$$
If we combine this with the previous consideration, Proposition~\ref{Prop-SupportFormula} implies
$$
\omega|_{B(x,r)}
	\prec S^{kn_0}\left( \omega|_{\eta_x(V(i,F_0)\cap\Gamma)}\right)
	\prec S^{kn_0}\left( \omega|_{\eta_y V(j,F_0) \cap \Gamma}\right)
	= S^{kn_0}(\omega)|_{V(kn_0, \eta_y (V(j,F_0)\cap\Gamma))}.
$$
Focusing again on the support, Lemma~\ref{Lem-Prop-V(n,M)}~(a) and (c) lead to
$$
V\big(kn_0, \eta_y (V(j,F_0)\cap\Gamma)\big)
	= D^{kn_0}(\eta_y) V\big(kn_0,V(j,F_0)\cap\Gamma\big)
	= D^{kn_0}(\eta_y) V(kn_0+j,F_0).\\
$$
Then Corollary~\ref{Cor-SupportGrowth} and $D^{kn_0}(\eta_y) \in  B(y,\lambda_0^{kn_0}r_+)$ yield
\begin{align*}
V\big(kn_0, \eta_y (V(j,F_0)\cap\Gamma)\big)
	= D^{kn_0}(\eta_y) V\big(kn_0+j, F_0 \big)
	\subseteq &B(y,\lambda_0^{kn_0}r_+) B\big(e,\lambda_0^{kn_0+j} C_+\big)\\
	\subseteq &B\big(y,\lambda_0^{kn_0}(\lambda_0^jC_+ + r_+)\big),
\end{align*}
using again $B(z,r)B(e,s) \subset B(z,r+s)$. Then \eqref{eq:LR_SymbSubst} and the choice of $C$ give us the estimate
$$
\lambda_0^{kn_0}(\lambda_0^jC_+ + r_+)
	= \left( \lambda_0^k\frac{\lambda_0^jC_+ + r_+}{r_+} \right) \lambda_0^{k(n_0-1)} r_+ 
	\leq Cr,
$$
implying $V\big(kn_0, \eta_y (V(j,F_0)\cap\Gamma)\big) \subset B(y,Cr)$. Hence, we finally conclude
$$
\omega|_{B(x,r)}
	\prec S^{kn_0}(\omega)|_{V(kn_0, \eta_y (V(j,F_0)\cap\Gamma))}
	\prec \omega|_{B(y,Cr)},
$$
using $S^{kn_0}(\omega)=\omega$. This proves that $\omega$ is linearly repetitive.
\end{proof}

\begin{no}\label{DynamicalRep} It follows from Proposition \ref{Prop-LegalFixpoints} and Lemma \ref{Lem-FixpointLR} that if $\Ss$ is primitive, then $\Omega(S)$ contains a linearly repetitive configuration. To deduce Theorem \ref{Thm-LinearRep}(a) we need to discuss dynamical characterizations of repetitivity. Given an element $\omega \in \Aa^\Gamma$ we denote by 
\[
\Omega_\omega :=  \overline{\{\gamma \omega \mid \gamma \in \Gamma\}} \subset \Aa^\Gamma
\] 
the orbit closure of $\omega$. By definition of the product topology we have $\omega' \in \Omega_\omega$ if and only if $\omega'|_F \prec \omega$ for every finite subset $F \subset \Gamma$, i.e.\ all finite patches of $\omega'$ appear in $\omega$.
In particular, a subshift $\Omega$ is minimal if and only if all of its elements have exactly the same finite patches up to translation. Since the definition of (linear) repetitivity involves only finite patches, this implies that
if one element of a minimal subshift is (linearly) repetitive, then all of its elements are (linearly) repetitive. 
\end{no}
\begin{lemma}\label{Lemma_Minimal} A configuration $\omega \in \Aa^\Gamma$ is repetitive if and only if the orbit closure $\Omega_\omega$ is minimal.
\end{lemma}
\begin{proof} For $\Gamma$ abelian this is proved in \cite[Prop.\ 4.3]{BaakeGrimm13}. However, abelianness of $\Gamma$ is never used in the proof, see also \cite[Prop.~2.4]{BHP25-LR}. 
\end{proof}
\begin{proof}[Proof of Theorem \ref{Thm-LinearRep}](a) Using Proposition \ref{Prop-LegalFixpoints} we choose $k \in \NN$ and $\omega\in \Omega(S)$ such that $S^k(\omega) = \omega$. We claim that $\Omega(S) = \Omega_\omega$. Indeed, the inclusion $\supseteq$ follows from Proposition \ref{Prop-Subshift}. For the converse we observe that if $P$ is any finite $S$-legal patch, then \eqref{PatchOccurence}
and Proposition~\ref{Prop-SupportFormula} imply that there is an $m\in\NN$ such that 
$$
P\prec S^{mk}\big(P_{\omega(e)}\big)
	\prec S^{mk}(\omega)|_{V(mk)}
	= \omega|_{V(mk)}.
$$
Thus, every $S$-legal patch appears in $\omega$, and hence $\Omega(S) \subseteq \Omega_\omega$, which proves the claim. We then deduce with Lemma \ref{Lemma_Minimal} that $\Omega(S)$ is minimal and with \S \ref{DynamicalRep} that every element of $\Omega(S)$ is linearly repetitive.

\item (b) In the terminology of \cite{BHP25-LR}, every element in $\Omega(S)$ is symbolically linearly repetitive with respect to $d|_{\Gamma \times \Gamma}$. 
By assumption, $\Gamma$ has exact polynomial growth with respect to $d|_{\Gamma \times \Gamma}$ and hence unique ergodicity of $\Gamma \curvearrowright \Omega(S)$ follows from
\cite[Theorem~5.7~(b)]{BHP25-LR}.
\end{proof}

\subsection{A criterion for weak aperiodicity}
\label{Sec-WeakAper}
In this subsection we provide a sufficient condition on the subshift $\Omega(S)$ to be weakly aperiodic. As we will see later, this condition can be arranged for every given substitution group. 
\begin{definition}
\label{Def-S_NonPeriodic}
The substitution datum $\Ss$ is called \emph{non-periodic} if $S_0$ is injective and
\[
\big(\gamma^{-1} S(P_a)\big)|_{\gamma^{-1}D(V)\cap D(V)}\neq S(P_b)|_{\gamma^{-1}D(V)\cap D(V)}
\]
for all $\gamma\in \big( D(V)\cap\Gamma \big)\setminus\{e\}$ and $a,b\in\mathcal{A}$.
\end{definition}
The term ``non-periodic'' is justified by the following main result of this subsection:
\begin{theorem} 
\label{Thm-weakAP-ex} 
If $\Ss$ is non-periodic and $\omega \in \Omega(S)$ is an $S^k$-fixpoint for some $k\in\mathbb{N}$, then $\mathrm{Stab}_{\Gamma}(\omega) = \{e\}$. In particular, 
$\Omega(S)$ is weakly aperiodic.
\end{theorem} 
\begin{remark} 
Definition \ref{Def-S_NonPeriodic} and Theorem \ref{Thm-weakAP-ex} are inspired by results about geometric (i.e.\ non-symbolic) substitutions in the Euclidean setting. Indeed, it is well-known that injectivity of the substitution map leads to weakly aperiodic tilings in this setting \cite{Sol97,Sol98,AnPu98}; however, to carry this argument through in the symbolic setting, injectivity of $S_0$ is not enough. While there are other methods to establish weak aperiodicity of substitution systems in the abelian setting (notably via the existence of so-called proximal pairs, see \cite{BarOl14,BaakeGrimm13}), the more geometric criterion from Theorem \ref{Thm-weakAP-ex} seems to be new even in the abelian case.

As mentioned in the introduction, in the abelian case weak aperiodicity of $\Omega(S)$ can be strengthened into strong aperiodicity if $\Omega(S)$ is minimal. While this arguments breaks down in the non-abelian case, it is still possible to obtain strong aperiodicity of $\Omega(S)$ in some cases of interest; we will return to this problem in Section \ref{Sec-StrongAper}.
\end{remark}
\begin{no}
Our proof of Theorem~\ref{Thm-weakAP-ex} builds on two key observations. We first show in Lemma~\ref{lemma-injectivity} below that injectivity of $S_0$ implies that
$S^n:\mathcal{A}^{\Gamma} \to \mathcal{A}^{\Gamma} $ is injective for all $n \in \NN$, and that this injectivity is witnessed by very specific elemens of $\Omega(S)$. 
We then use this result to show that the stabilizers \(\mathrm{Stab}_\Gamma(S^n(\omega))\) shrink as \(n\) increases (see Proposition~\ref{prop-few fixedpoints}).
We recall from \S \ref{VDef} that for finitee $M \subseteq \Gamma$ and $n \in \NN_0$ the sets $V(n,m)$ are defined as
\[
V(0,M) := MV \quad \mbox{ and } \quad V(n,M) := D\big( V(n-1, M)_{\Gamma} \big).
\]
As before we abbreviate $V(n) := V(n,\{e\})$ for all $n \in \NN_{0}$; then our first key result reads:
\end{no}
\begin{lemma} \label{lemma-injectivity}

If the substitution $S_0:\mathcal{A}\to\mathcal{A}^{D(V)\cap\Gamma}$ is injective, then $S^n:\mathcal{A}^\Gamma\to\mathcal{A}^\Gamma$ is injective for all $n\in\NN$. More precisely, if $\omega_1(\eta)\neq \omega_2(\eta)$ for $\eta\in\Gamma$, then 
\[
S^n(\omega_1)|_{V(n,\{\eta\})}
	\neq S^n(\omega_2)|_{V(n,\{\eta\})} 
	\quad\quad \mbox{ for all} \quad n \in \NN.
\]
\end{lemma}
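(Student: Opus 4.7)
The plan is to prove the more precise quantitative statement by induction on $n \in \NN$; the bare injectivity of $S^n$ is then an immediate consequence, as any two distinct elements of $\mathcal{A}^\Gamma$ differ at some $\eta \in \Gamma$. The key conceptual point is that the recursive identity $V(n+1,\{\eta\}) = V(1, V(n,\{\eta\})\cap\Gamma)$ from Lemma~\ref{lemmaV}(c) mirrors precisely the recursion $S^{n+1} = S \circ S^n$, so that one step of the induction corresponds exactly to one layer of ``locally defined'' substitution.

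For the base case $n=1$, I would unfold the definition of $S$ on the singleton patch $\omega|_{\{\eta\}} = \eta \cdot P_{\omega(\eta)}$. Applying (E1) and (E2) of Proposition~\ref{Prop-SubstitutionAxioms} gives $S(\eta \cdot P_{\omega(\eta)}) = D(\eta) \cdot S_0(\omega(\eta))$, a patch supported on $D(\eta)(D(V) \cap \Gamma) = V(1,\{\eta\}) \cap \Gamma$. Axiom (E3), applied to the decomposition of $\Gamma$ into singletons, then yields the explicit formula
\[
S(\omega)(\gamma) = S_0(\omega(\eta))(D(\eta)^{-1}\gamma) \qquad \text{for all } \gamma \in V(1,\{\eta\}) \cap \Gamma.
\]
Thus the restriction $S(\omega)|_{V(1,\{\eta\}) \cap \Gamma}$ depends on $\omega$ only through the letter $\omega(\eta)$ and is a $D(\eta)$-translate of the patch $S_0(\omega(\eta))$. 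Since $S_0$ is injective by assumption, the map $a \mapsto D(\eta)\cdot S_0(a)$ is injective as well, so $\omega_1(\eta) \neq \omega_2(\eta)$ forces $S(\omega_1)$ and $S(\omega_2)$ to disagree at some point of $V(1,\{\eta\}) \cap \Gamma$.

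For the inductive step, assume the claim for $n$ and suppose $\omega_1(\eta) \neq \omega_2(\eta)$. The induction hypothesis supplies a point $\zeta \in V(n,\{\eta\}) \cap \Gamma$ with $S^n(\omega_1)(\zeta) \neq S^n(\omega_2)(\zeta)$. Applying the base case to the pair $(S^n(\omega_1), S^n(\omega_2))$ at $\zeta$, there is some $\gamma \in V(1,\{\zeta\}) \cap \Gamma$ on which $S^{n+1}(\omega_1)$ and $S^{n+1}(\omega_2)$ disagree. Combining parts (b) and (c) of Lemma~\ref{lemmaV} we get
\[
V(1,\{\zeta\}) \;\subseteq\; V(1,\, V(n,\{\eta\}) \cap \Gamma) \;=\; V(n+1,\{\eta\}),
\]
so $\gamma \in V(n+1,\{\eta\}) \cap \Gamma$, closing the induction.

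There is no real obstacle here beyond careful bookkeeping: one must keep track of the two intertwined recursions and recognize that (E3) is precisely what allows one to evaluate $S(\omega)$ on $V(1,\{\eta\}) \cap \Gamma$ using only the letter $\omega(\eta)$. Once that is observed, both the base case and the inductive step reduce to direct substitution.
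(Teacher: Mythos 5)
Your proof is correct and follows essentially the same route as the paper: induction on $n$, with the base case extracting a point of disagreement in $D(\eta)(D(V)\cap\Gamma)=V(1,\{\eta\})\cap\Gamma$ from the injectivity of $S_0$, and the inductive step feeding the disagreement point $\zeta\in V(n,\{\eta\})\cap\Gamma$ back through one more layer of $S$, using the recursion $V(1,V(n,\{\eta\})\cap\Gamma)=V(n+1,\{\eta\})$ from Lemma~\ref{lemmaV}. The only (harmless) stylistic difference is that you invoke the already-proven case $n=1$ for the pair $(S^n(\omega_1),S^n(\omega_2))$, where the paper repeats the injectivity-of-$S_0$ argument inline.
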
 

\begin{proof}
We first show that $S$ is injective, thus settling the case $n=1$. So fix some distinct $\omega_1, \omega_2 \in \mathcal{A}$ along with $\eta \in \Gamma$ such that $\omega_1(\eta) \neq  \omega_2(\eta)$. By injectivity of $S_0$ there must be some $\tilde{\gamma} \in D(V) \cap \Gamma$ such that $S_0(\omega_1(\eta))(\tilde{\gamma}) \neq S_0(\omega_2(\eta))(\tilde{\gamma})$. Set $\gamma := D(\eta)\tilde{\gamma}$. 
The construction of the substitution map $S$ (see Construction~\ref{Const-ExtensionRule}) leads to 
\[
S(\omega_1)(\gamma) = S_0(\omega_1(\eta))(\tilde{\gamma}) \neq  S_0(\omega_2(\eta))(\tilde{\gamma}) = S(\omega_2)(\gamma).
\]
Hence, $S$ is injective. Since $\gamma \in D(\eta)D(V) = V(1,\{\eta\})$, we find $S(\omega_1)|_{V(1,\{\eta\})} \neq S(\omega_2)|_{V(1,\{\eta\})}$. This settles the case $n=1$. 
	
Proceeding by induction, suppose that the assertion of the lemma is true for some $n\geq 1$. Then $S^{n+1} = S \circ S^{n}$ is clearly injective as a composition of two injective maps. Further fix some distinct $\omega_1, \omega_2 \in \mathcal{A}$ along with $\eta \in \Gamma$ such that $\omega_1(\eta) \neq  \omega_2(\eta)$. By the induction hypothesis we find some $\tilde{\eta} \in V(n,\{\eta\}) \cap \Gamma$ such that $S^n(\omega_1)(\tilde{\eta}) \neq S^n(\omega_2)(\tilde{\eta})$. By injectivity of $S_0$ we find some $\tilde{\gamma} \in D(V) \cap \Gamma$ such that $S_0(S^n(\omega_1)(\tilde{\eta}))(\tilde{\gamma}) \neq S_0(S^n(\omega_2)(\tilde{\eta}))(\tilde{\gamma})$. Setting $\gamma:= D(\tilde{\eta}) \tilde{\gamma}$, we obtain in the same manner as in the proof for $n=1$ that $S^{n+1}(\omega_1)(\gamma) \neq S^{n+1}(\omega_2)(\gamma) $. The observation that $\gamma \in D\big( (V(n,\{\eta\}) \cap \Gamma) V \big) = V(n+1, \{\eta\})$ now finishes the proof. 
\end{proof}

We now turn to the second key result:
\begin{proposition} 
\label{prop-few fixedpoints}
Let $\Dd$ be a dilation datum and $\Ss$ a substitution datum with substitution map $S$.
If $\Ss=(\mathcal{A}, \lambda_0, S_0)$ is non-periodic, then for each $n\in\NN$, every $\gamma\in\Gamma\setminus D^n(\Gamma)$ and all $\omega_1, \omega_2 \in\mathcal{A}^\Gamma$, we have 
	$$
	\gamma^{-1}S^n(\omega_1)|_{V(n)} \neq S^n(\omega_2)|_{V(n)}.
	$$
\end{proposition}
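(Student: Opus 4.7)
The natural plan is to proceed by induction on $n$, using the non-periodicity axiom directly for the base case and combining injectivity (Lemma~\ref{lemma-injectivity}) with $D$-equivariance of $S$ for the inductive step.

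\textbf{Base case $n=1$.} Given $\gamma \in \Gamma \setminus D(\Gamma)$, I will use \eqref{FundDomainIterate} (which says $D(V)$ is a fundamental domain for the left action of $D(\Gamma)$ on $G$) to write $\gamma = D(\eta)\tilde{\gamma}$ uniquely with $\eta \in \Gamma$ and $\tilde\gamma \in D(V) \cap \Gamma$. Since $\gamma \notin D(\Gamma)$, we have $\tilde\gamma \neq e$. Apply non-periodicity (Definition~\ref{def-Sadmissible}) with this $\tilde\gamma$ to the letters $a := \omega_1(\eta)$ and $b := \omega_2(e)$. This furnishes some $g \in \tilde\gamma^{-1}D(V) \cap D(V) \cap \Gamma$ such that $S_0(a)(\tilde\gamma g) \neq S_0(b)(g)$. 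Note $\tilde\gamma g \in D(V) \cap \Gamma$ since $g \in \tilde\gamma^{-1}D(V)$ and $\tilde\gamma \in \Gamma$. Unwinding Construction~\ref{ExtensionRule}, since $D(\eta)\tilde\gamma g \in D(\eta)(D(V) \cap \Gamma)$ we get $(\gamma^{-1}.S(\omega_1))(g) = S(\omega_1)(D(\eta)\tilde\gamma g) = S_0(a)(\tilde\gamma g)$, while $S(\omega_2)(g) = S_0(b)(g)$ because $g$ lies in $D(V) \cap \Gamma = D(e)(D(V) \cap \Gamma)$. Thus the two patches disagree at $g \in V(1) \cap \Gamma$.

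\textbf{Inductive step.} Assume the result for $n$ and let $\gamma \in \Gamma \setminus D^{n+1}(\Gamma)$. I will split into two cases depending on whether $\gamma \in D(\Gamma)$.

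If $\gamma \notin D(\Gamma)$, apply the base case to the pair $S^n(\omega_1), S^n(\omega_2) \in \mathcal{A}^{\Gamma}$ to obtain disagreement of $\gamma^{-1}.S^{n+1}(\omega_1)$ and $S^{n+1}(\omega_2)$ on $V(1) \cap \Gamma$; iterated application of Lemma~\ref{lemmaV}(d) gives $V(1) \cap \Gamma \subset V(n+1) \cap \Gamma$, and we are done.

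If $\gamma = D(\gamma')$ with $\gamma' \in \Gamma$, then $\gamma \notin D^{n+1}(\Gamma) = D(D^n(\Gamma))$ forces $\gamma' \notin D^n(\Gamma)$, so the inductive hypothesis supplies some $\eta \in V(n) \cap \Gamma$ with $(\gamma'^{-1}.S^n(\omega_1))(\eta) \neq S^n(\omega_2)(\eta)$. Apply Lemma~\ref{lemma-injectivity} (case $n=1$) to the patches $\alpha_1 := \gamma'^{-1}.S^n(\omega_1)$ and $\alpha_2 := S^n(\omega_2)$ to get $S(\alpha_1)|_{V(1,\{\eta\}) \cap \Gamma} \neq S(\alpha_2)|_{V(1,\{\eta\}) \cap \Gamma}$. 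Axiom (E2) from Proposition~\ref{Prop-SubstitutionAxioms} identifies $S(\alpha_1) = D(\gamma')^{-1}.S^{n+1}(\omega_1) = \gamma^{-1}.S^{n+1}(\omega_1)$ and $S(\alpha_2) = S^{n+1}(\omega_2)$. Finally, Lemma~\ref{lemmaV}(b)-(c) yield $V(1,\{\eta\}) \subset V(1, V(n) \cap \Gamma) = V(n+1)$, completing the induction.

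\textbf{Main obstacle.} The bookkeeping of the mixed left-$\Gamma$-action and $D$-equivariance is what requires care: the key identity $S(\gamma'^{-1}.\alpha) = D(\gamma')^{-1}.S(\alpha)$ used in Case B comes from applying (E2) to $\gamma'^{-1}$, and the verification in the base case that $\tilde\gamma g \in D(V) \cap \Gamma$ is precisely what makes the non-periodicity hypothesis bite (it ensures the two evaluations of $S$ are governed by the rules at the letters $a$ and $b$ respectively, rather than at neighbouring lattice translates). These are not deep points but must be tracked carefully.
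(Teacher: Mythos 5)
Your proof is correct, and it rests on exactly the same two pillars as the paper's: the non-periodicity axiom handles the base case $n=1$ (your base case is, step for step, the paper's Lemma~\ref{lem-few fixedpoints1}), and Lemma~\ref{lemma-injectivity} propagates a single-site disagreement forward under $S$. The only genuine difference is how the induction splits $\Gamma\setminus D^{n+1}(\Gamma)$: the paper uses $\big(\Gamma\setminus D^{n}(\Gamma)\big)\sqcup\big(D^{n}(\Gamma)\setminus D^{n+1}(\Gamma)\big)$, writing $\gamma=D^n(\tilde\gamma)$ in the second case, applying the base lemma to $S(\omega_i)$ and then Lemma~\ref{lemma-injectivity} with exponent $n$; you use the mirror decomposition $\big(\Gamma\setminus D(\Gamma)\big)\sqcup\big(D(\Gamma)\setminus D^{n+1}(\Gamma)\big)$, writing $\gamma=D(\gamma')$, applying the inductive hypothesis first and then Lemma~\ref{lemma-injectivity} with exponent $1$. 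The two are equivalent in content and length; yours peels the dilation off at the outer level and the paper's at the inner level, and both land in $V(n+1)\cap\Gamma$ via the same monotonicity and composition properties of Lemma~\ref{lemmaV}. Your use of (E2) to identify $S(\gamma'^{-1}S^n(\omega_1))$ with $\gamma^{-1}S^{n+1}(\omega_1)$ is the correct equivariance bookkeeping, matching the paper's invocation of Proposition~\ref{Prop-Axioms2} in its case (b).
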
 

\begin{proof}
The statement is proven via induction over \( n \in \NN \). We begin with the induction base \( n = 1 \).
Fix \( \gamma \in \Gamma \setminus D(\Gamma) \) and \( \omega_1, \omega_2 \in \mathcal{A}^\Gamma \). By equality~\eqref{FundDomainIterate}, the union 
\(
G = \bigsqcup_{\eta \in \Gamma} D(\eta V)
\)
consists of pairwise disjoint sets. Hence, there exists a unique \( \eta \in \Gamma \) such that
\[
\gamma \in D(\eta V) \cap \Gamma = D(\eta) D(V) \cap \Gamma.
\]
Moreover, \( \gamma \neq D(\eta) \) by assumption. Since \( D(\eta) \in \Gamma \), it follows that \(D(\eta)^{-1} \gamma \in \big( D(V) \cap \Gamma \big) \setminus \{e\}\).
By non-periodicity of the substitution datum, we obtain
\[
\big( D(\eta)^{-1} \gamma \big)^{-1} S\big( P_{\omega_1(\eta)} \big)(g) 
\neq S\big( P_{\omega_2(e)} \big)(g)
\]
for some \( g \in \big( D(\eta)^{-1} \gamma \big)^{-1} D(V) \cap D(V) \cap \Gamma \). By the definition of \( S: \mathcal{A}^\Gamma \to \mathcal{A}^\Gamma \), this yields
\[
\big( D(\eta)^{-1} \gamma \big)^{-1} S\big( P_{\omega_1(\eta)} \big)(g)
= S_0\big( \omega_1(\eta) \big)\big( D(\eta)^{-1}(\gamma g) \big)
= S(\omega_1)(\gamma g)
= \gamma^{-1} S(\omega_1)(g),
\]
and since \( g \in D(V) \cap \Gamma \), we also have
\[
S\big( P_{\omega_2(e)} \big)(g) = S_0\big( \omega_2(e) \big)(g) = S(\omega_2)(g).
\]
Thus,
\[
\gamma^{-1} S(\omega_1)(g) \neq S(\omega_2)(g)
\]
and since \( g \in D(V) = V(1) \), this proves the base case.

\medskip

For the induction step, assume the proposition holds for all \( k \leq n \) and fix \( \gamma \in \Gamma \setminus D^{n+1}(\Gamma) \). We distinguish two cases:

\begin{itemize}
\item[(a)] Suppose \(\gamma \in \Gamma \setminus D^n(\Gamma)\): Then, using the induction hypothesis for \( n \), we obtain
\[
\gamma^{-1} S^{n+1}(\omega_1)|_{V(n)}
	= \gamma^{-1} S^n\big( S(\omega_1) \big)|_{V(n)}
	\neq S^n\big( S(\omega_2) \big)|_{V(n)}
	= S^{n+1}(\omega_2)|_{V(n)}.
\]
Since \( V(n) \cap \Gamma \subseteq V(n+1) \cap \Gamma \) by Lemma~\ref{Lem-Prop-V(n,M)}~(d), the desired result follows.

\item[(b)] Suppose \(\gamma \in D^n(\Gamma) \setminus D^{n+1}(\Gamma)\): Then there exists \( \tilde{\gamma} \in \Gamma \) with \( \gamma = D^n(\tilde{\gamma}) \), but \( \tilde{\gamma} \notin D(\Gamma) \). Applying the base case to \( \tilde{\gamma} \), we find
\[
\tilde{\gamma}^{-1} S(\omega_1)(\eta) \neq S(\omega_2)(\eta)
\]
for some \( \eta \in D(V) \cap \Gamma \). Since \( S_0 \) is injective, Lemma~\ref{lemma-injectivity} asserts
\[
S^n\big( \tilde{\gamma}^{-1} S(\omega_1) \big)|_{V(n, \{\eta\})}
\neq S^n\big( S(\omega_2) \big)|_{V(n, \{\eta\})}.
\]
From \( D(V) = V(1) \) and Lemma~\ref{Lem-Prop-V(n,M)}~(b) and (c), it follows that
\[
V(n, \{\eta\}) \subseteq V(n, D(V) \cap \Gamma) = V(n+1).
\]
Thus,
\[
S^n\big( \tilde{\gamma}^{-1} S(\omega_1) \big)|_{V(n+1)}
\neq S^n\big( S(\omega_2) \big)|_{V(n+1)}.
\]
Now, by Proposition~\ref{Prop-SubstitutionProperties}~(b),
\[
\gamma^{-1} S^{n+1}(\omega_1) 
= D^n( \tilde{\gamma}^{-1} ) S^n\big( S(\omega_1) \big)
= S^n\big( \tilde{\gamma}^{-1} S(\omega_1) \big),
\]
and since \( S^{n+1}(\omega_2) = S^n\big( S(\omega_2) \big) \), the desired inequality follows.\qedhere
\end{itemize}
\end{proof}

We are now ready to prove Theorem~\ref{Thm-weakAP-ex}.

\begin{proof}[Proof of Theorem~\ref{Thm-weakAP-ex}]
By Proposition~\ref{Prop-LegalFixpoints}, there exist an $\omega\in\Omega(S)$ and a $k\in\NN$ such that $S^k(\omega)=\omega$. Let $\omega\in\Omega(S)$ be such an $S^k$-fixpoint and let $h \in \Gamma \setminus \{e\}$. By uniform discreteness of $\Gamma$, we observe that there is some $n \in \NN$ such that $h \in \Gamma \setminus D^{nk}(\Gamma)$ since otherwise one would find a sequence $(\gamma_n)$ of non-trivial elements in $\Gamma$ such that $d(\gamma_n, e) = \lambda_0^{-nk} d(h,e) \to 0$ as $n \to \infty$. 
Since \( S^{k}(\omega) = \omega \), it follows from Proposition~\ref{prop-few fixedpoints} applied to \( \omega_1 = \omega_2 = \omega \) that
\[
	h^{-1} \omega|_{V(nk)} =  h^{-1} S^{nk}(\omega)|_{V(nk)} \neq S^{nk}(\omega)|_{V(nk)} = \omega|_{V(nk)}.
\]

This yields $h^{-1} \omega \neq \omega$ and since $h \in \Gamma \setminus \{e\}$ was chosen arbitrarily, we have $\mathrm{Stab}_\Gamma(\omega)=\{e\}$. 
\end{proof}

\section{Existence of good substitution data}
\label{Sec-Ex_GoodSubstData}
The purpose of this section is twofold. Firstly, we establish the existence of primitive substitution data for arbitrary dilation data. Secondly, we show that if a dilation datum belongs to a special class -- referred to as split dilation data -- then it even admits non-periodic primitive substitution data. In fact, given a split dilation datum, we provide a method to construct many explicit examples of such substitutions, which we call good substitution data. In Section~\ref{Sec-Lie}, we will prove that every group admitting a dilation datum also admits a split dilation datum. Taken together, these results thus show that every substitution group admits a non-periodic primitive substitution.

\subsection{Primitive substitution data}
Let $\mathcal{D}:=(G, d, (D_\lambda)_{\lambda >0}, \Gamma, V)$ be an arbitrary dilation datum. The following construction provides a primitive substitution datum over $\Dd$:

\begin{construction}
Since $\Dd$ is a dilation datum, there exists $\lambda > 1$ such that $D_\lambda(\Gamma) \subseteq \Gamma$. As before, we assume that $V$ is bounded and contains an open ball $B(x, r_-) \subseteq V$ for some 
$x \in V$ and $r_- > 0$. If we fix such a ball, then we obtain
\[
B(D_\lambda^N(x), \lambda^N r_-) 
	= D_\lambda^N\big(B(x, r_-)\big) 
	\subseteq D_\lambda^N(V) \quad \text{for all } N \in \NN.
\]
In particular, $D_\lambda^N(V)$ contains arbitrarily large balls as $N \to \infty$ as $\lambda > 1$. Since $\Gamma$ is relatively dense there thus exists $N \in \NN$ such that $|D_\lambda^N(V) \cap \Gamma| \geq 2$. Set $\lambda_0 := \lambda^N$, define $D := D_{\lambda_0}$, and let $\mathcal{A} := \{a, b\}$ be a two-letter alphabet. As $|D(V) \cap \Gamma| \geq 2$, we can choose two disjoint non-empty subsets $\Xi_a, \Xi_b \subseteq \Gamma$ such that $D(V) \cap \Gamma = \Xi_a \sqcup \Xi_b$. 

Define a substitution rule $S_0 \colon \mathcal{A} \to \mathcal{A}^{D(V) \cap \Gamma}$ by setting $S_0(c)(\Xi_a) = \{a\}$ and $S_0(c)(\Xi_b) = \{b\}$ for all $c \in \mathcal{A}$. Then $(\mathcal{A}, \lambda_0, S_0)$ defines a primitive substitution with $L = 1$. A similar construction works for any finite alphabet.
\end{construction}

\subsection{Non-periodic primitive substitution data}
\label{Sec-Exist_nonperiodicSubst}
As shown in the previous subsection, any given dilation datum admits a primitive substitution datum. While it remains unclear whether every dilation datum also admits a non-periodic primitive substitution datum, we will at least establish the following slightly weaker result:

\begin{theorem}\label{Thm-ExNonPer} Let $\mathcal A$ be a finite alphabet with $|\mathcal A| \geq 2$ and let $G$ be a substitution group of dimension $\geq 2$. Then there exist a dilation datum $\mathcal D = (G, d, (D_\lambda)_{\lambda >0}, \Gamma, V)$ of exact polynomial growth over $G$ and a non-periodic primitive substitution datum $\mathcal S = (\mathcal A, \lambda_0, S_0)$ over $\mathcal A$ for $\mathcal D$.
\end{theorem}

The remainder of this subsection is devoted to the proof of Theorem \ref{Thm-ExNonPer}. Our proof is constructive,  i.e.\ we are going to describe below a family of explicit non-periodic primitive substitution data for all substitution groups.

The class of substitution groups will be studied in detail in Section \ref{Sec-Lie} using tools from Lie theory. However, to understand the construction underlying Theorem \ref{Thm-ExNonPer} we only need a few basic structural features of substitution groups; the reader who is willing to take these structural features on faith should be able to follow the construction without knowing any Lie theory.
\begin{construction}[Central extensions via cocycles]
\label{Const-CentralExtension}
 If $H$ is a lcsc group and $m \in \NN$, then a continuous map $\beta: H \times H \to \RR^m$ is called a \emph{normalized cocycle}, provided 
\begin{equation}
\label{Eq-GroupCocycle}
\beta(e,e) = 0  \qand 
\beta(g_2,g_3) + \beta(g_1,g_2g_3) = \beta(g_1g_2,g_3) + \beta(g_1,g_2)
\end{equation}
for all $g_1,g_2,g_3\in H$. In this case we can define a continuous group multiplication on  $H \times \RR^m$ by
\[
(g,x)(h,y) := (gh, x+y +\beta (g,h)) \qand (g,x)^{-1} := (g^{-1},-x-\beta(g,g^{-1})), \quad g,h \in H, x,y \in \RR^m.
\]
Indeed, the group axioms are immediate from \eqref{Eq-GroupCocycle} and the following lemma.
\end{construction}

\begin{lemma}
\label{Lem-NiceCocycle}
If $\beta: H \times H \to \RR^m$ is a normalized cocycle, then 
\[
\beta(g,g^{-1}) = \beta(g^{-1},g) \quad \mbox{and} \quad \beta(g, e) = \beta(e,g) = 0 \quad \mbox{for all }
	g\in H.
\]
\end{lemma}

\begin{proof}
We insert the choices $g_1 := g_3 := g^{-1}$ and $g_2 := g$, respectively $g_1 := g$, $g_2 := g_3 := e$ into the cocycle identity~\eqref{Eq-GroupCocycle}.
\end{proof}
In the sequel, the lcsc group from Construction \ref{Const-CentralExtension} will be denoted  $H \times_\beta \RR^m$; it is called 
the \emph{central extension} of $H$ defined by the cocycle $\beta$. If $A \subset H$ and $B \subset \RR^m$ are subsets then we write $A \times_\beta B$ for $A \times B$ when considered as a subset of $H \times_\beta \RR^m$.

\begin{definition}
\label{Def-SplitDilationDatum} 
Let $H$ be a non-compact group, $m \in \NN$, $\beta: H \times H \to \RR^m$ be a normalized cocycle and $G :=  H \times_\beta \RR^m$. We say that a dilation datum $\mathcal D = (G, d, (D_\lambda)_{\lambda >0}, \Gamma, V)$ over $G$  is \emph{split} if the following hold:
\begin{enumerate}[({A}1)]
\item There exists a dilation family $(D_\lambda')_{\lambda > 0}$ on $H$ and a positive integer $\rho$ such that
\[
D_{\lambda}(h,v) = (D_{\lambda}'(h),\, \lambda^{\rho} v)
\quad \text{for all } h \in H,\; v \in \RR^m.
\]
\item There exist lattices $\Gamma_H < H$ and $\Gamma_0 < \RR^m$ such that $\beta(\Gamma_H \times \Gamma_H) \subset \Gamma_0$ and \(\Gamma = \Gamma_H \times_\beta \Gamma_0\).
\item There exist identity neighborhoods $V_H \subset H$ and $V_0 \subset \RR^m$ such that \(V = V_H \times_\beta V_0\).
\item For every $\lambda > 1$ satisfying $D_{\lambda}(\Gamma) \subset \Gamma$, define $F_0(\lambda) := \lambda^\rho V_0 \cap \Gamma_0$. Then
\[
\bigcap_{x \in F_0(\lambda)} (x + F_0(\lambda)) \neq \emptyset.
\]
\end{enumerate}
\end{definition}

Conditions (A1)--(A3) motivate the term split dilation datum, and they imply that
\begin{equation}
\label{Eq-FHF0}
D_{\lambda}(V) \cap \Gamma = F_H(\lambda) \times_\beta F_0(\lambda), \quad \text{where } F_H(\lambda) := D'_\lambda(V_H) \cap \Gamma_H \text{ and } F_0(\lambda) := \lambda^\rho V_0 \cap \Gamma_0.
\end{equation}
Condition (A4) is essential for establishing the existence of a non-periodic substitution datum; see Proposition~\ref{Prop-GoodSubst}. 

\begin{example}
\label{Ex-Heisen_SplitDil}
The dilation datum defined in Example~\ref{Ex-HeisenbergIntro} for the Heisenberg group $G=\mathbb{H}_3(\RR)$ is a split dilation datum. The normalized cocycle on \( H = \RR^2 \) is given by
\[
\beta : \RR^2 \times \RR^2 \to \RR, \qquad
\beta\big((x,y),(x',y')\big) = \tfrac{1}{2}(xy' - yx').
\]
The dilation on \( H \) is \( D'_\lambda(x, y) = (\lambda x, \lambda y) \), and \( \rho = 2 \). We set \( \Gamma_H = (2\ZZ)^2 \), \( \Gamma_0 = 2\ZZ \), and choose the fundamental domains \( V_H = [-1,1)^2 \) and \( V_0 = [-1,1) \). For each integer \( \lambda > 1 \), we have
\[
D_\lambda(\Gamma) \subsetneq \Gamma \qand F_0(\lambda) = [-\lambda^2, \lambda^2) \cap 2\ZZ.
\]
A direct computation shows that
\[
\bigcap_{x \in F_0(\lambda)} (x + F_0(\lambda)) = \{a\} \neq \emptyset,
\]
with $a=-2$ if $\lambda$ is even and $a=0$ if $\lambda$ is odd, so condition (A4) is satisfied. 
\end{example}
The following general existence result will be proved at the very end of Section 8.
\begin{theorem}[Existence of split dilation data]
\label{Thm-LieMain} 
If $G$ is a substitution group of dimension $\geq 2$, then $G \cong H \times_\beta \RR^m$ for some $m \in \NN$, non-compact Lie group $H$ and normalized cocycle $\beta$, and $H \times_\beta \RR^m$ admits a split dilation datum $\mathcal D = (H \times_\beta \RR^m, d, (D_\lambda)_{\lambda >0}, \Gamma, V)$ of exact polynomial volume growth.
\end{theorem}
\begin{no} In view of the theorem we assume from now on that \( G = H \times_\beta \RR^m \) and that \( \mathcal{D} \) is a split dilation datum. We further assume that \( \Gamma_H, \Gamma_0, V_H, V_0,\) and \( \rho \) are chosen as in Definition~\ref{Def-SplitDilationDatum}, and for any \( \lambda > 1 \), we define the sets \( F_H(\lambda) \subset \Gamma_H \) and \( F_0(\lambda) \subset \Gamma_0 \) as in equation~\eqref{Eq-FHF0}. Finally, we fix a finite alphabet \( \mathcal{A} \) with \( |\mathcal{A}| \geq 2 \).
\end{no}
\begin{lemma}[Choice of stretch factor]
\label{Lem-GoodStretchFactor}
For every split dilation datum $\mathcal D$ there exists $\lambda_0>1$ with the following properties:
\begin{enumerate}[(a)]
\item $D_{\lambda_0}(\Gamma) \subsetneq \Gamma$.
\item $\lambda_0$ is $V$-sufficient.
\item $|F_H (\lambda_0)| \geq 3$ and $|F_0(\lambda_0)| \geq |\Aa|+1$.
\end{enumerate}
\end{lemma}

\begin{proof} By \S \ref{DInclusionProper2} there exists  \( \lambda > 1 \) such that \( D_\lambda(\Gamma) \subsetneq \Gamma \), and since \( \Gamma_H \subset H \) and \( \Gamma_0 \subset \RR^m \) are both relatively dense, there exist bounded sets \( B_H \subset H \) and \( B_0 \subset \RR^m \) such that \[ |\Gamma_H \cap B_H| \geq 3 \qand |\Gamma_0 \cap B_0| \geq |\mathcal{A}| + 1.\] 
Now the product \( B_H \times_\beta B_0 \subseteq G \) is bounded, and we choose \( R > 0 \) with \(B_H \times_\beta B_0 \subseteq B(e, R)\). Since $V$ is bounded and contains an identity neighbourhood by (A3), we can choose an inner radius $r_-$ and outer radius $r_+$ for $V$. Since \( \lambda > 1 \), we can choose \( N \in \NN \) such that
\[
\lambda^N > 1 + \tfrac{r_+}{r_-}
\quad \text{and} \quad
\lambda^N r_- > R.
\]
If we now set \( \lambda_0 := \lambda^N \), then (a) holds by choice of $\lambda$ and (b) hold by Proposition~\ref{Prop-SuffCriter:suff_large-old}. Finally, 
\[
B_H \times_\beta B_0 
	\subseteq B(e, R) 
	\subseteq B(e, \lambda_0 r_-) 
	\subseteq D(B(e, r_-)) 
	\subseteq D_{\lambda_0}(V).
\]
Hence, by construction of \( B_H \) and \( B_0 \), we obtain \( |F_H(\lambda_0)| \geq 3 \) and \( |F_0(\lambda_0)| \geq |\mathcal{A}| + 1 \).
\end{proof}
\begin{no}
We refer to $\lambda_0$ as in Lemma~\ref{Lem-GoodStretchFactor} as a \emph{good stretch factor}. From now on we fix a good stretch factor $\lambda_0$; we then abbreviate
\[
D := D_{\lambda_0}, \quad F_H := F_H(\lambda_0) \qand F_0:= F_0(\lambda_0)
\]
so that by Lemma~\ref{Lem-GoodStretchFactor}~(c) and \eqref{Eq-FHF0} we have
\begin{equation}
\label{Eq-Dil_GoodSubst}
D(V)\cap\Gamma=F_H\times_\beta F_0 \quad \text{with} \quad |F_H| \geq 3 \qand |F_0| \geq |\Aa| +1.
\end{equation}
\end{no}
\begin{construction}
\label{Const-DVGammaF} 
Since \( |F_H| \geq 3 \), we can fix elements \( \gamma_1, \gamma_2 \in F_H \setminus \{e\} \) with \( \gamma_1 \neq \gamma_2 \). These choices are fixed once and for all.

By (A4), there exists an element \( x_1 \in \bigcap_{x \in F_0} (x + F_0) \), which we also fix. Since \( 0 \in F_0 \), it follows that \( x_1 \in F_0 \). Moreover, as \( |F_0| \geq |\mathcal{A}| + 1 \geq 2 \), we can choose and fix another element \( x_2 \in F_0 \setminus \{x_1\} \). Since \( \mathcal{A} \neq \emptyset \), we also fix some \( a \in \mathcal{A} \).

Again using \( |F_0| \geq |\mathcal{A}| + 1 \), we can define and fix an injective map
\[
\mathcal{A} \setminus \{a\} \hookrightarrow F_0 \setminus \{x_1, x_2\}, \quad c \mapsto x_c.
\]

With these choices fixed, we define the following subsets of \( F_H \times_\beta F_0 \):
\begin{align*}
\Xi_a &:= \big(\{e\} \times_\beta F_0\big) \,\sqcup\, \big(\{\gamma_1\} \times_\beta (F_0 \setminus \{x_1\})\big), \\
\Xi_o &:= \{(\gamma, x_1) \mid \gamma \in F_H \setminus \{e\} \}, \\
\Xi &:= \Xi_a \,\sqcup\, \Xi_o \,\sqcup\, \{(\gamma_2, x_c) \mid c \in \mathcal{A} \setminus \{a\}\} \,\sqcup\, \{(\gamma_2, x_2)\}.
\end{align*}
By construction, we have \( \Xi \subseteq F_H \times_\beta F_0 \), and thus, using~\eqref{Eq-Dil_GoodSubst}, also \( \Xi \subseteq D(V) \cap \Gamma \).
\end{construction}

\begin{definition}
\label{Def-GoodSubst} 
Let $\mathcal D$ be a split dilation datum with substitution datum $\Ss=(\Aa,\lambda_0,S_0)$ where $\lambda_0$ is a good stretch factor.
The substitution rule $S_0: \Aa \to \Aa^{D(V) \cap \Gamma}$ is called \emph{good} if the following conditions are satisfied, using the notation from Construction~\ref{Const-DVGammaF}:
\begin{enumerate}[(G1)]
\item For every $b \in \mathcal A$ we have $S_0(b)(\Xi_a) = \{a\}$ and $S_0(b)(\Xi_o) \in\Aa\setminus\{a\}$.
\item For every $b \in \Aa$ and $c\in\Aa\setminus\{a\}$, we have $S_0(b)(\gamma_2,x_c) = c$.
\item For every $b \in \Aa$ we have $S_0(b)(\gamma_2,x_2) = b$.
\end{enumerate}
\end{definition}

\begin{remark} 
For every split dilation datum, every alphabet $\mathcal{A}$ with $|\mathcal{A}| \geq 2$, and every good stretch factor $\lambda_0$, there exist numerous choices for good substitution rules, as only a small subset of the values of $S_0$ is prescribed. In particular, if $|\mathcal{A}| \geq 3$, the values $S_0(b)(\Xi_o)$ can be chosen arbitrarily, provided they differ from the distinguished letter $a$.

Condition (G1) ensures that the substitution rule is non-periodic. Conditions (G2) and (G3) guarantee primitivity and injectivity of $S_0$, respectively. These latter conditions may be relaxed if primitivity or injectivity is ensured by other means.
\end{remark}

Theorem~\ref{Thm-LieMain} is now a consequence of the following observation:

\begin{proposition} 
\label{Prop-GoodSubst}
Let $\mathcal D$ be a split dilation datum with substitution datum $\Ss=(\Aa,\lambda_0,S_0)$ where $\lambda_0$ is a good stretch factor.
If $S_0$ is a good substitution rule, then the associated substitution $\Ss$ is primitive and non-periodic.
\end{proposition}

\begin{proof}
Firstly, we show that $(\mathcal{A}, \lambda_0, S_0)$ is primitive with exponent $L = 1$, i.e.\ that for all $b,c \in \mathcal{A}$ we have $P_c \prec S^L(P_b)$. This follows since for each $b \in \mathcal{A}$, by (G1) and (G2),
\[
S_0(b)(e,0) = a \qand S_0(b)(\gamma_2, x_c) = c, \quad \text{for } c \in \mathcal{A} \setminus \{a\}, 
\]
where we used that $(e,0) \in \{e\} \times_\beta F_0 \subseteq \Xi_a$.

\medskip

Secondly, we claim that $S_0$ is injective. This follows directly from condition (G3) stating $S_0(b)(\gamma_2, x_2) = b$ for all $b \in \mathcal{A}$.

\medskip

Thirdly, let $\eta = (g, x) \in (D(V) \cap \Gamma) \setminus \{(e,0)\}$ and $b,c \in \mathcal{A}$. It remains to show that
\[
\exists\, \eta' \in \eta^{-1} D(V) \cap D(V) \cap \Gamma : \quad (\eta^{-1} S(P_c))(\eta') \neq S(P_b)(\eta').
\]

Assume first that $g = e$, so $x \neq 0$. Since $D(V) \cap \Gamma = F_H \times_\beta F_0$, we then have $x \in F_0 \setminus \{0\}$. As $(\gamma_1, x_1) \in D(V) \cap \Gamma$, define
\[
\eta' := (\gamma_1, x_1 - x) = \eta^{-1} (\gamma_1, x_1) \in \eta^{-1} (D(V) \cap \Gamma) = \eta^{-1} D(V) \cap \Gamma.
\]
By (A4), $x_1 \in x + F_0$ and thus $x_1 - x \in F_0$, so $\eta' = (\gamma_1, 0)(e, x_1 - x) \in F_H \times_\beta F_0 = D(V) \cap \Gamma$. This shows that $\eta' \in \eta^{-1} D(V) \cap D(V) \cap \Gamma$. Furthermore, since $x \neq 0$, we have $\eta' \in \{\gamma_1\} \times_\beta (F_0 \setminus \{x_1\}) \subseteq \Xi_a$, while $(\gamma_1, x_1) \in \Xi_o$. We deduce that
\[
(\eta^{-1} S(P_c))(\eta') = S(P_c)(\eta \eta') = S(P_c)(\gamma_1, x_1) \in \mathcal{A} \setminus \{a\} \not\ni a = S(P_b)(\eta').
\]
This finishes the case $g = e$.

\medskip

Now assume $g \neq e$. Set $\eta' := (e, x_1 - x)$. Again by (A4), $x_1 \in x + F_0$ implies $x_1 - x \in F_0$, hence $\eta' \in \Xi_a \subseteq D(V) \cap \Gamma$. Moreover,
\[
\eta \eta' = (g, x_1) \in \Xi_o \subset D(V) \cap \Gamma,
\]
since $g \in F_H \setminus \{e\}$. Thus $\eta' \in \eta^{-1} D(V)$, and using (G1) we obtain
\[
(\eta^{-1} S(P_c))(\eta') = S(P_c)(\eta \eta') = S(P_c)(g, x_1) \in \mathcal{A} \setminus \{a\} \not\ni a = S(P_b)(\eta'),
\]
which completes the proof.
\end{proof}

\begin{figure}[htb]
\includegraphics[scale=0.8]{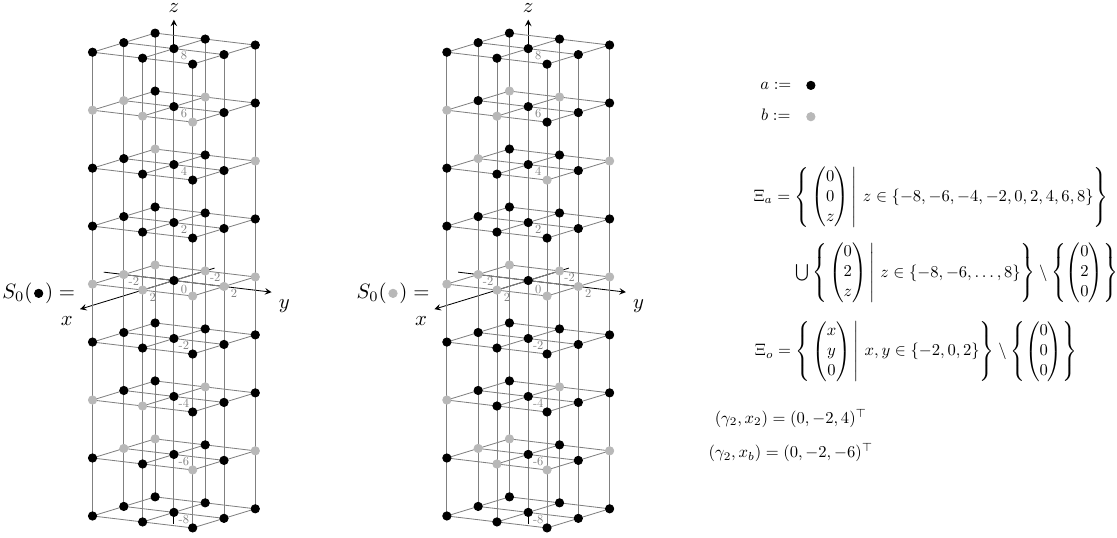}
\caption{The substitution rule $S_0$ is plotted together with the sets $\Xi_a$ and $\Xi_o$. This shows that $S_0$ is a good substitution rule.}
\label{Fig-Heisenberg_GoodSubst}
\end{figure}

\begin{example}
\label{Ex-HeisenbergGoodSubst} 
We now show that the substitution datum \( \Ss = (\Aa, \lambda_0, S_0) \) defined in Example~\ref{Ex-HeisenbergSubstitution} is a good substitution rule for the Heisenberg group \( \mathbb{H}_3(\RR) \). Recall that \( \Aa = \{a, b\} \), \( \lambda_0 = 3 \), and the substitution rule \( S_0 \) is given in Figure~\ref{Fig-Heisenberg_GoodSubst}. We use the splitting from Example~\ref{Ex-Heisen_SplitDil}. As shown in Example~\ref{Ex-HeisenbergSubstitution}, the stretch factor \( \lambda_0 = 3 \) is sufficiently large relative to \( V \).
For this choice of \( \lambda_0 \), we compute:
\[
D(V)\cap \Gamma = \{-2, 0, 2\} \times \{-2, 0, 2\} \times \{-8, -6, -4, -2, 0, 2, 4, 6, 8\},
\]
so in the notation introduced earlier we have
\[
F_H = \{-2, 0, 2\}^2, \qquad F_0 = \{-8, -6, -4, -2, 0, 2, 4, 6, 8\}.
\]
Since \( |F_H| \geq 3 \), \( |F_0| \geq |\Aa| + 1 = 3 \), and \( \bigcap_{x \in F_0} (x + F_0) = \{0\} \), it follows that \( \lambda_0 \) is a good stretch factor.

We now fix the following elements: 
\[
\gamma_1 := (0, 2)^\top, \qquad \gamma_2 := (0, -2)^\top \quad \text{in } F_H \setminus \{e\},
\]
\[
x_1 := 0 \in \bigcap_{x \in F_0} (x + F_0), \qquad x_2 := 4, \qquad x_b :=-6\in F_0 \setminus \{x_1\}.
\]
With this setup, it is straightforward to verify that the substitution rule \( S_0 : \Aa \to \Aa^{D(V) \cap \Gamma} \) from Example~\ref{Ex-HeisenbergSubstitution} is good in the sense of Definition~\ref{Def-GoodSubst}, see also Figure~\ref{Fig-Heisenberg_GoodSubst}. Thus, by Proposition~\ref{Prop-GoodSubst}, the substitution datum \( (\Aa, 3, S_0) \) over \(\mathcal{D}_{\mathbb{H}} := (\mathbb{H}_3(\RR), d, (D_\lambda)_{\lambda > 0}, \Gamma, V)\) is primitive and non-periodic.
\end{example}

\section{Strong aperiodicity}
\label{Sec-StrongAper}
\begin{no} Recall from Section \ref{Sec-SubstDynamSyst} that if $\Ss$ is a non-periodic substitution datum over a dilation datum $\Dd$ with associated substitution map $S$, then the associated subshift $\Omega(S)$ is weakly aperiodic. Here we are interested in criteria which guarantee that it is actually strongly aperiodic in the sense that every element of $\Omega(S)$ has trivial stabilizer. In the abelian case, strong aperiodicity follows immediately from weak aperiodicity and minimality. In fact, we have the following more general result:
\end{no}
\begin{lemma}
\label{Lem-APcenter}
Let $H$ be a group with center $Z$, acting minimally by homeomorphisms on a space $\Omega$. If there exists an $\omega \in \Omega$ such that $\mathrm{Stab}_Z(\omega) = \{e\}$, then $\mathrm{Stab}_Z(\rho) = \{e\}$
for all $\rho \in \Omega$. In particular, any weakly aperiodic minimal action of an abelian lcsc group $H$ is automatically strongly aperiodic.
\end{lemma}
\begin{proof}
Let $\omega \in \Omega$ be as in the lemma and fix $\rho \in \Omega$. 
Since the action $H \curvearrowright \Omega$ is minimal, there exists a sequence $(h_n)$ in $H$ with $h_n \rho \to \omega$ as $n \to \infty$. 
By continuity of the action, for any $z \in Z$ with $z \rho = \rho$ we have
\[
z \omega 
= z \big( \lim_{n \to \infty} h_n \rho \big) 
 = \lim_{n \to \infty} (z h_n)\rho 
 = \lim_{n \to \infty} (h_n z)\rho 
 = \lim_{n \to \infty} h_n (z \rho) 
 = \lim_{n \to \infty} h_n \rho 
 = \omega.
\]
Since $\mathrm{Stab}_Z(\omega)\cap Z=\{e\}$ by assumption, this forces $z=e$, as claimed.
\end{proof}
\begin{corollary}\label{AbelianStrong} If $\Ss$ is a non-periodic primitive substitution datum over a dilation datum $\Dd$ with associated substitution map $S$ over an abelian lcsc group, then $\Gamma \curvearrowright \Omega(S)$ is strongly aperiodic.
\end{corollary}

\begin{remark} Clearly the proof of Corollary \ref{AbelianStrong} does not extend directly to non-abelian situations; nevertheless Lemma \ref{Lem-APcenter} is useful also in non-abelian situations, since it guarantees absence of periods in central directions. Since nilpotent Lie groups (in particular, substitution groups) arise as iterated central extensions of abelian Lie groups, one can hope to establish a version of Corollary \ref{AbelianStrong} for general substitution groups by induction on the nilpotency degree. Since the details of such an inductive argument become rather technical, we confine ourselves to the case of $2$-step nilpotent groups.
\end{remark}
\begin{no}\label{SpecialSub} From now on let $G$ be a substitution group of dimension $d \geq 2$ which is $2$-step nilpotent. Given $g,h \in G$ we denote by $[g,h] := ghg^{-1}h^{-1}$ their commutator; these commutators then generate a normal subgroup $[G,G]$ of $G$, called the \emph{commutator subgroup}. We now need a result from the structure theory of $2$-step nilpotent substitution groups (see Example \ref{Ex-Splitting_2Step} below): Up 
to isomorphism we may assume that $G$ is a central extension of the form $G = H \times_\beta \RR^m$ such that
$H \cong (\RR^{d-m}, +)$ is abelian and non-compact, and the commutator subgroup $[G,G]$ is contained in the central subgroup $\{e\} \times_\beta \RR^m$. We refer to a split dilation datum $\mathcal D = (G, d, (D_\lambda)_{\lambda >0}, \Gamma, V)$ with respect to this special kind of splitting as a \emph{special dilation datum}. 
\end{no}
\begin{example}\label{HeisenbergSpecial} The Heisenberg group is two-step nilpotent, and the split dilation datum from Example~\ref{Ex-Heisen_SplitDil} is special. Indeed, if we write the Heisenberg group as $H \times_\beta \RR$ with $\beta$ as in Example~\ref{Ex-Heisen_SplitDil}, then $H\cong (\RR^2, +)$ is abelian and the commutator subgroup is given by $\{0\} \times_\beta \RR$, which coincides with the center.
\end{example}
\begin{no}
From now on we fix a splitting $G = H \times_\beta \RR^m$ as above, a special substitution datum $\mathcal D = (G, d, (D_\lambda)_{\lambda >0}, \Gamma, V)$ and a  primitive non-periodic substitution datum $\Ss = (\mathcal A, \lambda_0, S_0)$ over $\Dd$. We write $H$ additively and in particular denote by $0_H$ ist neutral element. By assumption the dilations $D_\lambda$ split as 
\[
D_{\lambda}(h,v) = (D_{\lambda}'(h),\, \lambda^{\rho} v)
\]
for some $\rho > 0$ and we have $D_{\lambda_0}(\Gamma) \subsetneq \Gamma$. Our goal is to establish Theorem \ref{Thm-strongAP} from the introduction, which claims that $\Gamma \curvearrowright \Omega(S)$ is strongly aperiodic. We need the following observation:
\end{no}
\begin{lemma}
\label{lem-Jenga}
Let $Z$ denote the center of $G$ and $\lambda>1$ be a stretch factor. Then for every $\gamma \in \Gamma \setminus Z$ there exists $N_0 \in \NN$ such that
\[
\gamma\, D_{\lambda}^{n}\big(\{0_H\} \times V_0\big) \,\cap\, D_{\lambda}^{n}(\Gamma) \;=\; \emptyset
\quad \text{for all } n \geq N_0.
\]
\end{lemma}

\begin{proof}
Fix $\gamma = (\gamma_H, \gamma_0) \in \Gamma \setminus Z$ and set $D:=D_\lambda$.
Since $\gamma \notin Z$, we have $\gamma_H \neq 0_H$, and hence there exists $N_0 \in \NN$ such that 
$\gamma_H \notin (D_{\lambda}')^{n}(\Gamma_H)$ for all $n \geq N_0$. 
Now fix any $n \ge N_0$ and observe that $D^{n}(\Gamma) = (D'_{\lambda})^n(\Gamma_H) \times \lambda^{\rho n} \Gamma_0$.
Since $\beta$ is a normalized cocycle, we have $\beta(e_H, t) = 0$ for all $t \in \RR^m$, see Lemma~\ref{Lem-NiceCocycle}. Thus,
\[
\gamma\, D^{n}\big(\{0_H\} \times V_0\big) 
	= (\gamma_H, \gamma_0)\big(\{0_H\} \times \lambda^{\rho n} V_0\big) 
	= \{\gamma_H\} \times (\gamma_0 + \lambda^{\rho n} V_0\big).
\]
Since $\gamma_H \notin (D_{\lambda}')^n(\Gamma_H)$, the lemma follows.
\end{proof}

\begin{proof}[Proof of Theorem~\ref{Thm-strongAP}]
Since $\mathcal S$ is primitive, $\Omega(S)$ is minimal by Theorem~\ref{Thm-LinearRep}. 
By Proposition~\ref{Prop-LegalFixpoints}, there exist $\omega \in \Omega(S)$ and $k \in \NN$ with $S^{k}(\omega) = \omega$, and we fix such a pair $(\omega, k)$ once and for all. 
Recall from Theorem~\ref{Thm-weakAP-ex} that $\omega$ has trivial $\Gamma$-stabilizer, and fix $\rho \in \Omega(S)$ and $\gamma \in \mathrm{Stab}_\Gamma(\rho)$. We need to show that $\gamma = e$. 

If $\gamma \in Z$, then $\gamma = e$ by Lemma~\ref{Lem-APcenter} and minimality of $\Omega(S)$; hence assume for contradiction that $\gamma \in \Gamma \setminus Z$. 
By Lemma~\ref{lem-Jenga}, choose $k_0 \in \NN$ such that, for $n := k_0 k$, 
\[
S^{n}(\omega) = \omega 
\qquad\text{and}\qquad
\gamma D^{n}\big(\{0_H\}\times V_0\big) \,\cap\, D^{n}(\Gamma) = \emptyset.
\]

Since $\Omega(S)$ is minimal, there exists $(\eta_j)_{j\in\NN} \subset \Gamma$ with $\eta_j^{-1}\omega \to \rho$. 
As $\Gamma = D^n(\Gamma)\big(D^n(V)\cap\Gamma\big)$ by~\eqref{PreExtension} and $D^n(V)\cap\Gamma$ is finite, we may assume $\eta_j = D^n(\tilde{\eta}_j)\eta_0$ for some $\tilde{\eta}_j\in\Gamma$, $\eta_0\in D^n(V)\cap\Gamma$. 
Define
\[
z := [\gamma^{-1},\eta_0] = \gamma^{-1}\eta_0\gamma\eta_0^{-1}.
\]
Then $z \in [G,G]\subseteq \{0_H\}\times\RR^m$ and, since $z\in\Gamma$, also $z\in\{0_H\}\times \Gamma_0$. 
By the splitting of the dilation and since $V=V_H\times_\beta V_0$, there exist $z_0\in D^n(\{0_H\}\times V_0)$ and $\tilde{z}\in\{0_H\}\times \Gamma_0$ such that $z = D^n(\tilde{z})z_0$.
 
\item Since $z_0\in [G,G]\subseteq Z(G)$ commutes with all elements in $G$ we have
\[
\eta_0\gamma\eta_0^{-1}
	= \eta_0\gamma\eta_0^{-1}z^{-1}z
	= \eta_0\gamma\eta_0^{-1}(\eta_0\gamma^{-1}\eta_0^{-1}\gamma)z
	= \gamma D^n(\tilde{z})z_0
	= \gamma z_0 D^n(\tilde{z}).
\]

Combining this with $\gamma\rho=\rho$, $S^n(\omega)=\omega$, $S^n(g\omega)=D^n(g)S^n(\omega)$ (see Proposition~\ref{Prop-SubstitutionProperties}) and continuity of the $\Gamma$-action yields
	\begin{align*}
	S^n\big(\tilde{\eta}_j^{-1}\omega\big) = D^n(\tilde{\eta}_j^{-1}) S^n(\omega) = \eta_0 \eta_0^{-1} D^n(\tilde{\eta}_j^{-1}) \omega = \eta_0 \eta_j^{-1}\omega 
		\;\overset{j\to\infty}{\longrightarrow}\; &\eta_0 \rho\\	
							& \;\; \rotatebox{90}{=}\\
	\gamma z_0  S^n\big(\tilde{z}\tilde{\eta}_j^{-1} \omega\big) = \eta_0 \gamma \eta_0^{-1} D^n(\tilde{\eta}_j^{-1}) S^n(\omega) = \eta_0 \gamma \eta_j^{-1}\omega 
		\;\overset{j\to\infty}{\longrightarrow}\; &\eta_0 (\gamma \rho).
	\end{align*}

By definition of the product topology from $\Aa^\Gamma$, there exists $j_0\in\NN$ such that 
\[
\big(\gamma z_0 S^n(\tilde{z}\tilde{\eta}_j^{-1}\omega)\big)|_{V(n)}
	= \big(S^n(\tilde{\eta}_j^{-1}\omega)\big)|_{V(n)}, \qquad j\ge j_0.
\]
Set $\omega_1 := \tilde{z}\tilde{\eta}_{j_0}^{-1}\omega$, $\omega_2 := \tilde{\eta}_{j_0}^{-1}\omega$, and $\eta := (\gamma z_0)^{-1}$. Then
\[
(\eta^{-1}S^n(\omega_1))|_{V(n)} = (S^n(\omega_2))|_{V(n)}.
\]
Since $\gamma D^n(\{e_H\}\times V_0)\cap D^n(\Gamma)=\emptyset$ and $z_0\in D^n(\{e_H\}\times V_0)$, we have $\gamma z_0\notin D^n(\Gamma)$, hence $\eta^{-1}\in\Gamma\setminus D^n(\Gamma)$ and $\eta\in\Gamma\setminus D^n(\Gamma)$. 
This contradicts Proposition~\ref{prop-few fixedpoints}, since $(\Aa,\lambda_0,S_0)$ is non-periodic.
\end{proof}
At this point we have also established Theorem~\ref{Thm-HeisenbergMain} from the introduction:
\begin{proof}[Proof of Theorem~\ref{Thm-HeisenbergMain}]
The dilation datum from Example~\ref{Ex-Heisen_SplitDil} is special, by Example \ref{HeisenbergSpecial}, and by Lemma~\ref{Lem-GoodStretchFactor} there exists a good stretch factor $\lambda_0$. Via Construction~\ref{Const-DVGammaF}, we obtain a good substitution datum. Now by Proposition~\ref{Prop-GoodSubst}, the associated substitution $\mathcal{S}$ is primitive and non-periodic, and the associated subshift is
minimal, linearly repetitive and uniquely ergodic by Theorem~\ref{Thm-LinearRep} and even strongly aperiodic by Theorem~\ref{Thm-strongAP}. 
\end{proof}

\section{Applications to Delone dynamical systems} 
\label{Sec-Delone}

\begin{no}\label{WeightedDelone} So far, all our results were concerned with subshifts; however, there is a standard way to transfer our results to the context of Delone dynamical systems, which we briefly recall. We denote by $\mathrm{Del}(G)$ the space of Delone sets in $G$, equipped with the Chabauty--Fell topology, and refer to \cite{BjHaPo18} for general background on Delone sets in groups. We fix a lcsc group $G$, uniform lattice $\Gamma < G$ and a finite alphabet $\Aa$ together with an embedding $\iota: \Aa \hookrightarrow (0, \infty)$. For concreteness' sake let us assume that actually $\Aa = \{a,b\}$ with $\iota(a) = 1$ and $\iota(b) = 2$. If we denote by $\mathcal R(\Gamma)$ the space of Radon measures on $\Gamma$, then we obtain a continuous $\Gamma$-equivariant embedding
\[\Aa^\Gamma \hookrightarrow \mathcal R(\Gamma), \quad \omega \mapsto \Lambda_\omega := \sum_{\gamma \in \Gamma} \iota(\omega(\gamma)) \delta_\gamma,\]
whose image consists of ``weighted Delone sets''. For every $\omega \in \Aa^\Gamma$ we thus obtain an isomorphism of topological dynamical systems (cf.\ \cite[Lemma~5.1 and Proposition~5.2]{BHP25-LR})
\begin{equation}\label{GammaHulls1}
(\Gamma \curvearrowright \Omega_\omega) \cong (\Gamma \curvearrowright \Omega^\Gamma_{\Lambda_\omega}),\end{equation}
where $\Omega_\omega$ and $\Omega^\Gamma_{\Lambda_\omega}$ denote the orbit closures of $\omega$ and $\Lambda_\omega$ respectively. To get rid of the weights we make the following observation. 
\end{no}
\begin{proposition}\label{Prop-NoWeights} Let $\omega \in \Aa^\Gamma$ be linearly repetitive.
\begin{enumerate}[(a)]
\item For every $\omega' \in \Omega_\omega$ the subset $\Lambda'_{\omega'}  := \{\gamma \in \Gamma \mid \omega'(\gamma) = a\} \subset \Gamma$ is a linearly repetitive Delone set in $G$.
\item If $\Omega^\Gamma_{\Lambda'_\omega}$ denotes the $\Gamma$-orbit closure of $\Lambda'_\omega$ in $\mathrm{Del}(G)$, then there is an isomorphism 
\begin{equation}\label{GammaHulls2}(\Gamma \curvearrowright \Omega_\omega) \cong (\Gamma \curvearrowright \Omega^\Gamma_{\Lambda'_\omega}), \quad \omega' \mapsto \Lambda'_{\omega'}.\end{equation}
\end{enumerate}
\end{proposition}
\begin{proof} (a) Let $\omega' \in \Omega$; then $\omega'$ (and hence $\Lambda'_{\omega'}$) is linearly repetitive by Theorem \ref{Thm-LinearRep}, hence there exists a radius $R>0$ such that $P_a \prec \omega|_{B(x,R)}$ for all $x\in G$. This implies that $\Lambda'_{\omega}$ is $R$-relatively dense, and since $\Gamma$ is a discrete subgroup, $\Lambda'_{\omega'} \subset \Gamma$ is uniformly discrete.

\item (b) Denote by $\Lambda''_\omega = \sum_{\lambda \in \Lambda'_\omega} \delta_\lambda \in \mathcal R(\Gamma)$ the associated Dirac comb of $\Lambda'_\omega$; then there is a $\Gamma$-equivariant homeomorphism $\Omega^\Gamma_{\Lambda'_\omega} \to \Omega^\Gamma_{\Lambda''_\omega}$ which maps every Delone set to its associated Dirac comb (cf.\ \cite[Proposition~3.9]{BjHaPoII}). On the other hand,
the map
\[
\Omega^\Gamma_{\Lambda_\omega} \to \mathcal R(\Gamma), \quad \mu \mapsto \sum_{\{x \in H \mid \mu(x) = 1\}} \delta_x
\]
is continuous, $\Gamma$-equivariant, injective and maps $\Lambda_\omega$ to $\Lambda''_\omega$, hence induces a $\Gamma$-equivariant homeomorphism $\Omega^\Gamma_{\Lambda_\omega} \to \Omega^\Gamma_{\Lambda''_\omega}$. Combining these identifications yields the claim.
\end{proof}
\begin{no} Combining the isomorphisms \eqref{GammaHulls1} and \eqref{GammaHulls2} we also obtain an isomorphism
\[
(\Gamma \curvearrowright \Omega^\Gamma_{\Lambda'_\omega}) \cong (\Gamma \curvearrowright \Omega_\omega) \cong  (\Gamma \curvearrowright \Omega^\Gamma_{\Lambda_\omega}).
\]
for every linearly repetitive configuration $\omega \in \Aa^\Gamma$. In this case we can also consider $\Lambda_\omega$ as a Radon measure on $G$ and $\Lambda'_\omega$ as a Delone set in $G$. If we now denote by $\Omega_{\Lambda_\omega}$ and $\Omega_{\Lambda'_\omega}$ the corresponding $G$-orbit closures, then the same argument as in the proof of Proposition \ref{Prop-NoWeights} provides isomorphisms
\[
(G \curvearrowright \Omega_{\Lambda'_\omega}) \cong (G \curvearrowright \Omega_{\Lambda_\omega}).
\]
Moreover, these systems are isomorphic to the $G$-suspension $G \curvearrowright (G \times \Omega_\omega)/\Gamma$ of $\Gamma \curvearrowright \Omega_\omega$.
\end{no}
With this at hand, we can now prove the remaining corollaries from the introduction.
\begin{proof}[Proof of Corollary~\ref{Cor-exLR_Del}]
Since $G$ is a substitution group, it admits a dilation datum $\Dd$ of exact polynomial growth together with a primitive and non-periodic substitution datum $\Ss$ over the alphabet $\mathcal A = \{a,b\}$ (see Theorem~\ref{Thm-ExPrimNonPerSubst}).
If $G$ is $2$-step nilpotent, then we will ensure that $\Dd$ is special in the sense of \S \ref{SpecialSub}. We then denote by $S$ the associated substitution map of $(\Dd, \Ss)$ and by $\Gamma \curvearrowright \Omega(S)$ the associated subshift. By Theorem \ref{MainTheorem} this subshift is minimal and uniquely ergodic and contains an element $\omega$ with trivial $\Gamma$-stabilizer; we fix such an element once and for all. If we now set $\Lambda := \Lambda'_{\omega} \in \mathrm{Del}(G)$, then $\Omega(S) = \Omega_\omega \cong \Omega_{\Lambda}^\Gamma$ and hence $\Omega_{\Lambda} = (G \times \Omega(S))/\Gamma$. Since $d$ has exact polynomial growth it now follows from \cite[Theorem~5.7~(b)]{BHP25-LR} that $\Omega_\Lambda$ is minimal and uniquely ergodic.

Under the isomorphism $\Omega^G_{\Lambda} \cong (G \times \Omega(S))/\Gamma$, the basepoint $\Lambda$ corresponds to the class of $(e,\omega)$. 
If $g \in G$ stabilizes this class, then the first factor gives $g \in \Gamma$, and the second implies that $g$ stabilizes $\omega \in \Omega(S)$. 
Since $\omega$ has trivial $\Gamma$-stabilizer, the $G$-stabilizer of $\Lambda$ is trivial. This shows that $\Omega_\Lambda$ is weakly aperiodic.

If $G$ happens to be $2$-step nilpotent and $\Dd$ is special, then $\Gamma \curvearrowright \Omega(S)$ is free by Theorem~\ref{Thm-strongAP}, and hence the induced $G$-action on $(G \times \Omega(S))/\Gamma$ is free. 
Consequently, $G \curvearrowright \Omega_{\Lambda}$ is strongly aperiodic. This completes the proof.
\end{proof}
\begin{proof}[Proof of Corollary~\ref{Cor-UniqErg_Min_StrAper-Heisenberg}] 
Since the Heisenberg group $\mathbb{H}_3(\RR)$ is a $2$-step nilpotent group substitution group (Example~\ref{Ex-Heisen_SplitDil}), this is a special case of Corollary~\ref{Cor-exLR_Del}.
\end{proof}

\section{Substitution groups and their lattices}\label{Sec-Lie}

In this final section we describe the general structure of substitution groups and their lattices. 
As an application, we establish Theorem~\ref{Thm-LieMain}, which ensures the existence of split dilation data for dilation groups of dimension $\ge 2$. We also establish the existence of special dilation data for $2$-step nilpotent substitution groups as claimed in \S \ref{SpecialSub}. Unlike the preceding sections, this part requires some basic Lie theory; standard references such as \cite{HilgertNeeb, Knapp} cover the necessary background.
Since substitution groups turn out to be nilpotent Lie groups, we first recall some basic facts on nilpotent Lie algebras.

\subsection{Nilpotent and positively-gradable Lie algebras}

%%%%%%%%%%%%%%%%%%%%%%%%%%%%%%
\begin{no}
Let $\KK$ be a field. A \emph{Lie ring} is an abelian group $\L g$ together with a biadditive alternating map $[\cdot, \cdot]: \mathfrak{g}\times \mathfrak{g} \to \mathfrak{g}$ (called the \emph{Lie bracket}) satisfying the \emph{Jacobi identity} \[[X,[Y,Z]] + [Y,[Z,X]] + [Z,[X,Y]] = 0 \quad \text{for all $X,Y, Z \in \L g$.}\] It is called a \emph{$\KK$-Lie algebra} if moreover $\L g$ is a $\KK$-vector space and the Lie bracket is $\KK$-bilinear. If $\L g$ is a $\KK$-Lie algebra, then a $\KK$-vector subspace $\L h < \L g$ is called a \emph{Lie subalgebra} if it is closed under Lie brackets. If $\L h$ and $\L k$ are $\KK$-linear subspaces of a $\KK$-Lie algebra $\L g$, then we define
\[
[\L h, \L k] := \mathrm{span}_{\KK}\left(\{[X,Y] \mid X \in \L h, Y \in \L k\}\right).
\] 
If $(X_1, \dots, X_n)$ is an ordered basis of a $\KK$-Lie algebra $\L g$, then the Lie bracket on $\L g$ is determined by the scalars $\alpha_{ij}^k \in \KK$ such that
\[
[X_i, X_j] = \sum_{k=1}^n \alpha_{ij}^k X_k.
\]
These scalars are referred to as the \emph{structure constants} of $\L g$ with respect to the chosen basis. 
\end{no}
%%%%%%%%%%%%%%%%%%%%%%%%%%%%%%

From now on let $\L g$ be a finite-dimensional $\KK$-Lie algebra over some field $\KK$. We will be mostly interested in the case where $\KK \in \{\QQ, \RR, \CC\}$.
\begin{definition}
A $\KK$-linear map $\L d: \L g \to \L g$ is called a \emph{derivation} if $\L d([X,Y]) = [\L d(X), Y] + [X, \L d(Y)]$ for all $X, Y \in \L g$. A $\KK$-linear map $D: \L g \to \L g$ 
is called an \emph{automorphism} if it is invertible and satisfies $[D(X), D(Y)] = [X,Y]$ for all $X, Y \in \L g$. 
\end{definition}

%%%%%%%%%%%%%%%%%%%%%%%%%%%%%%
\begin{no}
The derivations of $\L g$ form a Lie algebra, denoted $\mathrm{Der}(\L g)$, and the automorphisms of $\L g$ form a group, denoted
$\mathrm{Aut}(\L g)$. If $\KK \in \{\RR, \CC\}$ then $\exp(X) \in \mathrm{Aut}(\L g)$ for every $X \in \mathrm{Der}(\L g)$.

Every $X \in \L g$ defines a derivation $\mathrm{ad}(X)$ given by $\mathrm{ad}(X)(Y) := [X,Y]$, and these are called \emph{inner derivations}. If $\KK \in \{\RR, \CC\}$, then every $X \in \L g$ also defines an \emph{inner automorphism} $\exp(\mathrm{ad}(X)) \in \mathrm{Aut}(\L g)$. 

A Lie subalgebra $\L h < \L g$ is called \emph{characteristic} (respectively an \emph{ideal}) if it is invariant under all (respectively all inner) derivations; we write $\L h \lhd \L g$ to indicate that $\L h$ is an ideal in $\L g$. The \emph{lower central series} of $\L g$ is the sequence of characteristic subalgebras defined by
\[
L^1(\mathfrak{g}) := \mathfrak{g} \qand L^{k+1}(\mathfrak{g}) := [ \mathfrak{g},  L^{k}(\mathfrak{g})], \quad k \geq 1.
\]
We say that $\mathfrak{g}$ is \emph{at most $s$-step nilpotent} if $L^{s+1}(\mathfrak{g}) = \{0\}$  and \emph{$s$-step nilpotent} if moreover $s=0$ or $L^{s}(\mathfrak{g}) \neq \{0\}$. If $\mathfrak{g}$ is at most $1$-step nilpotent, then it is called \emph{abelian}. 
\end{no}
%%%%%%%%%%%%%%%%%%%%%%%%%%%%%%

\begin{definition}
Let $S$ be an abelian semigroup (written additively). A family $\gamma = (\L g_s)_{s \in S}$ of subspaces of $\L g$ is called a \emph{grading} of $\L g$ \emph{over} $S$ (or an
 \emph{$S$-grading} for short) if
\[
\L g = \bigoplus_{s \in S} \L g_s \quad \text{and} \quad [\L g_{s}, \L g_{t}] \subset \L g_{s+t} \quad \text{for all }s,t \in S.
\]
A $\KK$-Lie algebra $\L g$ together with an $S$-grading is called an \emph{$S$-graded $\KK$-Lie algebra}. If such a grading exists, then $\L g$ is called \emph{$S$-gradable}.  

\item If $\gamma = (\L g_s)_{s \in S}$ is an $S$-grading of $\L g$, then the elements of \[\mathcal W_\gamma := \{s \in S \mid \L g _s \neq \{0\}\} \]
are called the \emph{weights} of $\gamma$. If $s \in \mathcal W_\gamma$ then $\L g_s$ is called the associated \emph{weight space}, and elements of $\L g_s$ are called \emph{homogeneous} of weight $s$.
\end{definition}
Note that if $\gamma$ is an $S$-grading of $\L g$, then $\mathcal W_\gamma$ is finite, since $\L g$ is finite-dimensional.

%%%%%%%%%%%%%%%%%%%%%%%%%%
\begin{no}
\label{Par-GradingDerivation} 
Given a subsemigroup $S \subset \KK$, there is a bijection between $S$-gradings of $\L g$ and $\KK$-diagonalizable derivations of $\L g$ with  $\mathrm{spec}(\L d) \subset S$, where $\mathrm{spec}(\L d)$ denotes the collection of eigenvalues of $\L d$.
 Indeed, if $\L g$ is such a derivation then there is a unique $S$-grading $\gamma_{\L d}$ of $\L g$ whose weights are the eigenvalues of $\L d$ and whose weight spaces are the corresponding eigenspaces. 

Conversely, if $\gamma$ is an $S$-grading of $\L g$, then there exists a unique derivation $\L d_\gamma$ such that $\L d_\gamma(X) = sX$  for all $s \in S$ and $X \in \L g_s$, and this derivation is
$\KK$-diagonalizable with $\mathrm{spec}(\L d_\gamma) = \mathcal W_\gamma \subset S$. In the sequel we refer to $\L d_\gamma$ as the \emph{associated derivation} of $\mathfrak{g}$.
\end{no}
%%%%%%%%%%%%%%%%%%%%%%%%%%

\begin{construction} 
Let $A$, $B$ be semigroups and let $\gamma = (\L g_\alpha)_{\alpha \in A}$ be an $A$-grading of $\L g$. We can then construct new gradings of $\L g$ as follows:
\begin{itemize}
\item If $\mathcal W_\gamma \subset A' \subset A$ for some subsemigroup $A' \subset A$ and $f: A' \to B$ is a morphism, then $f_*\gamma := (\bigoplus_{\alpha \in f^{-1}(\beta)}\L g_\alpha)_{\beta \in B}$ is a $B$-grading of $\L g$.
\item If $u \in \mathrm{Aut}(\L g)$, then $u(\gamma) := (u(\L g_\alpha))_{\alpha \in A}$ is another $A$-grading of $\L g$.
\end{itemize}
We observe that if $f: A \to B$ is a morphism and $u \in \mathrm{Aut}(\L g)$, then $u(f_*\gamma) = f_*(u(\gamma))$.
\end{construction}

\begin{definition} 
We say that two gradings $\gamma$ and $\gamma'$ of $\L g$ over semigroups $A$ and $B$ respectively are \emph{equivalent} if $\gamma' = u(f_*\gamma)$ for some isomorphism $f: A \to B$ and some $u \in \mathrm{Aut}(\L g)$. If we can choose $u = \mathrm{Id}$, then they are called \emph{strictly equivalent}.
\end{definition}

For the following lemma we denote by $\NN \subset \QQ_{>0} \subset \RR_{>0}$ the subsemigroups of $(\RR, +)$ given by all positive integers, positive rational numbers and positive real numbers respectively.

\begin{lemma}
\label{Lem-PosGrad} 
We have equivalences
\[
\L g \text{ is $\NN$-gradable} \iff \L g \text{ is $\QQ_{>0}$-gradable} \iff \L g \text{ is $\RR_{>0}$-gradable}.
\]
Moreover, for every $\RR_{>0}$-grading there is an $\NN$-grading with the same weight spaces.
\end{lemma}

\begin{proof} 
The forward implications are clear. If $\gamma$ is a $\QQ_{>0}$-grading, then $W_\gamma \subset \frac{1}{m}\NN$ for some $m \in \NN$ and 
if $f: \frac{1}{m}\NN \to \NN$ denotes multiplication by $m$, then $f_*\gamma$ is an $\NN$-grading with the same weight spaces as $\gamma$. The lemma then follows from the fact that a linear system of equations with $\QQ$-coefficients has a rational solution if it has a real solution and that the rational solutions are dense in the real solution.
\end{proof}

\begin{definition} 
\label{Def-PostiveGradable}
A Lie algebras satisfying the equivalent conditions of Lemma \ref{Lem-PosGrad} is called \emph{positively gradable}.
\end{definition}

%%%%%%%%%%%%%%%%%%%%%%%%%%
\begin{no}
\label{Par-Strat} 
An important class of $\NN$-gradings is given by so-called stratifications. Here an $\NN$-grading $\gamma$ is called a
a \emph{stratification} of $\L g$ if $\gamma = (\L g_1, \dots, \L g_n)$ with \[\L g_j \neq \{0\} \text{ for all $j \in \{1, \dots, n\}$} \qand [\L g_1, \L g_j] = \L g_{j+1} \text{ for all $j \in \{1, \dots, n-1\}$},\] and $\L g$ is called \emph{stratifiable} if it admits such a stratification. While a positively gradable Lie algebra may admit many different $\NN$-gradings, it always admits at most one stratification up to equivalence (see \cite[Rem.\ 1.3]{LeDonne_Primer}).
\end{no}
%%%%%%%%%%%%%%%%%%%%%%%%%%
\begin{example}\label{2stepStrat} Let $\L g$ be a $2$-step nilpotent Lie algebra; set $\mathfrak{g}_2 :=[\L g, \L g]$ and let $\mathfrak{g}_1$ be an arbitrary vector space complement of $\mathfrak{g}_2$ in $\mathfrak{g}$ so that $\mathfrak{g} = \mathfrak{g}_1 \oplus \mathfrak{g}_2$. Since $\mathfrak{g}$ is $2$-step nilpotent, we have $[X,Y] = 0$ for all $X \in \mathfrak{g}$ and $Y \in \mathfrak{g}_2$, which implies that $[\mathfrak{g}_1, \mathfrak{g}_1] = [\mathfrak{g}, \mathfrak{g}] = \mathfrak{g}_2$. This shows that $(\L g_1, \L g_2)$ is a stratification of $\mathfrak{g}$. 

In the context of this example, the uniqueness statement of \S \ref{Par-Strat} together with the fact that $[\L g, \L g] < \L g$ is characteristic implies the following: Every stratification of a $2$-step nilpotent Lie algebra $\L g$ is of the form $(\L g_1, \L g_2 = [\L g, \L g])$ for some complement $\L g_1$ of $[\L g, \L g]$ in $\L g$, and $\mathrm{Aut}(\L g)$ acts transitively on all vector space complements of $[\L g, \L g]$.
\end{example}
%%%%%%%%%%%%%%%%%%%%%%%%%%
\begin{no}
\label{Par-NilpotentStratGradable} 
The classes of nilpotent, stratifiable and positively gradable Lie algebras are related as follows: By definition, every stratifiable Lie algebra is positively gradable.
If a Lie algebra $\L g$ admits an $\NN$-grading $\gamma = (\L g_n)_{n \in \NN}$, then
\[
L^k(\L g) \subset \bigoplus_{n \geq k} \L g_n.
\]
In particular, if we set $s := \max \mathcal W_\gamma$, then $\L g$ is at most $s$-step nilpotent. This shows that \emph{every positively gradable Lie algebra is nilpotent}. 
\item Not every nilpotent Lie algebra is positively gradable (see e.g.\ \cite[Ex.~1.8]{LeDonne_Primer}). However, \emph{every $2$-step nilpotent Lie algebra is stratifiable, hence positively gradable} by Example \ref{2stepStrat}.
\end{no}
%%%%%%%%%%%%%%%%%%%%%%%%%%

\begin{remark} 
By combining various results from the literature, it is possible to obtain a complete classification of positively-gradable real Lie algebras in dimensions $\leq 7$; since this classification does not appear anywhere explicitly, we provide the details in Appendix \ref{AppendixCensus}. We can roughly summarize the classification as follows: Up to isomorphism, there are $33$ indecomposable nilpotent real Lie algebras of dimension $\leq 6$ (including $1$ abelian one) and all of these are positively-gradable. In dimension $7$ there are infinitely many isomorphism classes of nilpotent Lie algebras, some of which are positively-gradable and some are not. More precisely we have the following: every $7$-dimensional nilpotent real Lie algebra which decomposes as a direct sum of two lower-dimensional Lie algebras is positively-gradable. The indecomposable $7$-dimensional nilpotent real Lie algebra have been classified and grouped into $9$ infinite families and $140$ isolated examples \cite{Gong98}, and $7$ of the $9$ families and $126$ of the $140$ isolated examples are positively gradable.
\end{remark}

\begin{example}
\label{Exa-TwoMainExamples} 
We give three examples of $\NN$-graded real Lie algebras:
\begin{enumerate}[(a)]
\item If $\L g$ is an $n$-dimensional abelian real Lie algebra then any vector space decomposition $\L g = \L g_{\alpha_1} \oplus \dots \oplus \L g_{\alpha_k}$ with $k \leq n$ and $\alpha_1, \dots, \alpha_k \in \NN$ defines an $\NN$-grading. This shows that $\NN$-gradings are in general very far from unique (even up to equivalence).
\item The \emph{Heisenberg Lie algebra} is the three-dimensional Lie algebra $\L g$ with basis $(X,Y,Z)$ and Lie brackets determined by $[X,Y] = Z$ and $[X,Z] = [Y,Z] = 0$. This is $2$-step nilpotent, hence stratifiable with stratification $\L g = \L g_1 \oplus \L g_2$, where $X,Y \in \L g_1$ and $Z \in \L g_2$. In the basis $(X,Y,Z)$ the derivation associated with this stratification is given by the diagonal matrix $\mathrm{diag}(1,1,2)$ with diagonal entries $1$, $1$ and $2$.
\item We return to Example~\ref{Ex-3Step}  and consider one of the 9 infinite families of $7$-dimensional nilpotent real Lie algebras, which is labelled (147E)${}_\mu$ in the table \cite{Gong98}. For every $\mu \in \RR$ this family contains a Lie algebra $\L g_\mu$
with generators $X_1, \dots, X_7$, and all non-zero Lie brackets between basis elements are given (up to antisymmetry) by 
\[
[X_1, X_2] = X_4, \; [X_1, X_3] = -X_6, \; [X_1, X_5] = -X_7, \; [X_2, X_3] = X_5
\]
\[
[X_2, X_6] = \mu X_7, \quad [X_3, X_4] = (1-\mu) X_7,
\]
All of these Lie algebras are $3$-step nilpotent, and we have $\L g_{\mu_1} \cong \L g_{\mu_2}$ if and only if 
\[
\frac{(1-{\mu_1} + {\mu_1}^2)^3}{{\mu_1}^2(1-{\mu_1})^2} = \frac{(1-{\mu_2} + {\mu_2}^2)^3}{{\mu_2}^2(1-{\mu_2})^2}.
\]
Each $\L g_\mu$ is positively gradable; in fact, there is a unique grading such that $X_1, X_2, X_3$ are homogeneous of degree $1$, $X_4, X_5, X_6$ are homogeneous of degree $2$ and $X_7$ is homogeneous of degree $3$. In the basis $(X_1, \dots, X_7)$ the derivation associated with this grading is represented by the diagonal matrix $\mathrm{diag}(1,1,1,2,2,2,3)$.
\end{enumerate}
\end{example}

%%%%%%%%%%%%%%%%%%%%%%%%%%
\begin{no}
If $\LL/\KK$ is a field extension, then $\L g_{\LL} :=  \L g \otimes \LL$ is an $\LL$-Lie algebra with respect to the Lie bracket which on elementary tensors is given by
\begin{equation}
\label{Eq-TensorBracket}
[X \otimes \lambda, Y \otimes \mu] = [X,Y] \otimes \lambda \mu \quad (X, Y \in \L g, \lambda, \mu \in \LL).
\end{equation}
We call $\L g_{\LL}$ the \emph{$\LL$-ification} of $\L g$. Conversely, if $\L h_{\LL}$ is an $\LL$-Lie algebra, then a subset $\L h \subset \L h_{\LL}$ is called a \emph{$\KK$-form} of $\L h_{\LL}$ if it is a $\KK$-Lie subalgebra and if the bilinear map $\L h \times \LL \to \L h_{\LL}$, $(X, \lambda) \mapsto \lambda X$ induces a linear isomorphism $ \L h \otimes \LL \to \L h_{\LL}$. 

We say that an $\LL$-Lie algebra is \emph{definable over $\KK$} if it admits a $\KK$-form; equivalently, it admits a basis with structure constants $\alpha_{ij}^k \in \KK$, and any such basis is called a \emph{$\KK$-basis}.
It is obvious that a $\L g$ is nilpotent (of degree $s$) if and only if its $\LL$-ification $\L g_{\LL}$ is nilpotent (of degree $s$). We note that if $\gamma=(\L g_n)_{n \in \NN}$ is an $\NN$-grading of $\L g$, then $\gamma_{\LL} := (\L g_n \otimes_{\KK} \LL)_{n \in \NN}$ is an $\NN$-grading of $\L g_{\LL}$, hence if $\L g$ is positively gradable, then so is $\L g_{\LL}$. In characteristic $0$, also the converse holds:
\end{no}
%%%%%%%%%%%%%%%%%%%%%%%%%%

\begin{theorem}[{Cornulier, \cite[Thm.~3.25]{Co1}}]\label{Thm-Cornulier1}
If $\LL/\KK$ is a field extension of characteristic $0$, then $\L g$ is positively gradable iff $\L g_{\LL}$ is positively gradable.
\end{theorem}

\subsection{Dilation groups} 
In this subsection we are going to explain how positively gradable real Lie algebras gives rise to dilation groups in the sense of Definition \ref{Def-DilationGroup}.

%%%%%%%%%%%%%%%%%%%%%%%%%%
\begin{no}
\label{Par-BCH} 
Every real Lie algebra $\L g$ is the Lie algebra of a $1$-connected (i.e.\ connected and simply-connected) real Lie group $G$, which is unique up to isomorphism of Lie groups (i.e.\ smooth group isomorphism). 
Conversely, if $G$ is a $1$-connected real Lie group with nilpotent Lie algebra $\L g$, then $\exp: \L g \to G$ is a global diffeomorphism; we define
\[
\log := \exp^{-1}: G \to \L g \qand X \ast Y := \log(\exp(X)\exp(Y)) \quad (X,Y \in \L g).
\] 
Then $G(\L g) := (\L g, \ast)$ is a Lie group and $\exp$ defines an isomorphism of Lie groups $G(\L g) \to G$. The product $\ast$ on $\L g$ is called the \emph{Baker--Campbell--Hausdorff (BCH) product}.
\end{no}
%%%%%%%%%%%%%%%%%%%%%%%%%%

%%%%%%%%%%%%%%%%%%%%%%%%%%
\begin{no} It turns out that the Baker--Campbell--Hausdorff product is given by a universal power series. To describe this, we introduce the following notation. Let $\L g$ be a Lie ring and let $k,m \in \NN$.
Given $p = (p_1, \dots, p_k) \in \NN_0^k$ and  $q = (q_1, \dots, q_k) \in \NN_0^k$ we define
\[
c(k,m; p,q) := (k+1)(q_1 + \dots + q_k + 1)p_1! q_1! \cdots p_k!q_k! m! \in \NN
\]
and
\[
f[k,m; p,q](X,Y) := \mathrm{ad}(X)^{p_1}\mathrm{ad}(Y)^{q_1} \cdots \mathrm{ad}(X)^{p_k}\mathrm{ad}(Y)^{q_k} \mathrm{ad}(X)^m(Y) \in L^{2k+m+1}(\L g).
\] 
Then the Baker--Campbell--Hausdorff product of a nilpotent real Lie algebra $\L g$ is given by
\begin{equation}
\label{Eq-BCH}
X \ast Y = X+\underset{p_i + q_i >0}{\sum_{k,m \geq 0}} \frac{(-1)^k}{c(k,m; p,q)} f[k,m; p,q](X,Y).
\end{equation}
Note that this sum is finite, since $f[k,m; p,q](X,Y) = 0$ if $2k+m+1$ is larger then the nilpotency degree of $\L g$. In particular, if we choose a basis and identify $\L g$ with $\RR^n$ for some $n \in \NN$, then $\ast$ corresponds to a polynomial map $\RR^n \times \RR^n \to \RR^n$. In other words, $G(\L g)$ is isomorphic to (the group of $\RR$-points of) a linear algebraic group defined over $\RR$.
\end{no}
%%%%%%%%%%%%%%%%%%%%%%%%%%

%%%%%%%%%%%%%%%%%%%%%%%%%%
\begin{no}
\label{Par-StrongSubring}
If $\Lambda \subset \L g$ is a Lie subring of a nilpotent real Lie algebra, then for all $X,Y \in \Lambda$ we have $X \ast Y \in \frac{1}{M_s}\Lambda$, where 
\begin{equation}
\label{Eq-Mi}
M_i := \mathrm{lcm}\{m_1, \dots, m_i\} \quad \text{with} \quad m_i :=  \mathrm{lcm}(\{c(k,m; p,q) \mid 2k+m+1 =i\}).
\end{equation}
In particular, if $\L g_{\QQ} \subset \L g$ is a $\QQ$-Lie subalgebra of $\L g$, then $G(\L g_{\QQ}) := (\L g_{\QQ}, \ast) < G(\L g)$ is a subgroup (and in fact the group of $\QQ$-points of a linear algebraic group defined over $\QQ$ if $\L g_{\QQ}$ is finite-dimensional). 
\end{no}
%%%%%%%%%%%%%%%%%%%%%%%%%%

\begin{example}
For small nilpotency degrees we obtain $m_1 = 1$, $m_2 = 2$ and $m_3 = 12$, and
\[
X \ast Y = \left\{\begin{array}{ll} X+Y, & \text{if $\L g$ is abelian,}\\
 X + Y + \tfrac{1}{2} [X,Y],&  \text{if $\L g$ is $2$-step nilpotent,}\\
 X + Y + \frac{1}{2}[X, Y] + \frac{1}{12}[X,[X,Y]] + \frac{1}{12}[Y,[Y,X], & \text{if $\L g$ is $3$-step nilpotent.}
 \end{array} \right.
\]
\end{example}

\begin{definition} If $G$ is a $1$-connected nilpotent Lie group with Lie algebra $\L g$, then an $\NN$-grading $\gamma$ on $\L g$ is called an \emph{infinitesimal $\NN$-grading} of $G$. In this case, the pair $(G,\gamma)$  is called an \emph{$\NN$-graded} group, and we say that $G$ is \emph{$\NN$-gradable}.

\item If $(G,\gamma)$ is an $\NN$-graded group with associated derivation $\L d = \L d_\gamma$, then the trace $\kappa := \mathrm{tr}(\L d)$ is called the \emph{homogeneous dimension} of $G$. 
\end{definition}

\begin{remark}
The terminology varies in the literature: what we call an $\NN$-gradable, respectively $\NN$-graded group is simply called a \emph{gradable group}, respectively \emph{graded group} in \cite{LeDonne_Primer}. Siebert \cite{Siebert86} uses the term \emph{contractible group} instead of $\NN$-gradable group. The more analytically minded literature (e.g.\ \cite{FischerRuzhansky}) seems to prefer the term \emph{homogeneous group} instead of $\NN$-graded group, which apparently goes back to the influential monograph of Folland and Stein \cite{FollandStein}.
\end{remark}

\begin{construction}[Dilation family of an $\NN$-graded group]
\label{Const-DilFam} 
Let $(G,\gamma)$ be an $\NN$-graded group with Lie algebra $\L g$ and denote  by $\L d = \L d_\gamma$ the associated derivation of $\L g$. By \S \ref{Par-BCH} we may assume that $G = G(\L g)$, and we will do so to simplify notation. For $\lambda>0$ we define $D_{\lambda} \in \mathrm{Aut}(\L g)$ by
\[
D_{\lambda} := \exp(\log(\lambda) \cdot \L d).
\]
Then $D_{\lambda \mu} = D_{\lambda}D_{\mu}$ for all $\lambda, \mu > 0$ and since $D_{\lambda}$ preserves the Lie bracket for each $\lambda > 0$, it also preserves the BCH product on $\L g$. Consequently, $(D_{\lambda})_{\lambda > 0}$ is a one-parameter group in $\mathrm{Aut}(G(\L g))$, called the \emph{associated dilation family} of the $\NN$-graded group $(G,\gamma)$.
\end{construction}

Wr record the following formulas, which are immediate from the construction.

\begin{lemma}
\label{Lem-DilF1}
If $(G,\gamma)$ is an $\NN$-graded group with associated dilation family $(D_{\lambda})_{\lambda > 0}$, then 
\[\pushQED{\qed}
\lim_{\lambda \to 0} D_\lambda(x) = 0, \quad D_1(x) = x \quad \text{and} \quad \lim_{\lambda \to \infty} D_\lambda(x) = \infty \quad \text{for every }x \in G.
\]
\end{lemma}

The following lemma is \cite[p.~100, Equality~(3.6)]{FischerRuzhansky}. Recall that every nilpotent Lie group $G$ is unimodular, hence we do not need to distinguish between left- and right-Haar measures.

\begin{lemma}\label{HomDim} If $(G,\gamma)$ is an $\NN$-graded group of homogeneous dimension $\kappa$ with Haar measure $m_G$ and associated dilation family $(D_{\lambda})_{\lambda > 0}$, then for any Borel subset $S \subset G$ we have
\[
\pushQED{\qed}m_G(D_\lambda(S)) = \lambda^{\kappa} \cdot m_G(S).
\]
\end{lemma}

\begin{example}
\label{Exa-TwoMainGroups}
We return to Example~\ref{Exa-TwoMainExamples}.
\begin{enumerate}[(a)]
\item If $\L g$ is the real Heisenberg algebra, then using coordinates with respect to the basis $X,Y,Z$ as in Example \ref{Exa-TwoMainExamples}~(b) we get $G(\L g) \cong \RR^3$ with multiplication given by
\[
(x,y,z) \ast (x', y', z') = \left(x+x', y+y', z+z' + \frac 1 2  (xy' - yx') \right),
\]
and the dilation family associated with the stratification is given by
\[
D_{\lambda}(x,y,z) = (\lambda x, \lambda y, \lambda^2 z).
\]
\item If $\L g_\mu$ is as in Example \ref{Exa-TwoMainExamples}~(c), then $G(\L g_\mu) \cong \RR^7$ with multiplication given by
\[
\begin{pmatrix} x_1\\ x_2\\ x_3\\ x_4\\ x_5\\ x_6 \\ x_7 \end{pmatrix} \ast \begin{pmatrix} y_1\\ y_2\\ y_3\\ y_4\\ y_5\\ y_6 \\ y_7 \end{pmatrix} = \begin{pmatrix} x_1 + y_1\\ x_2 + y_2\\ x_3 + y_3\\ x_4 + y_4 + \frac 1 2 (x_1y_2-x_2y_1)\\ x_5 + y_5 + \frac 1 2 (x_2y_3 - x_3y_2) \\ x_6 + y_6 - \frac 1 2 (x_1y_3 - x_3y_1) \\ x_7 + y_7 + \frac \mu 2(x_2y_6 - x_6y_2) + \frac{1-\mu} 2 (x_3y_4-x_4y_3) \end{pmatrix}.
\]
Moreover, the dilation structure associated with the $\NN$-grading from Example \ref{Exa-TwoMainExamples}~(c) is given by
\[
D_\lambda(x_1, \dots, x_7) = (\lambda x_1, \lambda x_2, \lambda x_3, \lambda^2 x_4, \lambda^2 x_5, \lambda^2 x_6, \lambda^3 x_7).
\]
\end{enumerate}
\end{example}

\begin{definition} 
Let $(G,\gamma)$ be an $\NN$-graded group with associated dilation family $(D_{\lambda})_{\lambda > 0}$. A continuous non-negative function $|\cdot|: G \to \mathbb{R}_{\geq 0}$ is called a \emph{homogeneous quasinorm} for $(G,\gamma)$ if for all $x \in G$,
\begin{itemize}
\item[(H1)] $|x| = 0$ if and only if $x = e$.
\item[(H2)] $|x^{-1}| = |x|$.
\item[(H3)] $|D_{\lambda}(x)| = \lambda |x|$.
\end{itemize}
It is called a \emph{homogeneous norm} if moreover for all $x,y \in G$,
\begin{itemize}
\item[(H4)] $|xy| \leq |x| + |y|$.
\end{itemize}
\end{definition}

%%%%%%%%%%%%%%%%%%%%%%%%%%
\begin{no} 
We recall some basic results concerning homogeneous (quasi-)norms; see  \cite[Sec.\ 3.1.6]{FischerRuzhansky} for proofs and references. Firstly, for every homogeneous quasinorm there is a constant $C \geq 1$ such that
\[
|xy| \leq C \cdot (|x| + |y|) \quad \text{for all }x, y \in G;
\]
however, in general we cannot choose $C=1$. If $|\cdot|$ is a homogeneous norm for $(G,\gamma)$ then the we obtain a left-invariant metric on $G$ by setting
\[
d_{|\cdot|}: G \times G \to \mathbb{R}_{\geq 0}, \quad (x, y) \mapsto |x^{-1}y|.
\]
We refer to such a metric as a \emph{homogeneous metric} for $(G,\gamma)$. It follows from (H3) that if $d$ is a homogeneous metric for $(G,\gamma)$, then
\begin{equation}
\label{Eq-DilationMetricCompatible}
d(D_\lambda(x), D_\lambda(y)) = \lambda d(x,y) \quad \text{for all }x,y \in G.
\end{equation}
If $|\cdot|$ is merely a homogeneous quasinorm, then we can still define the function $d_{|\cdot|}$ as above, but it may fail to satisfy the triangle inequality. Instead there exists a constant $C > 0$ such that
\[
d_{|\cdot|}(x,y) \leq C(d_{|\cdot|}(x,z) + d_{|\cdot|}(z,y)) \quad (x,y,z \in G).
\]
Using this inequality one can show that the ``balls'' 
\[
B^{|\cdot|}(x, R) = \{y \in G \mid |x^{-1}y| < R\}
\]
with $x \in G$ and $R>0$ are open and relatively compact and generate the topology of $G$. Moreover, \eqref{Eq-OldLemma3_1} still holds for homogeneous quasinorms, whereas \eqref{Eq-OldLemma3_2}
does not (since its proof involves the triangle inequality).
\end{no}
%%%%%%%%%%%%%%%%%%%%%%%%%%

\begin{example}[Explicit homogeneous quasinorms]
Let $(X_1, \dots, X_n)$ be an eigenbasis of $\mathfrak{g}$ with respect to $\L d_\gamma$ with corresponding eigenvalues $(\mu_1, \dots, \mu_n)$. We then obtain a family of homogeneous quasi-norms for $(G,\gamma)$ by setting
\[
\left|\exp\left(\sum_{i=1}^n \alpha_i X_i\right) \right|_\infty := \max_{i=1, \dots, n} |\alpha_i|^{1/\mu_i} \qand \left|\exp\left(\sum_{i=1}^n \alpha_i X_i\right) \right|_p := \left(\sum_{i=1}^n |\alpha_i|^{p/\mu_i}\right)^{1/p}  
\]
for $1 \leq p < \infty$. In particular, homogeneous quasinorms exist for every $\NN$-graded group. The following proposition shows that every $\NN$-gradable group even admits a homogeneous norm, but it comes at the cost of not being able to write down an explicit formula.
\end{example}

\begin{proposition}[Hebisch-Sikora, \cite{HeSi90}] 
\label{Prop-HebischSikora}
If $(G,\gamma)$ is an $\NN$-graded group, then for any homogeneous quasinorm $|\cdot|$ for $(G,\gamma)$ there exist $C \geq 1$ and a homogeneous norm $\|\cdot\|$ for $(G, \gamma)$ such that
\begin{equation}
\label{Eq-NormQuasinorm}
C^{-1} |x| \leq \|x\|\leq C|x| \quad \text{ for all }x \in G.
\end{equation}
In fact, \eqref{Eq-NormQuasinorm} holds for every homogeneous norm $\|\cdot\|$ for $(G, \gamma)$.
In particular, there exists a homogeneous metric for $(G,\gamma)$, and all homogeneous metrics on $G$ are left-invariant, continuous, proper and bi-Lipschitz to each other.
\end{proposition}

From Lemma \ref{Lem-DilF1} and \eqref{Eq-DilationMetricCompatible} we deduce:

\begin{corollary}
\label{Cor-HomogeneousToDilation} 
If $(G,\gamma)$ is an $\NN$-graded group with associated dilation family $(D_\lambda)_{\lambda > 0}$ and $d$ is a homogeneous metric for $(G,\gamma)$, then $(G, d, (D_\lambda)_{\lambda > 0})$ is a dilation group.
\end{corollary}

\begin{definition} 
\label{Def-HomDilGroup}
We refer to $(G, d, (D_\lambda)_{\lambda > 0})$ as in Corollary~\ref{Cor-HomogeneousToDilation} as a \emph{homogeneous dilation group} with underlying  $\NN$-graded group $(G,\gamma)$.
\end{definition}

\begin{lemma}[Volume growth of homogeneous quasinorms]
\label{Lem-VolGrowthG} 
If $(G,\gamma)$ is an $\NN$-graded group of homogeneous dimension $\kappa$ and $|\cdot|$ is a homogeneous quasinorm for $(G,\gamma)$, then there exists $C>0$ such that for all $x,y \in G$ and $r>0$ we have
\[
m_G(B^{|\cdot|}(x, r)) =C \cdot r^{\kappa} \qand \lim_{s\to \infty} \frac{m_G(B^{|\cdot|}(x, r+s))}{m_G(B^{|\cdot|}(y, s))} = 1.
\]
\end{lemma}

\begin{proof} Set $B^{|\cdot|} := B^{|\cdot|}(e, 1)$ and $C := m_G(B^{|\cdot|})$. Observe that \eqref{Eq-OldLemma3_1} still holds for homogeneous quasinorms, hence
it follows with Lemma \ref{HomDim} that  for all $x \in G$ and $r > 0$ we have
\[
m_G(B^{|\cdot|}(x, r)) = m_G(x \cdot B^{|\cdot|}(e, r)) = m_G(B^{|\cdot|}(e, r)) = m_G(D_r(B^{|\cdot|}) =  r^\kappa \cdot m_G(B^{|\cdot|})= C \cdot r^{\kappa}.
\]
Since $\frac{C \cdot (r+s)^\kappa}{C \cdot s^\kappa} \to 1$ as $s \to \infty$, the lemma follows.
\end{proof}

\begin{corollary}[Exact polynomial growth]
\label{Cor-exapolygrowth} 
If $(G, d, (D_\lambda)_{\lambda > 0})$ is a homogeneous dilation group, then $(G,d)$ has exact polynomial growth.
\end{corollary}

We conclude this section by stating the classification of underlying groups of dilation groups, due to Siebert \cite{Siebert86}.

\begin{theorem}[Siebert \cite{Siebert86}]
\label{Thm-Siebert}
For a connected lcsc group $G$ the following are equivalent.
\begin{enumerate}[(i)]
\item $G$ is the underlying group of a dilation group $(G, d, (D_\lambda)_{\lambda > 0})$.
\item There exists a group homomorphism $D: \RR^{> 0} \to \mathrm{Aut}(G)$, $\lambda \mapsto D_\lambda$ such that for every $g \in G$ we have
\[
\lim_{\lambda \to 0} D_\lambda(g) = e, \quad D_1(g) = g \quad \text{and} \quad \lim_{\lambda \to \infty} D_\lambda(g) = \infty.
\]
\item There exists an automorphism $D \in \mathrm{Aut}(G)$ such that $D^{-k}(g) \to e$ for all $g \in G$.
\item $G$ is an $\NN$-gradable group, hence in particular a $1$-connected nilpotent Lie group.
\end{enumerate}
\end{theorem}

\begin{proof} The non-trivial implication is (iii)$\implies$(iv), and this is contained in \cite[Cor.\ 2.4]{Siebert86}. Now assume (iv) and fix an $\NN$-grading on $G$. If $(D_\lambda)_{\lambda > 0}$ denotes the associated dilation family, then (ii) holds by Lemma \ref{Lem-DilF1}  and (i) holds by Corollary~\ref{Cor-HomogeneousToDilation}. Finally, both (i) or (ii) imply (iii) by setting $D := D_\lambda$ for some $\lambda > 1$.
\end{proof}

\begin{remark} 
If we combine Theorem~\ref{Thm-Siebert} with \cite[Theorem~D]{CKLNO21}, then we can obtain a purely metric characterization of the underlying metric groups of dilation groups: suppose that $(X,d_X)$ is any connected locally compact metric space whose isometry group acts transitively on $X$ and such that there exists a bijection $D: X \to X$ and $\lambda > 1$ such that $d_X(D(x),D(y)) = \lambda d_X(x,y)$ for all $x,y \in X$. Then there exists a dilation group $(G, d, (D_\lambda)_{\lambda > 0})$ and an isometry $f: (X,d_X) \to (G, d)$ which intertwines $D$ with an automorphism of $G$ (which then necessarily satisfies Condition (iii) from Theorem~\ref{Thm-Siebert}).

It follows from Theorem~\ref{Thm-Siebert} and Corollary \ref{Cor-HomogeneousToDilation} that if a group underlies a dilation group, then it underlies a \emph{homogeneous} dilation group. This does not mean, however, that every dilation group is homogeneous: Every real $(n \times n)$-matrix $D$ with positive spectrum defines a dilation family on the abelian Lie group $(\RR^n, +)$ via $D_\lambda := \exp(\log(\lambda) D)$, but the corresponding dilation group is homogeneous iff the matrix is diagonalizable with spectrum in $\NN$. 

Nevertheless we will focus on homogeneous dilation groups, since this will allow us to make use of Corollary~\ref{Cor-exapolygrowth}.
\end{remark}

\subsection{Adapted lattices in homogeneous dilation groups} 
In this subsection we are going to show that every homogeneous dilation group which admits a lattice even admits an \emph{adapted} lattice in the sense of Definition~\ref{Def-AdaptedLattice}. 

%%%%%%%%%%%%%%%%%%%%%%%%%%
\begin{no}
\label{Par-StrongShit}
From now on let $G$ be a $1$-connected $s$-step nilpotent Lie group with Lie algebra $\L g$. Since $G$ is nilpotent, every lattice in $G$ is automatically cocompact, and we are interested in classifying these lattices up to commensurability. We say that a discrete additive subgroup $\Lambda \subset \L g$ is cocompact if $\L g/\Lambda$ is compact. Equivalently, there exists an $\RR$-basis $(X_1, \dots, X_d)$ of $\L g$ such that $\Lambda = \ZZ X_1 \oplus \dots \oplus \ZZ X_d$, and hence \[\Lambda_{\QQ} := \mathrm{span}_{\QQ}(\Lambda) = \QQ X_1 \oplus \dots \oplus \QQ X_d\]
is a $\QQ$-form of $\L g$. By basic linear algebra, two cocompact discrete subgroups of $\L g$ are commensurable if and only if they span the same $\QQ$-form.

Now let $\Lambda \subset \L g$ be an additive subgroup. Following Cornulier \cite[Def.\ A.2]{Co2}, we say that $\Lambda$ is a \emph{strong subring} of $\L g_{\QQ}$ if for all $i \geq 2$ and $Y_1, \dots, Y_i \in \Lambda$ we have
\begin{equation}
\label{Eq-CornulierStrong}
\mathrm{ad}(Y_1) \circ \dots \circ \mathrm{ad}(Y_{i-1})(Y_i) \in m_i \cdot \Lambda,
\end{equation}
where $m_i$ is as in \eqref{Eq-Mi}. Note that for $i=2$ this says that $\Lambda$ is closed under $(X,Y) \mapsto \frac 1 2[X,Y]$, and hence every strong subring is a Lie subring. 

Note that if $\Lambda \subset \L g$ is a strong subring, then $\Lambda$ is closed under $\ast$ by \eqref{Eq-BCH} and hence $\exp(\Lambda) \subset G$ is a subgroup. In particular, if $\Lambda \subset \L g$ is a cocompact strong subring, then $\exp(\Lambda) < G$ is a lattice; we refer to such lattices as \emph{strong lattices}. By definition, strong lattices in $G$ are in bijection with cocompact strong subrings of $\L g$.

If an additive subgroup $\Lambda \subset \L g$ is generated by finitely many elements $X_1, \dots, X_n$, then it is strong if \eqref{Eq-CornulierStrong} holds for all $Y_1, \dots, Y_i \in \{X_1, \dots, X_n\}$, and this condition can always be achieved by replacing $X_1, \dots, X_n$ by integer multiples. We apply this as follows:

If $(X_1', \dots, X_n')$ is any rational basis for $\L g$ then we can find integers $m_1, \dots, m_n$ such that the elements $X_j := m_j X_j'$ have the following properties: 
$(X_1, \dots, X_n)$ is a $\ZZ$-basis of a strong cocompact subring $\Lambda \subset \L g$ and a $\QQ$-basis of the rational form $\L g_{\QQ} =  \mathrm{span}_{\QQ}(X_1', \dots, X_n')$ of $\L g$, and $\exp(\Lambda) < G$ is a strong lattice. We then refer to $(X_1, \dots, X_n)$ as a \emph{strong rational basis} of $\L g$.

To summarize, every rational form of $\L g$ admits a strong rational basis and hence contains a strong cocompact subring of $\L g$; such strong cocompact subrings correspond bijectively to strong lattices in $G$.
We combine this with the following result, which is contained in \cite[Lemma A. 4]{Co2}.
\end{no}
%%%%%%%%%%%%%%%%%%%%%%%%%%

\begin{lemma}
\label{Lem-CoStrong}
Let $G$ be a $1$-connected nilpotent Lie group with Lie algebra $\L g$.
\begin{enumerate}[(a)]
\item If $\Lambda \subset \L g$ is a cocompact discrete Lie subring, then there exist cocompact strong subrings $\Lambda_1, \Lambda_2 \subset \L g$ and $C \in \NN$ such that 
\[
\Lambda_1 \subset \Lambda \subset \Lambda_2 \qand [\Lambda_2:\Lambda_1] \leq C.
\]
\item If $\Gamma<G$ is a lattice, then there exist strong lattices $\Gamma_1, \Gamma_2 < G$ and $C \in \NN$ such that
\[
\Gamma_1 \subset \Gamma \subset \Gamma_2 \qand [\Gamma_2:\Gamma_1] \leq C.
\]
\end{enumerate}
\end{lemma}

%%%%%%%%%%%%%%%%%%%%%%%%%%
\begin{no}
\label{Par-MoreStrong}
Cornulier actually proves in \cite[Lemma A.4]{Co2} the stronger statement that the constants $C$ in (i) and (ii) depend only on the dimension of $\L g$, but we will not need this fact here. From \S \ref{Par-StrongShit} and Lemma~\ref{Lem-CoStrong} we deduce that we have natural bijections between the following sets:
\begin{itemize}
\item the set of $\QQ$-forms of $\L g$;
\item the set of commensurability classes of cocompact discrete Lie subrings of $\L g$;
\item  the set of commensurability classes of cocompact discrete strong subrings of $\L g$;
\item the set of commensurability classes of strong lattices in $G$;
\item the set of commensurability classes of lattices in $G$.
\end{itemize}
Explicitly, if $\L g_{\QQ}$ is a $\QQ$-form of $\L g$, then we obtain a (strong) lattice $\Gamma < G$ by choosing a basis of $\L g_{\QQ}$, rescaling this basis into a strong rational basis $(X_1, \dots, X_n)$ of $\L g$ and then taking $\Gamma := \exp(\mathrm{span}_{\ZZ}(X_1, \dots, X_n))$. This construction produces all lattices in $G$ up to commensurability. In particular, $G$ contains a lattice if and only if is definable over $\QQ$.
\end{no}
%%%%%%%%%%%%%%%%%%%%%%%%%%

\begin{construction}[Adapted lattices in $\NN$-gradable groups]
\label{Con-BuildALattice}
Let $(G, \gamma)$ be an $\NN$-graded group with Lie algebra $\L g$ and assume that $\L g$ is definable over $\QQ$. Note that these assumptions imply in particular that $G$ is a $1$-connected nilpotent Lie group and that $G$ contains a lattice by \S \ref{Par-MoreStrong}.

Now choose a $\QQ$-form $\L g_{\QQ}$ of $\L g$. By Theorem \ref{Thm-Cornulier1} there then exists an $\NN$-grading $\gamma_{\QQ}$ on $\L g_{\QQ}$ which induces an $\NN$-grading $\gamma := (\gamma_{\QQ})_{\RR}$ on $\L g$. We denote by $\L d = \L d_{\gamma}$ the associated derivation and by  $(D_{\lambda})_{\lambda >0}$ the associated dilation family. 

We now choose an eigenbasis basis $(X_1', \dots, X_n')$ of $\L g_{\QQ}$ for $\L d$ and rescale it into a strong rational basis $(X_1, \dots, X_n)$ for $\L g$; then $(X_1, \dots, X_n)$ is still an eigenbasis for $\L d$ and freely generates $\Lambda := \mathrm{span}_{\ZZ}(X_1, \dots, X_n)$. In particular, $\L d(\Lambda) \subset \Lambda$ and hence $\Gamma := \exp(\Lambda)$ is a strong lattice in $G$ such that $D_\lambda(\Gamma) \subset \Gamma$ for all $\lambda \in \NN$. In particular, if $d$ is a homogeneous metric for $(G, \gamma)$, then $\Gamma$ is an adapted lattice for $(G, d, (D_{\lambda})_{\lambda >0})$. We refer to a lattice arising from this construction as a \emph{strong adapted lattice} for $(G, d, (D_{\lambda})_{\lambda >0})$.
\end{construction}

\begin{theorem}
\label{Thm-GoodGroups} 
For an lcsc group $G$ the following are equivalent.
\begin{enumerate}[(i)]
\item $G$ is a $1$-connected Lie group whose Lie algebra admits both a basis with rational structure constants and a derivation with positive eigenvalues.
\item $G$ is a $1$-connected nilpotent Lie group whose Lie algebra $\L g$ is definable over $\QQ$ and positively gradable. 
\item $G$  is a $1$-connected nilpotent Lie group underlying a homogeneous dilation group with an adapted lattice.
\item $G$ underlies a dilation group with an adapted lattice.
\item $G$ underlies a dilation group and contains a lattice.
\end{enumerate}
\end{theorem}

\begin{proof} 
The equivalence (i)$\iff$(ii) follows from \S \ref{Par-GradingDerivation} and  \S \ref{Par-NilpotentStratGradable}, and (ii)$\implies$(iii) follows from Construction \ref{Con-BuildALattice}. The implications (iii)$\implies$(iv)$\implies$(v) are obvious.
Finally, if $G$ underlies a dilation group, then it is a $1$-connected nilpotent Lie group by Theorem~\ref{Thm-Siebert}, and if it furthermore contains a lattice, then its Lie algebra must be definable over $\QQ$ by \S \ref{Par-MoreStrong}. This establishes (v)$\implies$(ii) and finishes the proof.
\end{proof}
By definition, the groups satisfying the equivalent conditions of Theorem \ref{Thm-GoodGroups} are precisely the \emph{substitution groups} as defined in the introduction. For lattices in substitution groups (not necessarily adapted) we have the following lattice point counting result: 

\begin{lemma}[Lattice point counting]
\label{Lem-VolGrowthGamma}
Let $(G,\gamma)$ be an $\NN$-graded group of homogeneous dimension $\kappa$ and let $\Gamma$ be a lattice in $G$. Set $B_r^\Gamma := \{ \gamma\in\Gamma \,|\, \|\gamma\|<r \}$, where $\|\cdot\|$ is a fixed homogeneous norm on $G$.
Then there exist a constants $C_1 >0$ and a function $\vartheta:(r_0, \infty) \to [0, \infty)$ with $\lim_{r \to \infty} \vartheta(r)  = 0$ such that
	\[
		C_1 \cdot r^{\kappa} - \vartheta(r) \cdot r^{\kappa} \leq \big| B_r^\Gamma \big| \leq C_1 \cdot r^{\kappa} + \vartheta(r) \cdot r^{\kappa} \quad \mbox{ for all }  r > r_0
		\] 
 In particular, $\lim_{s \to \infty}\frac{|B_{r + s}^\Gamma|} { |B_s^\Gamma|}  = 1$ and $\Gamma$ has exact polynomial growth with respect to the restriction of any homogeneous metric on $G$.
\end{lemma}

\begin{proof}  Since $G$ is nilpotent, the lattice $\Gamma$ is uniform and thus we may fix a relatively compact fundamental domain $V$ containing $e$ as an inner point. Let $C$ be the constant from Lemma \ref{Lem-VolGrowthG} and set $C_1 := \frac{C}{m_G(V)}$. Given $r>0$ we now abbreviate $B_r := \{ x\in G \,|\, \|x\|<r \}$ and $(B_r)_{\Gamma} := B_r^\Gamma V$. Since $V$ is relatively compact, there exists $r_0$ such that $V \subset B_{r_0}$, and since $V$ is a fundamental domain we have
\begin{equation}
\label{Eq-InnerApproximationBla}
(B_r)_{\Gamma} = \bigsqcup_{\gamma \in (B_r)_\Gamma} \gamma V, \quad \text{hence} \quad |B_r^\Gamma| = \frac{m_G((B_r)_\Gamma)}{m_G(V)} = \frac{m_G((B_r)_\Gamma)}{\mathrm{covol}(\Gamma)}.
\end{equation}
It follows from \eqref{Eq-OldLemma3_2} that $(B_r)_\Gamma \subset B_{r}V \subset B_{r}B_{r_0} \subset B_{r+r_0}$. Similarly one obtains $B_{r-r_0} \subset (B_r)_\Gamma$ for all $r \geq r_0$. We thus deduce from \eqref{Eq-InnerApproximationBla} and Lemma~\ref{Lem-VolGrowthG} that
	\[
	\frac{C (r-r_0)^\kappa}{\mathrm{covol}(\Gamma)} \leq \big| B_r^\Gamma | \leq \frac{C (r+r_0)^\kappa}{\mathrm{covol}(\Gamma)} \quad \mbox{ for all }  r > r_0. 
	\]
If we now define $\vartheta: (r_0 , \infty) \to [0, \infty)$ by
	\[
	\vartheta(r) := \frac{C}{\mathrm{covol}(\Gamma)} \cdot \Bigg( \frac{(r +r_0)^\kappa-(r -r_0)^\kappa}{r^\kappa} \Bigg),
	\]
then $\vartheta(r) = O(1/r)$ and the first statement follows. For the second statement we then observe that
	\begin{align*}
	\limsup_{r \to \infty} \frac{\big| B_{r+s}^\Gamma \big|}{\big| B_r^\Gamma \big|}  \leq \limsup_{r \to \infty}  \frac{\frac{C}{m_G(V)} (r + s)^{\kappa} + \vartheta(r)(r + s)^{\kappa}}{\frac{C}{m_G(V)} r^{\kappa} - \vartheta(r) r^{\kappa}} = 1
	\leq \liminf_{r \to \infty} \frac{\big| B_{r+s}^\Gamma \big|}{\big| B_r^\Gamma \big|}.
	\end{align*}
This finishes the proof.
\end{proof}
\begin{corollary}\label{ExExPolGr} Every substitution group admits a dilation datum of exact polynomial growth.\qed
\end{corollary}

\subsection{Split dilation data for substitution groups} 
In this subsection we are going to show that if $G$ is a substitution group of dimension $\geq 2$, then
it always admits a split dilation datum in the sense of Definition \ref{Def-SplitDilationDatum}, thereby establishing Theorem \ref{Thm-LieMain}. As a first step we show:
\begin{proposition}
\label{Prop-GroupsSplit}
If $G$ is a substitution group of dimension $\geq 2$, then $G \cong H \times_\beta \RR^m$ for some $m \in \NN$, non-compact Lie group $H$ and normalized cocycle $\beta$.
\end{proposition}

\begin{remark}[The abelian case]
\label{Rem-AbelianSplittingTrivial}
Note that if $\dim G = 1$, then $G \cong \RR$ does not admit a splitting as in the theorem, hence the assumption $\dim G \geq 2$ is necessary. If $G$ is abelian of dimension $d \geq 2$, then $G \cong \RR \times \RR^{d-1}$, hence Proposition \ref{Prop-GroupsSplit} holds. We now assume that $G$, and hence its Lie algebra $\L g$, is non-abelian.
\end{remark}

%%%%%%%%%%%%%%%%%%%%%%%%%%
\begin{no}
\label{Con-SplittingSetting}
From now on let $G$ be a non-abelian substitution group with Lie algebra $\L g$. We assume without loss of generality that $G = G(\L g)$ and fix a rational form $\L g_{\QQ}$ of $\L g$. By Theorem~\ref{Thm-Cornulier1} the Lie algebra $\L g_{\QQ}$ is $\NN$-gradable; we fix an $\NN$-grading $\gamma_{\QQ}$ of $\L g_{\QQ}$ with associated derivation $\L d$. We then denote by $(D_{\lambda})_{\lambda > 0}$ the associated dilation family and choose a compatible homogeneous metric $d$ on $G$.

As in Construction~\ref{Con-BuildALattice} we can choose a strong eigenbasis $\cB = (X_1, \dots, X_d)$ of $\L g_{\QQ}$ for $\L d$ with eigenvalues $\mu_1, \dots, \mu_d$. We assume that the order is chosen such that
\[\mu_1 \leq \dots \leq \mu_d\] and hence the spectral radius of $\L d$ is given by $\rho(\L d) := \mu_d$. We also set
\[j_o := \min\{j \in \{1, \dots, d\}\mid \mu_j = \rho(A)\} \qand m := d-j_o+1 > 0.\]
\end{no}
%%%%%%%%%%%%%%%%%%%%%%%%%%

\begin{notation} If $\L h$ is a Lie algebra, $V$ is a vector space and $\omega: \L h \times \L h \to V$ is a \emph{Lie algebra cocycle}, i.e.\ an alternating bilinear map satisfying
\[
\omega([[X,Y], Z]) = \omega([X,Z], Y) - \omega([Y,Z], X),
\]
then we denote by $\L h \oplus_{\omega} V$ the Lie algebra with underlying vector space is $\L h \oplus V$ and Lie bracket
\[
[(X_1, v_1), (X_2, v_2)] = ([X_1, X_2], \omega(X_1, X_2)).
\]
\end{notation}

\begin{construction}
\label{Con-LieAlgSplitting}
Using our strong eigenbasis $(X_1, \dots, X_d)$ of $\L g_{\QQ}$ we decompose $\L g$ as follows: We denote by $\mathfrak{g}_V = \mathrm{span}_{\RR}(X_{j_o}, \dots, X_{d})$ the $\L d$-eigenspace of $\mathfrak{g}$ with eigenvalue $\mu_d = \rho(\L d)$. For every $j \in \{1, \dots, d\}$ and $Y \in \mathfrak{g}_V$ we then have 
\[
[X_j, Y] \in \mathfrak{g}_{\rho(\L d) + \mu_j} =\{0\}, \quad \text{and hence}\quad [X_j, Y] = 0.
\]
This shows that $\mathfrak{g}_V$ is a central ideal of $\mathfrak{g}$; the quotient $\mathfrak{g}_H := \mathfrak{g}/\mathfrak{g}_V$ has basis $\overline{X}_1, \dots, \overline{X}_{j_o-1}$,where $\overline{X}_j$ denotes the image of $X_j$ in $\mathfrak{g}_H$ under the canonical projection,  and there exists a unique cocycle $\omega: \L g_H \times \L g_H \to \L g_V$ such that 
\begin{equation}
\label{Eq-Embedd-Horiz+Vert}
\iota_{\mathcal{B}}: \mathfrak{g} \to \mathfrak{g}_H \oplus_\omega \mathfrak{g}_V, \quad 
	X_j \mapsto 
		\left\{\begin{array}{ll} 
			(\overline{X}_j, 0), & j < j_o,\\ 
			(0, X_j), & j \geq j_o, 
		\end{array}\right.
\end{equation}
is an isomorphism. We refer to $ \mathfrak{g}_H$ and $\mathfrak{g}_V$ as the \emph{horizontal} and \emph{vertical part} of $\L g$ respectively. We observe that $\L g_H$ inherits a natural $\NN$-grading $\gamma_H$ from $\gamma$ by declaring that $\overline{X}_j$ is of homogeneous degree $\mu_j$. We also note that $\L g_H$ is non-trivial, since $\L g$ is non-abelian.
\end{construction}

\begin{construction}
\label{Con-GroupSplitting} 
We can lift Construction~\ref{Con-LieAlgSplitting} to the group level: If we set $G_H := G(\L g_H)$, then $G_H \cong \L g_H$ is non-compact, and the canonical projection $\L g \to \L g_H$ lifts to a canonical projection
\[
p_H: G \to G_H, \quad p_H\left(\sum_{i=1}^d \alpha_i X_i \right) = \sum_{i=1}^{j_o-1} \alpha_i \overline{X}_i.
\]
We now define a section of $p_H$ by
\[
\sigma: G_H \to G, \quad  \sum_{i=1}^{j_o-1} \alpha_i \overline{X}_i \mapsto  \sum_{i=1}^{j_o-1} \alpha_i {X}_i,
\]
which defines a cocycle $\beta: G_H \times G_H \to \RR^m$ as follows: Since for all $g_H, h_H \in G_H$ we have
$\sigma(g_H h_H)^{-1} \sigma(g_H) \sigma(h_H) \in \ker(p_H)$, we can define
\[
\beta(g_H, h_H) := (\alpha_{j_o}, \dots, \alpha_n)^\top, \quad \text{where} \quad \sigma(g_H h_H)^{-1} \sigma(g_H) \sigma(h_H) = \sum_{j= j_o}^d \alpha_j X_j.
\]
One checks that this defines indeed a cocycle and that there is an an isomorphism
\begin{equation}
\label{Eq-icB}
i_{\cB}: G \to G_H \times_\beta \RR^m, \quad \sum_{j=0}^d \alpha_j X_j \mapsto \left( \sum_{i=1}^{j_o-1} \alpha_i \overline{X}_i, (\alpha_{j_o}, \dots, \alpha_n)^\top\right).
\end{equation}
\end{construction}

\begin{proof}[Proof of Proposition~\ref{Prop-GroupsSplit}] Combine Remark~\ref{Rem-AbelianSplittingTrivial} and Construction~\ref{Con-GroupSplitting}.
\end{proof}

%%%%%%%%%%%%%%%%%%%%%%%%%%
\begin{no} Let $G$ be a non-abelian substitution group with Lie algebra $\L g$ and let $(G,d, (D_{\lambda})_{\lambda > 0})$ be the dilation group from \S \ref{Con-SplittingSetting}, and identify $G$ with $G_H \times_\beta \RR^m$ via the isomorphism $i_{\cB}$ from \eqref{Eq-icB}.
Explicitly,
\[
D_\lambda(g_H, v) = (D^H_{\lambda}(g_H), \lambda^{\rho(\L d)} v),
\]
where $(D^H_{\lambda})_{\lambda>0}$ is the dilation family on $G_H$ corresponding to the induced grading $\gamma_H$ of $\L g_H$ and $\rho(\L d) \in \NN$ is the spectral radius of $\L d$.
\end{no}
%%%%%%%%%%%%%%%%%%%%%%%%%%

\begin{construction} 
With notation as in \S \ref{Con-SplittingSetting} we now define $\Lambda := \mathrm{span}_{\ZZ}(X_1, \dots, X_d)$; by construction, this is a strong cocompact subgroup of $\L g$, and hence $\Gamma := \exp(\Lambda)<G$ is a lattice in $G$. Since $\cB$ is an eigenbasis for $\L d$, this lattice is adapted.

If $p_H: G \to G_H$ is as in Construction~\ref{Con-GroupSplitting}, then $\Gamma_H := p_H(\Gamma) = \exp(\mathbb{Z} \overline{X}_{1} + \dots + \mathbb{Z} \overline{X}_{j_o -1})$. Consequently, the section $\sigma$ restricts to a section of $p_H|_{\Gamma}: \Gamma \to \Gamma_H$. Since $\cB$ is a strong basis we have $\beta(\Gamma_V, \Gamma_V) \subset \ZZ^m$ and
$i_{\cB}$ restricts to an isomorphism $\Gamma \cong \Gamma_H \times_{\beta} \ZZ^m$. Moreover, for every $\lambda \in \NN$ we have $D_\lambda(\Gamma) \subset \Gamma$. We have thus established the following result.
\end{construction}

\begin{proposition}
\label{Prop-JustLatticeAndDilation}
Let $G$ be a non-abelian substitution group. Then $G \cong H \times_\beta \RR^m$ for some $m \in \NN$, non-compact Lie group $H$ and normalized cocycle $\beta$, and $H \times_\beta \RR^m$ underlies a dilation group and contains an adapted lattice $\Gamma$ satisfying conditions (A1) and (A2) from Definition \ref{Def-SplitDilationDatum} with $\Gamma_0= \ZZ^m$.
\end{proposition}

\begin{example}[The $2$-step nilpotent case]\label{Ex-Splitting_2Step} We now explain why every (non-abelian) $2$-step nilpotent substitution group $G$ admits a special substitution datum in the sense of \S \ref{SpecialSub}. For this we recall that 
the Lie algebra $\L g$ admits a stratification $\mathfrak g = \mathfrak g_1 \oplus [\mathfrak g, \mathfrak g]$ which is unique up to automorphisms. The corresponding derivation $\L d$ has spectrum $\{1,2\}$ and thus spectral radius $2$, and if $d := \dim \L g$ and $m := \dim [\L g,\L g]$, then with notation as above $\L g_H \cong \RR^{d-m}$ is abelian. The proof of Proposition \ref{Prop-JustLatticeAndDilation} then shows that there exist
\begin{itemize}
\item a normalized cocycle $\beta:\RR^{d-m} \times \RR^{d-m} \to \mathbb{R}^m$ such that $G \cong \RR^{d-m} \times_\beta \RR^m$;
\item a dilation family $D_{\lambda}$ on $G$ which corresponds to $D_\lambda(u, v), = (\lambda u, \lambda^2 v)$;
\item  a lattice $\Gamma_H < \RR^{d-m}$ such that $\beta(\Gamma_H \times \Gamma_H) \subset \ZZ^m$.
\end{itemize}
Note that since $\Gamma_H < \RR^{d-m}$ is a subgroup we automatically have $\lambda \Gamma_H < \Gamma_H$ for every $\lambda \in \NN$; this implies that the lattice $\Gamma < G$ corresponding to $\Gamma_H \times_\beta \ZZ^m$
satisfies $D_\lambda(\Gamma) \subset \Gamma$ for every $\lambda \in \NN$, and hence is split adapted to $((D_\lambda)_{>0}, d)$, where $d$ is a homogeneous metric for $(G, (D_{\lambda})_{\lambda > 0})$. %Finally, $\Gamma$ has exact polynomial volume growth by Lemma \ref{Lem-VolGrowthGamma}.
\end{example}

\begin{remark}
\label{Rem-AbelianLatticeDilation}
Proposition~\ref{Prop-JustLatticeAndDilation} also holds if $G$ is abelian of dimension $m \geq 2$. Indeed, in this case $G \cong \RR^{m-1} \times \RR$ and we can choose $\Gamma = \ZZ^{m-1} \times \ZZ$ and the dilation family whose underlying derivation is just the identity.
\end{remark}
To complete the proof of Theorem~\ref{Thm-LieMain} it remains to construct good fundamental domains for lattices in central extensions.

%%%%%%%%%%%%%%%%%%%%%%%%%%
\begin{no}\label{A1A2OK}
From now on let $G$ be a group of the form $G = H \times_\beta \RR^m$, where $H$ is a non-compact lcsc group and $\beta: H \times H \to \RR$ is a normalized cocycle. We assume that we are given a dilation group $(G,d, (D_{\lambda})_{\lambda > 0})$ and a lattice $\Gamma < G$ satisfying conditions (A1) and (A2) from Definition~\ref{Def-SplitDilationDatum} with $\Gamma_0= \ZZ^m$. We now discuss a construction of fundamental domains for the $\Gamma$-action on $G$ by left-multiplication.
\end{no}
%%%%%%%%%%%%%%%%%%%%%%%%%%

\begin{construction}
\label{Con-V0Explicit} 
We first consider the fundamental domain $V_0 := [-1/2, 1/2)^m$ for the action of $\ZZ^m$ on $\RR^m$ and fix $\lambda_0 > 1$ such that $D_{\lambda_0}(\Gamma) \subset \Gamma$. The latter assumption implies that $\lambda_0^\rho \ZZ^m \subset \ZZ^m$ and hence $\lambda_0^{\rho} \in \ZZ$. We now define
\[
F(\lambda_0) := \lambda_0^{\rho} V_0 \cap \ZZ^m = \begin{cases} 
		\{- j , - j +1,\,  \dots,  j -2,  j -1\}^m,\qquad 	&\lambda_0^{\rho} = 2j \text{ even},\\  
		\{- j , -j +2,\,  \dots, j-2, j \}^m,			& \lambda_0^{\rho} = 2j+1 \text{ odd},
	\end{cases}
\]
and claim that
\begin{equation}
\label{Eq-IntersectionV0}
\bigcap_{x \in  F(\lambda_0)} (x+ F(\lambda_0)) \neq \emptyset.
\end{equation}
Indeed, if $\lambda_0^{\rho}$ is even (respectively, odd), then the intersection in \eqref{Eq-IntersectionV0} contains $(-1,\ldots,-1)^\top$ (respecticely  $(0,\ldots,0)^\top$). This shows that $V_0$ satisfies Property (A4) from Definition \ref{Def-SplitDilationDatum}.
\end{construction}

%%%%%%%%%%%%%%%%%%%%%%%%%%
\begin{no} Let $G = H \times_\beta \RR^m$ be a non-abelian substitution group. By \S \ref{A1A2OK} we find a dilation group $(G,d, (D_{\lambda})_{\lambda > 0})$ and a lattice $\Gamma < G$ satisfying conditions (A1) and (A2), and by Construction \ref{Con-V0Explicit} we may assume that $\Gamma_0 < \RR^m$ admits a bounded fundamental domain $V_0 \subset \RR^m$ which contains a $0$-neighbourhood and satisfies (A4). Since $\Gamma_H \subset H$ is a uniform lattice, there exist an open identity neighbourhood $U \subset H$ and a compact set $V \subset H$ containing $U$, such that the multiplication map $m: \Gamma_H \times H \to H$ is injective on $\Gamma \times U$ and surjective on $\Gamma \times V$. Since $m$ has countable fibers, this implies that there exists a Borel set $V_H$ with $U \subset V_H \subset V$ such that  $m$ restricts to a bijection $\Gamma \times V_H \to H$, i.e.\ $V_H$ is a bounded identity neighbourhood and a fundamental domain for $\Gamma_H$ in $H$. We now define $V := V_H \times_{\beta} V_0$. Then $V$ is bounded, contains an open identity neighbourhood in $G$, and it is a fundamental domain for $\Gamma$ by the following general principle. \end{no}
%%%%%%%%%%%%%%%%%%%%%%%%%%

\begin{lemma}[Fundamental boxes]
\label{Lem-FundCell} 
If $V_H \subset H$ and $V_0 \subset \RR^m$ are fundamental domains for $\Gamma_H$ and $\ZZ^m$ respectively, then $V := V_H \times_\beta V_0 \subset G$ is a fundamental domain for $\Gamma$.
\end{lemma}
\begin{proof} Let $g = (g_H, x) \in G$. We choose $\gamma_H \in \Gamma_H$ such that $v_H := \gamma_Hg_H \in V_H$. Then there exists $y \in \RR^m$ such that $(\gamma_H,0)(g_H, x) = (v_H, y)$, and we choose $\gamma_V \in \Gamma_0$ such that  $y+ \gamma_V \in V_0$. Now set $\gamma := (\gamma_H, \gamma_V) = (e, \gamma_V)(\gamma_H, 0)$ so that
\[
\gamma g = (e, \gamma_V)(\gamma_H, 0)(g_H, x)=(e, \gamma_V)(v_H, y) = (v_H, y+\gamma_V) \in V_H \times V_0.
\]
Conversely, if $v = (v_H, v_0) \in V$ and $\gamma = (\gamma_H, \gamma_V)$ in $\Gamma$ with $\gamma v = (\gamma_H v_H, \gamma_V +v_0 + \beta(\gamma_H, v_H)) \in V$, i.e. $\gamma_H v_H \in V_H$ and $ \gamma_V +v_0 + \beta(\gamma_H, v_H) \in V_0$, then $\{v_H, \gamma_H v_H\} \subset V_H$, and since $V_H$ is a fundamental domain for $\Gamma_H$ we obtain $\gamma_H = e$. This in turn implies by Lemma~\ref{Lem-NiceCocycle} that $\beta(\gamma_H, v_H) = 0$ and hence $\{v_0, \gamma_V+v_0\} \in V_0$. Since $V_0$ is a fundamental domain for $\Gamma_0$ we deduce that $\gamma_V = 0$. Thus $\gamma$ is trivial, and hence $V$ is a fundamental domain.
\end{proof}
\begin{proof}[Proof of Theorem \ref{Thm-LieMain}] Combine Lemma \ref{Lem-VolGrowthGamma}, Proposition \ref{Prop-JustLatticeAndDilation}, Remark \ref{Rem-AbelianLatticeDilation}, Construction \ref{Con-V0Explicit} and Lemma \ref{Lem-FundCell}.
\end{proof}

\appendix

\section{On the classification of low-dimensional positively gradable $\RR$-Lie algebras}
\label{AppendixCensus}

%%%%%%%%%%%%%%%%%%%%%%%%%%
\begin{no} 
We have seen that substitution groups are precisely those $1$-connected Lie groups whose Lie algebra is positively gradable and definable over $\QQ$. The purpose of this appendix is to point out that, at least in low dimensions, such Lie algebras exist in abundance. By Theorem
\ref{Thm-Cornulier1} a real Lie algebra is positively gradable iff the complexification, which is a complex nilpotent Lie algebra, is positively gradable. This motivates a closer study of gradings on complex (nilpotent) Lie algebras.
\end{no}
%%%%%%%%%%%%%%%%%%%%%%%%%%

From now on until Corollary~\ref{Cor-ComplexClassification}, $\L g$ denotes a complex Lie algebra; for the moment we do not assume that $\L g$ is nilpotent. We study gradings of $\L g$ over \emph{finitely-generated torsion-free abelian} (fgtfa) groups.

\begin{construction} A Lie subalgebra $\L t < \mathrm{Der}(\L g)$ is called \emph{toral} if it is abelian and every $T \in \L t$ acts on $\L g$ by a diagonalizable endomorphism. Equivalently, $\L t \subset \mathrm{End}(\L g)$ is a simultaneously diagonalizable subalgebra. The term ``toral'' alludes to the fact that these subalgebras are precisely the Lie algebras of linear algebraic subgroups $T < \mathrm{Aut}(\L g)$ which are isomorphic (as linear algebraic groups over $\CC$) to some \emph{split torus} $(\CC^\times)^r$ for some $r \geq 0$. If $\L t < \mathrm{Der}(\L g)$ is toral and $\alpha \in \L t^*$ we define 
\[
\L g_\alpha(\L t) := \{X \in \L g \mid \forall\, T \in \L t: T(X) = \alpha(T) X\} \qand \mathcal W_{\L t} := \{\beta \in \L t^* \mid \L g_\beta \neq \{0\} \}.
\]
The elements of $\mathcal W_{\L t}$ are called the \emph{weights} of $\L t$ and $\L g_\alpha(\L t)$ is called the \emph{weight space} of $\alpha$. The weights generate a fgtfa group $A_{\L t} := \langle \mathcal W_{\L t}  \rangle < \L t^*$, and since 
$\L t \subset \mathrm{End}(\L g)$ is simultaneously diagonalizable we obtain an $A_{\L t}$-grading $\L t$ of $\L g$ by
\[
\gamma_{\L t} := (\L g_{\alpha}(\L t))_{\alpha \in A_{\L t}}.
\]
\end{construction}

\begin{lemma}
\label{Lem-GradingsAreToral} 
Every grading of $\L g$ over a fgtfa group $A$ is strictly equivalent to a grading of the form $\gamma_{\L t}$ for an toral subalgebra $\L t \subset \mathrm{Aut}(\L g)$. Moreover:
\begin{enumerate}[(i)]
\item If $\L t$ is toral and $\L t' = \mathrm{Ad}(u)(\L t)$ for some $u \in \mathrm{Aut}(\L g)$, then $\gamma_{\L t}$ and $\gamma_{\L t'}$ are equivalent.
\item If $\L t' \subset \L t$, then there exist a morphism $f: A_{\L t} \to A_{\L t'}$ such that $\gamma_{\L t'} = f(\gamma_{\L t})$.
\end{enumerate}
\end{lemma}

\begin{proof} We may assume that $A = \ZZ^r$ for some $r \geq 0$ and denote by $f_1, \dots, f_r: A \to \ZZ$ the coordinate projections. Then each $\gamma_j = (f_j)_*(\gamma)$ is a $\ZZ$-grading of $\L g$ and hence defines an associated derivation $\L d_i$. Since $\L d_1$, \dots, $\L d_r$ are diagonalizable and preserve each other's eigenspaces, they commmute and thus generate a toral subalgebra
 $\L t := \langle \L d_1, \dots, \L d_r \rangle < \mathrm{Der}(\L g)$ such that $\gamma$ and $\gamma_{\L t}$ are strictly equivalent.
 
 \item (i) One checks that if  $\L t' = \mathrm{Ad}(u)(\L t)$, then $\gamma_{\L t'} = u(\gamma_{\L t})$.
 \item (ii) The restriction $\L t^* \to (\L t')^*$ induces the desired map $f$.
\end{proof}

\begin{definition} A grading $\gamma$ of $\L g$ over a fgtfa group $A$ is called a \emph{maximal grading} if the following hold:
\begin{itemize}
\item $\mathcal W_\gamma$ generates $A$ as a group.
\item If $\gamma'$ is another grading of $\L g$ over another fgtfa group $B$, then $\gamma' = u(f_*(\gamma))$ for some $f: A \to B$ and $u \in \mathrm{Aut}(\L g)$.
\end{itemize}
\end{definition}

%%%%%%%%%%%%%%%%%%%%%%%%%%
\begin{no} It is a classical fact in a linear algebraic group over $\CC$ every torus is contained in a maximal torus and that any two maximal tori are conjugate \cite[Thm.\ 6.4.1]{Springer}. In particular, if $\L t_o < \mathrm{Der}(\L g)$ is a maximal toral subalgebra, then for any other toral subalgebra $\L t < \mathrm{Der}(\L g)$, then there exists $u \in \mathrm{Aut}(\L g)$ so that $\L t \subset \mathrm{Ad}(u)(\L t_o)$. In view of Lemma~\ref{Lem-GradingsAreToral} this implies that $\gamma_{\L t_o}$ is a maximal grading. On the other hand, it is immediate from the definition that any two maximal gradings are equivalent. To summarize:
\end{no}
%%%%%%%%%%%%%%%%%%%%%%%%%%

\begin{theorem}
\label{Thm-ExMaxGrad}
Every complex Lie algebra admits a maximal grading, which is unique up to equivalence of gradings.
\end{theorem}

%%%%%%%%%%%%%%%%%%%%%%%%%%
\begin{no} 
If $\L g$ is a complex Lie algebra with maximal grading $\gamma = (\L g_\alpha)_{\alpha \in A}$, then $A$ is a lattice in $A_{\RR} := A \otimes_\ZZ \RR$ and the latter is a finite-dimensional vector spaces. By Theorem~\ref{Thm-ExMaxGrad}, the rank of $A$ (or, equivalently, the dimension of $A_{\RR}$) depends only on $\L g$ (but not on the choice of maximal grading); it is called the \emph{toral rank} of $\L g$.
\end{no}
%%%%%%%%%%%%%%%%%%%%%%%%%%

\begin{example} If $\L g$ is a complex semisimple Lie algebra, then every derivation of $\L g$ is inner \cite[Thm.~5.3]{Humphreys}. Since $\L g \cong \mathrm{ad}(\L g) \cong \mathrm{Der}(\L g)$, gradings of $\L g$ are parametrized by toral subalgebras of $\L g$, and maximal gradings are induced by maximal toral subalgebras of $\L g$, which are precisely the Cartan subalgeba (see  \cite{Humphreys}). In the terminology of \cite{Humphreys} this means that the toral rank of $\L g$ is the rank of $\L g$ and that maximal gradings are precisely the \emph{root gradings} induced by Cartan subalgebras.

However, if $\L g$ is nilpotent, then any Cartan subalgebra of $\L g$ is equal to $\L g$, hence does not provide a non-trivial grading. In this case, a possible replacement for Cartan subalgebras of $\L g$ is given by maximal torus subalgebras of $\mathrm{Aut}(\L g)$; in many cases (i.e.\ if the toral rank is non-zero) this provides non-trivial gradings on $\L g$.
 
To summarize, maximal gradings are generalizations of roots gradings of semisimple Lie algebras which also provide some insight in the nilpotent case. In particular, they can be used to completely characterize positive gradability:
\end{example}

\begin{proposition}[Criterion for positive gradability]
\label{Prop-PosConv}
A complex Lie algebra $\L g$ is positively gradable if and only if for some (hence any) maximal grading $\gamma = (\L g_\alpha)_{\alpha \in A}$ of $\L g$ we have
\[0 \not \in \mathrm{conv}(\mathcal W_\gamma) \subset A_{\RR}.\]
\end{proposition}

\begin{proof} If $0 \in \mathrm{conv}(\mathcal W_\gamma) \subset A_{\RR}$ for a maximal grading $\gamma$, then $0 \in \mathrm{conv}(\mathcal W_{\gamma'}) \subset {\RR}$ for any $\ZZ$-grading $\gamma'$ of $\L g$, since the latter is strictly equivalent to $f_*(\gamma)$ for some $f: A \to \ZZ$. This implies that any $\ZZ$-grading $\gamma'$ of $\L g$ must have both non-negative and non-positive weights. In particular, $\L g$ does not admit an $\NN$-grading.

\item Conversely, if $0 \not \in \mathrm{conv}(\mathcal W_\gamma) \subset A_{\RR}$, then by basic convex geometry there is a linear form $f: A_{\RR} \to \RR$ such that
$f(\mathcal W_\gamma)  \subset \RR_{>0}$, and hence $f_*g$ is an $\RR_{>0}$-grading on $\L g$.
\end{proof}

%%%%%%%%%%%%%%%%%%%%%%%%%%
\begin{no} We now apply Proposition \ref{Prop-PosConv} to the case of low-dimensional nilpotent complex Lie algebras. All nilpotent Lie algebras of dimension $\leq 7$ have been classified, using different sets of invariants. The most cited list is that of Gong \cite{Gong98}, correcting earlier lists of Seeley, Ancochea-Goze and Romdhani, which lists Lie algebras by upper central series dimensions. For our purposes the most useful list is the list compiled by Carles \cite{Carles1, Carles2} with later corrections by Magnin \cite{Magnin} which lists the Lie algebras together with their maximal grading, sorted by toral rank. Using this list one can directly check the condition from Proposition~\ref{Prop-PosConv} in each case. The result is as follows:
\end{no}
%%%%%%%%%%%%%%%%%%%%%%%%%%

\begin{corollary}[Classification of positively gradable complex Lie algebras in dimension $\leq 7$]
\label{Cor-ComplexClassification}
Let $\L g$ be an indecomposable complex nilpotent Lie algebra of dimension $\leq 7$.
\begin{enumerate}[(i)]
\item If $\dim_{\CC} \L g \leq 6$ then $\L g$ is positively gradable. (Consequently, every decomposable Lie algebra of dimension $7$ is also positively gradable.)
\item If $\dim_{\CC} \L g = 7$ and $\L g$ has toral rank $\geq 2$, then $\L g$ is positively gradable.
\item If $\dim_{\CC} \L g = 7$ and $\L g$ has toral rank $1$, then $\L g$ is positively gradable unless is isomorphic to one of the four Lie algebras $\L n_{7,23}$, $\L n_{7,26}$, $\L n_{7,43}$ or $\L n_{7,47}$ from Carles' list \cite[p.10]{Carles1}, which correspond to the Lie algebras 12357B, 12457B, 12457K and 13457G in Gong's list. 
\item If $\dim_{\CC} \L g = 7$ and $\L g$ has toral rank $0$, then $\L g$ is not positively gradable. These Lie algebras are listed in \cite[pp.~7-9]{Carles1} and correspond to the Lie algebras 123457E, 123457F, 123457H, 12457J, 13457I and 12457I and the infinite family 12457N in Gong's list.
\end{enumerate}
\end{corollary}

\begin{remark}
We emphasize that in general there is no reason for a nilpotent complex Lie algebra of toral rank $\geq 2$ to be positively gradable; this is just an accident in dimension $7$.
\end{remark}

%%%%%%%%%%%%%%%%%%%%%%%%%%
\begin{no}
\label{Par-GongReal}
We now return to the real case; thus from now on $\L g$ denotes a real Lie algebra. Conveniently, Gong \cite{Gong98} also provides a list of all $7$-dimensional real Lie algebras up to isomorphism. As far as the complex Lie algebras listed in (iii) and (iv) of Corollary~\ref{Cor-ComplexClassification} are concerned,
\begin{itemize}
\item each Lie algebra in the infinite family $12457N$ has two isomorphism classes of real forms (denoted 12457N and 12457N${}_2$), except the one with parameter $\lambda =1$ which has a third isomorphism class of real forms (denoted 12457N${}_1$);
\item each of the Lie algebras 12457J, 12357B and 123457H has two isomorphism classes of real forms;
\item the remaining Lie algebras 12457B, 12457I, 12457K, 13457G, 13457I, 123457E and 123457F  have a single isomorphism class of real forms.
\end{itemize}
In view of Theorem \ref{Thm-Cornulier1} this establishes the following result.
\end{no}
%%%%%%%%%%%%%%%%%%%%%%%%%%

\begin{corollary}[Classification of positively gradable real Lie algebras in dimension $\leq 7$]
\label{Cor-ClassificationReal}\quad\quad
Let $\L g$ be a real nilpotent Lie algebra of dimension $\leq 7$.
\begin{enumerate}[(i)]
\item If $\dim \L g \leq 6$ or $\L g$ is decomposable, then $\L g$ is positively gradable.
\item Exactly $126$ of the $140$ isolated indecomposable real nilpotent Lie algebras of dimension $7$ in Gong's list are positively gradable.
\item Exactly $7$ of the $9$ infinite families of indecomposable  real nilpotent Lie algebras of dimension $7$ in Gong's list are positively gradable.
\end{enumerate}
Moreover,  the indecomposable real nilpotent Lie algebras of dimension $7$, which are not positively gradable, are precisely those listed in \S \ref{Par-GongReal}.
\end{corollary}

%%%%%%%%%%%%%%%%%%%%%%%%%%
\begin{no} 
All of the $140$ isolated indecomposable real nilpotent Lie algebras of dimension $7$ in Gong's list are defined over $\QQ$; in fact, the bases given in \cite{Gong98} already have rational structure constants. This yields 126 pairwise non-isomorphic examples of rationally $\NN$-gradable groups. As for the infinite families, these come with a real parameter $\lambda$, and at least for rational choices of $\lambda \in \QQ$ we obtain rational structure constants. We thus obtain $7$ infinite families of pairwise non-isomorphic examples of rationally $\NN$-graded groups. 
\end{no}
%%%%%%%%%%%%%%%%%%%%%%%%%%

%%%%%%%%%%%%%%%%%%%%%%%%%%%%%%%%%%%%%%%%%%%%%%%%%%%%%
%%%%%%%%%%           References            %%%%%%%%%%
%%%%%%%%%%%%%%%%%%%%%%%%%%%%%%%%%%%%%%%%%%%%%%%%%%%%%
\bibliographystyle{amsalpha}

\newcommand{\etalchar}[1]{$^{#1}$}
\def\cprime{$'$} \def\cprime{$'$} \def\cprime{$'$}
\providecommand{\bysame}{\leavevmode\hbox to3em{\hrulefill}\thinspace}
\providecommand{\MR}{\relax\ifhmode\unskip\space\fi MR }
% \MRhref is called by the amsart/book/proc definition of \MR.
\providecommand{\MRhref}[2]{%
  \href{http://www.ams.org/mathscinet-getitem?mr=#1}{#2}
}
\providecommand{\href}[2]{#2}

\end{document}